\documentclass{amsart}
\usepackage{amscd,thmdefs,amsmath,amssymb,latexsym, amsfonts,txfonts} %pxfonts}, wasysym}
\usepackage{textcomp}
\usepackage{color}
\usepackage{hyperref}
\usepackage{tikz,pgfplots,pgf} 
\usepackage{enumerate}
\usepackage[all]{xy}
\tikzset{node distance=2cm, auto}
\usetikzlibrary{calc} 
\usetikzlibrary{trees} 
%\pgfplotsset{compat=1.8}

\setlength{\textwidth}{16cm}
\setlength{\oddsidemargin}{0.25cm}
\setlength{\evensidemargin}{0.25cm}

\def\N{\mathbb{N}}
\def\R{\mathbb{R}}

\def\K{\mathbb{K}}

\def\F{\mathcal{F}}

\def\L{\mathcal{L}}

\def\lin{\mathrm{lin}}
\def\Lip{\mathrm{Lip}}
\def\Lipo{\mathrm{Lip}_0}
\def\LipoF{\mathrm{Lip}_{0F}}

\def\Lipa{\mathrm{Lip}_\alpha}
\def\Lipp{\mathrm{Lip}_\pi}

\def\Lipdp{\mathrm{Lip}_{d_p}}
\def\Lipdu{\mathrm{Lip}_{d_1}}
\def\Lipdi{\mathrm{Lip}_{d_\infty}}
\def\Lipwp{\mathrm{Lip}_{w_p}}

\def\rank{\mathrm{rank}}
\def\codim{\mathrm{codim}}

\def\fin{\mathrm{FIN}}
\def\cofin{\mathrm{COFIN}}
\def\mfin{\mathrm{MFIN}}

\newcommand{\pair}[2]{{\langle #1, #2 \rangle}}
\newcommand{\n}[1]{ \left\|#1\right\| }

\begin{document}
\title[Duality for ideals of Lipschitz maps]{Duality for ideals of Lipschitz maps}

\author{M. G. Cabrera-Padilla}
\address{Departamento de Matem\'{a}ticas, Universidad de Almer\'{i}a, 04120 Almer\'{i}a, Spain}
\email{m\_gador@hotmail.com}

\author{J. A. Ch\'avez-Dom\'{\i}nguez}
\address{Department of Mathematics,
University of Texas at Austin,
Austin, TX 78712-1202,
and
Instituto de Ciencias Matem\'aticas,
CSIC-UAM-UC3M-UCM,
 28049, Madrid, Spain.
}
\email{jachavezd@math.utexas.edu}

\thanks{The second author's research was partially supported by NSF grant DMS-1400588 and ICMAT Severo Ochoa Grant SEV-2011-0087 (Spain), and the third author's one by Junta of Andaluc\'{i}a grant FQM-194 and Ministerio de Econom\'{i}a y Competitividad project MTM2014-58984-P}

\author{A. Jim\'{e}nez-Vargas}
\address{Departamento de Matem\'{a}ticas, Universidad de Almer\'{i}a, 04120 Almer\'{i}a, Spain}
\email{ajimenez@ual.es}
\date{\today}

\author{M.  Villegas-Vallecillos}
\address{Departamento de Matem\'{a}ticas, Universidad de C\'{a}diz, 11510 Puerto Real, Spain}
\email{moises.villegas@uca.es}

\subjclass[2010]{26A16, 46B28, 46E15, 47L20}
%26A16 Lipschitz (H\"{o}lder) classes
%46B28 Spaces of operators, tensor products, approximation properties
%46E15 Banach spaces of continuous, differentiable or analytic functions
%47L20 Operator ideals

\keywords{Lipschitz map, tensor product, $p$-summing operator, duality}

\begin{abstract}
We develop a systematic approach to the study of duality for ideals of Lipschitz maps from a metric space to a Banach space, inspired by the classical theory that relates ideals of operators and tensor norms for Banach spaces, by using the Lipschitz tensor products previously introduced by the same authors.
We first study spaces of Lipschitz maps, from a metric space to a dual Banach space, that can be represented canonically as the dual of a Lipschitz tensor product endowed with a Lipschitz cross-norm.
We show that several known examples of ideals of Lipschitz maps (Lipschitz maps, Lipschitz $p$-summing maps and maps admitting a Lipschitz factorization through a subset of an $L_p$ space) admit such a representation,
and more generally we characterize when a space of Lipschitz maps from a metric space to a dual Banach space is in canonical duality with a Lipschitz cross-norm.
Furthermore, we give conditions on the Lipschitz cross-norm that are almost equivalent to the space of Lipschitz maps having an ideal property.
We introduce a concept of operators which are approximable with respect to one of these ideals of Lipschitz maps, and identify them in terms of tensor-product notions.
Finally, we also prove a Lipschitz version of the representation theorem for maximal operator ideals. This allows us to relate Lipschitz cross-norms to ideals of Lipschitz maps taking values in general Banach spaces, and not just dual spaces.
\end{abstract}
\maketitle

\section*{Introduction}

The study of ideals of linear operators between Banach spaces, that is, families of operators that are closed under composition, has been an important tool in the study of Banach spaces. A stellar example is that of $p$-summing operators, as attested to by the astonishing number of results and applications that can be found, for example, in  \cite{Diestel-Jarchow-Tonge}.
In recent years, a number of ideals of Lipschitz maps (which in particular, are generally nonlinear) inspired by well-known and very useful ideals of linear operators between Banach spaces have appeared in the literature.
One example is the notion of Lipschitz $p$-summing operators between metric spaces, a nonlinear generalization of $p$-summing operators, which was introduced by J. D. Farmer and W. B. Johnson in \cite{fj}.
Other examples of such ideals of Lipschitz maps are operators that admit a Lipschitz factorization through an $L_p$ space \cite{Johnson-Maurey-Schechtman}, Lipschitz $p$-nuclear and Lipschitz $p$-integral operators \cite{cz}, or operators admitting a Lipschitz factorization through a subset of a Hilbert space \cite{CD-Lipschitz-factorization}.
If we restrict our attention to one of these ideals of maps from a fixed metric space to a fixed normed space, the resulting space of maps is itself a normed space.
Therefore, being able to identify the dual of such a space of maps would be interesting and useful.
That is precisely one of the questions raised by Farmer and Johnson at the end of their paper for the specific case of Lipschitz $p$-summing maps \cite{fj}.

Our purpose in this paper is to develop a systematic approach to the duality theory for ideals of Lipschitz maps from a metric space to a Banach space, a generalization of the aforementioned question of Farmer and Johnson. 
This approach is inspired on one hand by the deep and useful connections between the theories of operator ideals and of tensor norms for Banach spaces (in the spirit of the book of A. Defant and K. Floret \cite{Defant-Floret}), and on the other by the solution by the second-named author to the problem of duality for Lipschitz $p$-summing operators \cite{cd11}.
The key idea in \cite{cd11} is the concept of spaces of Banach-space-valued molecules, which plays the role of a sort of ``tensor product'' between a metric space and a Banach space. In \cite{cd11} those spaces of molecules are endowed with certain Lipschitz versions of the tensor norms of S. Chevet \cite{c} and P. Saphar \cite{s}, which are in canonical duality with spaces of  Lipschitz $p$-summing maps. It is worth noting that the use of tensor-product techniques in the research on the duality theory for Lipschitz function spaces implicitly appears already in the works \cite{j70,j71} by J. A. Johnson in the 70's.
In  \cite{ccjv}, we have formalized the notion of a Lipschitz tensor product between a metric space and a normed space, and studied its basic properties.
In the present paper we develop the duality theory that relates Lipschitz tensor products and ideals of Lipschitz maps,
by answering the following two questions:
Given an ideal of Lipschitz maps, when can it be canonically identified with the dual of a Lipschitz tensor product?
Given a Lipschitz tensor product, when can its dual space be canonically identified with an ideal of Lipschitz maps?

%Let $X$ be a metric space with a base point denoted by $0$ and let $E$ be a Banach space with dual Banach space $E^*$. We denote by $\Lipo(X,E^*)$ the Banach space of all Lipschitz maps $f$ from $X$ into $E^*$ for which $f(0)=0$ under the Lipschitz norm, defined by
%$$
%\Lip(f)=\sup\left\{\frac{\left\|f(x)-f(y)\right\|}{d(x,y)}\colon x,y\in X, \; x\neq y\right\}.
%$$

Let us now describe the contents of this paper.
Section \ref{s1} gathers some preliminary results on the Lipschitz tensor product $X\boxtimes E$
between a metric space $X$ and a normed space $E$, proved in the paper \cite{ccjv}.
In Section \ref{alo} we introduce and study the space $\Lipa(X,E^*)$ of $E^*$-valued $\alpha$-Lipschitz operators defined on $X$, that is, operators from $X$ to $E^*$ that induce a continuous functional on a given Lipschitz tensor product with a cross-norm norm $\alpha$ (denoted by $X \boxtimes_\alpha E$).
The $\alpha$-Lipschitz operators are in fact Lipschitz maps, which justifies the terminology.
Moreover, we show that several known examples of ideals of Lipschitz maps
--- namely Lipschitz maps, Lipschitz $p$-summing maps and maps admitting a Lipschitz factorization through a subset of an $L_p$ space --- are associated to Lipschitz cross-norms in this way.

Section \ref{alo2} addresses the duality theory for $\alpha$-Lipschitz operators and contains the main result of this paper: the space of $E^*$-valued $\alpha$-Lipschitz operators defined on $X$ is canonically isometrically isomorphic to the dual of the Lipschitz tensor product $X \boxtimes_\alpha E$.
This canonical identification is the basis of our study of the duality for ideals of Lipschitz maps. 
The section is completed by studying the several topologies on the space of $E^*$-valued $\alpha$-Lipschitz operators defined on $X$.
Thus, the main questions we are pursuing in this paper can be rephrased as:
When is the space of $\alpha$-Lipschitz operators an ideal?
Given an ideal of Lipschitz maps, when can it be represented as a space of $\alpha$-Lipschitz maps? 

In Section \ref{frlo} we take a small detour from the main theme of the paper to work out several results dealing with approximations.
We show that under minimal assumptions on the cross-norm $\alpha$, the simplest (that is,  the so-called Lipschitz finite-rank) Lipschitz operators from $X$ to $E^*$ are all $\alpha$-Lipschitz. 
We also study the Lipschitz operators from $X$ into $E^*$ that are limits in the $\alpha$-Lipschitz norm of sequences of Lipschitz finite-rank operators, which are called $\alpha$-Lipschitz approximable operators. 

In Section \ref{lobi} we formalize the notion of ideals of Lipschitz maps, which we have called Banach ideals of Lipschitz operators. We introduce these ideals and give some sufficient conditions and other necessary ones on $\Lipa(X,E^*)$ and the space of $\alpha$-Lipschitz approximable operators to be such a Banach ideal of Lipschitz operators.

In Section \ref{lobs}, we look at spaces of maps from $X$ to $E^*$ which are not necessarily ideals but nevertheless are in duality with Lipschitz cross-norms. We introduce the concept of Lipschitz operator Banach space, and give simple conditions on $\alpha$ that characterize when $\Lipa(X,E^*)$ is one such space.
As was already mentioned, it is proved in Section \ref{alo} that if $\alpha$ is a Lipschitz cross-norm on $X\boxtimes E$, then $\Lipa(X,E^*)$ can be identified with the dual of the space $X\boxtimes_\alpha E$.
We now prove a converse result,
characterizing those Lipschitz operator Banach spaces that are canonically isometrically isomorphic to the dual of $X\boxtimes_\alpha E$ for some Lipschitz cross-norm $\alpha$ on $X\boxtimes E$ (in terms of the compactness of their unit balls with respect to one of the topologies introduced in Section \ref{alo2}).

Up to this point in the paper, we have only dealt with spaces of maps from a metric space to a dual Banach space.
In Section \ref{rtmlobi}, we study the relationship between Lipschitz cross-norms and Lipschitz operator Banach ideals even in the case when the latter take values in a Banach space which is not a dual space.
The main result of this section is a Lipschitz version of the representation theorem for maximal operator ideals \cite[17.5]{Defant-Floret}, and we deduce some consequences of it including a general theorem that characterizes certain nonlinear maps by linear means (Theorem \ref{thm-linear-characterization-of-nonlinear}).

\section{Notation and preliminary results}\label{s1}

Given two metric spaces $(X,d_X)$ and $(Y,d_Y)$, let us recall that a map $f\colon X\to Y$ is said to be \emph{Lipschitz} if there exists a real constant $C\geq 0$ such that $d_Y(f(x),f(y))\leq Cd_X(x,y)$ for all $x,y\in X$. The least constant $C$ for which the preceding inequality holds will be denoted by $\Lip(f)$, that is, 
$$
\Lip(f)=\sup\left\{\frac{d_Y(f(x),f(y))}{d_X(x,y)}\colon x,y\in X, \; x\neq y\right\}.
$$ 
A pointed metric space $X$ is a metric space with a base point in $X$ , that is, a designated special point, which we will always denote by $0$. As usual, $\K$ denotes the field of real or complex numbers. We will consider a normed space $E$ over $\K$ as a pointed metric space with the distance defined by its norm and the zero vector as the base point. As is customary, $B_E$ and $S_E$ stand for the closed unit ball of $E$ and the unit sphere of $E$, respectively. 

Given two pointed metric spaces $X$ and $Y$, we denote by $\Lipo(X,Y)$ the set of all base-point preserving Lipschitz maps from $X$ to $Y$. If $E$ is a Banach space, then $\Lipo(X,E)$ is a Banach space under the Lipschitz norm given by 
$$
\Lip(f)=\sup\left\{\frac{\left\|f(x)-f(y)\right\|}{d(x,y)}\colon x,y\in X, \; x\neq y\right\}.
$$ 
%for all $f\in\Lipo(X,E)$. 
The elements of $\Lipo(X,E)$ are known as \emph{Lipschitz operators}. The space $\Lipo(X,\K)$ is called the \emph{Lipschitz dual of} $X$ and it will be denoted by $X^\#$. 

For two vector spaces $E$ and $F$, $L(E,F)$ stands for the vector space of all linear operators from $E$ into $F$. 
% with the linear operations defined pointwise. 
In the case that $E$ and $F$ are Banach spaces, $\L(E,F)$ represents the Banach space of all bounded linear operators from $E$ to $F$ endowed with the canonical norm of operators. In particular, the algebraic dual $L(E,\K)$ and the topological dual $\L(E,\K)$ are denoted by $E'$ and $E^*$, respectively. For each $e\in E$ and $e^*\in E'$, we frequently will write $\langle e^*,e\rangle$ instead of $e^*(e)$. 

Throughout this paper, unless otherwise stated, $X$ will denote a pointed metric space with base point $0$ and $E$ a Banach space.

We now some concepts and facts whose proofs can be found in \cite{ccjv}. Let $X$ be a pointed metric space and let $E$ be a Banach space. The \emph{Lipschitz tensor product} $X\boxtimes E$ is the linear span of all linear functionals $\delta_{(x,y)}\boxtimes e$ on $\Lipo(X,E^*)$ of the form 
$$
\left(\delta_{(x,y)}\boxtimes e\right)(f)=\left\langle f(x)-f(y),e\right\rangle% , \qquad \forall f\in\Lipo(X,E^*)
$$
for $(x,y)\in X^2$ and $e\in E$. A norm $\alpha$ on $X\boxtimes E$ is a \emph{Lipschitz cross-norm} if 
$$
\alpha\left(\delta_{(x,y)}\boxtimes e\right)=d(x,y)\left\|e\right\|
$$
for all $(x,y)\in X^2$ and $e\in E$. We denote by $X\boxtimes_\alpha E$ the linear space $X\boxtimes E$ with norm $\alpha$, and by $X\widehat{\boxtimes}_\alpha E$ the completion of $X\boxtimes_\alpha E$. A Lipschitz cross-norm $\alpha$ on $X\boxtimes E$ is called \emph{dualizable} if given $g\in X^\#$ and $\phi\in E^*$, we have
$$
%\left|(g\boxtimes\phi)\left(\sum_{i=1}^n \delta_{(x_i,y_i)}\boxtimes e_i\right)\right|=
\left|\sum_{i=1}^n\left(g(x_i)-g(y_i)\right)\left\langle \phi,e_i\right\rangle\right|
\leq\Lip(g)\left\|\phi\right\|\alpha\left(\sum_{i=1}^n \delta_{(x_i,y_i)}\boxtimes e_i\right)
$$
for all $\sum_{i=1}^n \delta_{(x_i,y_i)}\boxtimes e_i\in X\boxtimes E$, and it is called \emph{uniform} if given $h\in\Lipo(X,X)$ and $T\in\L(E,E)$, we have
$$
\alpha\left(\sum_{i=1}^n\delta_{(h(x_i),h(y_i))}\boxtimes T(e_i)\right)
\leq\Lip(h)\left\|T\right\|\alpha\left(\sum_{i=1}^n \delta_{(x_i,y_i)}\boxtimes e_i\right)
$$ 
for all $\sum_{i=1}^n \delta_{(x_i,y_i)}\boxtimes e_i\in X\boxtimes E$.

For each $\sum_{i=1}^n \delta_{(x_i,y_i)}\boxtimes e_i\in X\boxtimes E$, the \emph{Lipschitz injective norm} on $X\boxtimes E$ is defined by 
$$
\varepsilon\left(\sum_{i=1}^n \delta_{(x_i,y_i)}\boxtimes e_i\right)=\sup\left\{\left|\sum_{i=1}^n\left(g(x_i)-g(y_i)\right)\left\langle\phi,e_i\right\rangle\right|\colon g\in B_{X^\#}, \; \phi\in B_{E^*}\right\}.
$$
For each $u\in X\boxtimes E$, the \emph{Lipschitz projective norm} $\pi$ and the \emph{Lipschitz $p$-nuclear} norm $d_p$ for $1\leq p\leq\infty$ are defined on $X\boxtimes E$ as 
\begin{align*}
\pi(u)&=\inf\left\{\sum_{i=1}^n d(x_i,y_i)\left\|e_i\right\|\colon u=\sum_{i=1}^n\delta_{(x_i,y_i)}\boxtimes e_i\right\},\\
%d_1(u)&=\inf\left\{\left(\sup_{g\in B_{X^\#}}\left(\max_{1\leq i\leq n}\left|g(x_i)-g(y_i)\right|\right)\right)\left(\sum_{i=1}^n\left\|e_i\right\|\right)\colon u=\sum_{i=1}^n \delta_{(x_i,y_i)}\boxtimes e_i\right\},\\
d_p(u)&=\inf\left\{\left(\sup_{g\in B_{X^\#}}\left(\sum_{i=1}^n\left|g(x_i)-g(y_i)\right|^p\right)^{\frac{1}{p'}}\right)\left(\sum_{i=1}^n\left\|e_i\right\|^p\right)^{\frac{1}{p}}\colon u=\sum_{i=1}^n \delta_{(x_i,y_i)}\boxtimes e_i\right\},%\quad (1<p<\infty),\\
%d_\infty(u)&=\inf\left\{\left(\sup_{g\in B_{X^\#}}\left(\sum_{i=1}^n\left|g(x_i)-g(y_i)\right|\right)\right)\left(\max_{1\leq i \leq n}\left\|e_i\right\|\right)\colon u=\sum_{i=1}^n \delta_{(x_i,y_i)}\boxtimes e_i\right\},
\end{align*}
where the infimum is taken over all such representations of $u$. It is known that $\varepsilon$, $\pi$ and $d_p$ for $p\in [1,\infty]$ are uniform and dualizable Lipschitz cross-norms on $X\boxtimes E$ and $d_1=\pi$. Moreover, $\varepsilon$ is the least dualizable Lipschitz cross-norm on $X\boxtimes E$ and $\pi$ is the greatest Lipschitz cross-norm on $X\boxtimes E$. In fact, a norm $\alpha$ on $X\boxtimes E$ is a dualizable Lipschitz cross-norm if and only if $\varepsilon\leq\alpha\leq\pi$. 

If $g\in X^\#$ and $\phi\in E^*$, we can consider the linear functional $g\boxtimes\phi$ on $X\boxtimes E$ defined by 
$$
(g\boxtimes\phi)\left(\sum_{i=1}^n \delta_{(x_i,y_i)}\boxtimes e_i\right)=\sum_{i=1}^n\left(g(x_i)-g(y_i)\right)\left\langle \phi,e_i\right\rangle .
$$
The \emph{associated Lipschitz tensor product} of $X\boxtimes E$, denoted by $X^\#\boxast E^*$, is the linear span of all linear functionals $g\boxtimes\phi$ on $X\boxtimes E$ for $g\in X^\#$ and $\phi\in E^*$. A norm $\beta$ on $X^\#\boxast E^*$ is called a \emph{Lipschitz cross-norm} if 
$$
\beta(g\boxtimes\phi)=\Lip(g)\left\|\phi\right\|
$$
for all $g\in X^\#$ and $\phi\in E^*$. Denote by $X^\#\boxast_\beta E^*$ the linear space $X^\#\boxast E^*$ with norm $\beta$, and by $X^\#\widetilde{\boxast}_\beta E^*$ the completion of $X^\#\boxast_\beta E^*$.

Given a dualizable Lipschitz cross-norm $\alpha$ on $X\boxtimes E$, the map $\alpha'\colon X^\#\boxast E^*\to\R$, given by   
$$
\alpha'\left(\sum_{j=1}^m g_j\boxtimes \phi_j\right)
=\sup\left\{\left|\left(\sum_{j=1}^m g_j\boxtimes\phi_j\right)\left(\sum_{i=1}^n \delta_{(x_i,y_i)}\boxtimes e_i\right)\right|
\colon \alpha\left(\sum_{i=1}^n \delta_{(x_i,y_i)}\boxtimes e_i\right)\leq 1\right\}
$$
for $\sum_{j=1}^m g_j\boxtimes \phi_j\in X^\#\boxast E^*$, is a Lipschitz cross-norm on $X^\#\boxast E^*$ called the \emph{associated Lipschitz norm of $\alpha$}, and clearly $X^\#\boxast_{\alpha'} E^*$ is a normed linear subspace of $(X\widetilde{\boxtimes}_\alpha E)^*$.

For $h\in\Lipo(X,Y)$ and $T\in\L(E,F)$, we also consider the linear operator $h\boxtimes T$ from $X\boxtimes E$ to $Y\boxtimes F$ given by 
$$
(h\boxtimes T)\left(\sum_{i=1}^n\delta_{(x_i,y_i)}\boxtimes e_i\right)
=\sum_{i=1}^n\delta_{(h(x_i),h(y_i))}\boxtimes T(e_i).
$$

\section{Cross-norm-Lipschitz operators}\label{alo}

In this section we introduce and study the concept that will give rise to the canonical association between Lipschitz cross-norms and ideals of Lipschitz maps.
It is the concept of cross-norm-Lipschitz operator from $X$ to $E^*$, which is an operator that induces a bounded functional on $X \boxtimes E$ endowed with a Lipschitz cross-norm. To be precise:

\begin{definition}\label{def-cross-norm-Lipschitz operator}
Let $\alpha$ be a Lipschitz cross-norm on $X\boxtimes E$. A base-point preserving map $f\colon X\to E^*$ is said to be an $\alpha$-Lipschitz operator if there exists a real constant $C\geq 0$ such that
$$
\left|\sum_{i=1}^n\left\langle f(x_i)-f(y_i),e_i \right\rangle\right|
\leq C\alpha\left(\sum_{i=1}^n\delta_{(x_i,y_i)}\boxtimes e_i\right)
$$
for all $\sum_{i=1}^n\delta_{(x_i,y_i)}\boxtimes e_i\in X\boxtimes E$. The infimum of such constants $C$ is denoted by $\Lipa(f)$ and called the $\alpha$\emph{-Lipschitz norm of} $f$. The set of all $\alpha$-Lipschitz operators from $X$ into $E^*$ is denoted by $\Lipa(X,E^*)$. 
\end{definition}

The following lemma justifies the terminology used in Definition \ref{def-cross-norm-Lipschitz operator}, since every $\alpha$-Lipschitz operator turns out to be a Lipschitz operator.

\begin{lemma}\label{lem-alpha-Lipschitz}
Let $\alpha$ be a Lipschitz cross-norm on $X\boxtimes E$. Then every $\alpha$-Lipschitz operator $f\colon X\to E^*$ is Lipschitz and $\Lip(f)\leq\Lipa(f)$.
\end{lemma}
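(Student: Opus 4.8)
The plan is to evaluate the defining inequality of an $\alpha$-Lipschitz operator on the simplest possible tensors, namely the single summands $\delta_{(x,y)}\boxtimes e$, and then to recover the Lipschitz constant of $f$ by exploiting that the norm on the dual space $E^*$ is computed by duality against the unit ball $B_E$.

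Concretely, first I would fix $x,y\in X$ with $x\neq y$ and $e\in B_E$, and apply the inequality in Definition \ref{def-cross-norm-Lipschitz operator} (with $C=\Lipa(f)$) to the one-term element $\delta_{(x,y)}\boxtimes e\in X\boxtimes E$. This yields
$$
\left|\left\langle f(x)-f(y),e\right\rangle\right|\leq\Lipa(f)\,\alpha\left(\delta_{(x,y)}\boxtimes e\right).
$$
Next, since $\alpha$ is a Lipschitz cross-norm, the right-hand side equals $\Lipa(f)\,d(x,y)\n{e}$, and using $\n{e}\leq 1$ it is at most $\Lipa(f)\,d(x,y)$.

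Finally, I would take the supremum over $e\in B_E$. Because $f(x)-f(y)\in E^*$, we have $\n{f(x)-f(y)}=\sup_{e\in B_E}\left|\left\langle f(x)-f(y),e\right\rangle\right|$, so the previous bound gives $\n{f(x)-f(y)}\leq\Lipa(f)\,d(x,y)$. As this holds for every pair $x\neq y$, dividing by $d(x,y)$ and taking the supremum over all such pairs shows that $f$ is Lipschitz with $\Lip(f)\leq\Lipa(f)$.

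The argument is essentially immediate, and no genuine obstacle arises; the only point requiring a moment of care is the last step, where the identification of the dual norm on $E^*$ with the supremum of test values against $B_E$ is exactly what matches the single-term $\alpha$-Lipschitz inequality to the definition of $\Lip(f)$. This stands in contrast to the converse-type estimates that will require substantially more structure later in the paper.
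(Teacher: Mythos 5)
Your proof is correct and follows essentially the same route as the paper: both evaluate the defining inequality on the elementary tensors $\delta_{(x,y)}\boxtimes e$, invoke the cross-norm identity $\alpha(\delta_{(x,y)}\boxtimes e)=d(x,y)\left\|e\right\|$, and recover $\left\|f(x)-f(y)\right\|\leq\Lipa(f)\,d(x,y)$ via the dual norm on $E^*$. The only cosmetic difference is that you restrict to $e\in B_E$ and take a supremum, while the paper works with arbitrary $e\in E$ and divides; these are the same argument.
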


\begin{proof}
Let $f\in\Lipa(X,E^*)$. %By Definition \ref{def-cross-norm-Lipschitz operator}, 
For $x,y\in X$ and $e\in E$, we have
$$
\left|\left\langle f(x)-f(y),e\right\rangle\right|
\leq\Lipa(f)\alpha\left(\delta_{(x,y)}\boxtimes e\right)
=\Lipa(f)d(x,y)\left\|e\right\|,
$$
hence $||f(x)-f(y)||\leq\Lipa(f)d(x,y)$ and so $f\in\Lipo(X,E^*)$ with $\Lip(f)\leq\Lipa(f)$.
\end{proof}

\begin{remark}\label{remark-def-cross-norm-Lipschitz operator}
Note that if $u=\sum_{i=1}^n\delta_{(x_i,y_i)}\boxtimes e_i\in X\boxtimes E$ and $f\in\Lipo(X,E^*)$, then  
$$
u(f)=\sum_{i=1}^n\left\langle f(x_i)-f(y_i),e_i\right\rangle,
$$
and therefore $f$ is in $\Lipa(X,E^*)$ if and only if $\left|u(f)\right|\leq C\alpha(u)$ for all $u\in X\boxtimes E$. Moreover, 
\begin{align*}
\Lipa(f)&=\min\left\{C\geq 0\colon \left|u(f)\right|\leq C\alpha(u),\; \forall u\in X\boxtimes E \right\}\\
        &=\sup\left\{\left|u(f)\right|\colon u\in X\boxtimes E,\; \alpha(u)\leq 1\right\}\\
        &=\sup\left\{\left|u(f)\right|\colon u\in X\boxtimes E,\; \alpha(u)=1\right\}.
\end{align*}
\end{remark}

\begin{lemma}\label{lem-Lipa-norm-space}
Let $\alpha$ be a Lipschitz cross-norm on $X\boxtimes E$. Then $\Lipa(X,E^*)$ is a normed space with the $\alpha$-Lipschitz norm. 
\end{lemma}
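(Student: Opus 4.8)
The plan is to read everything off the variational formula for $\Lipa$ established in Remark~\ref{remark-def-cross-norm-Lipschitz operator}, namely $\Lipa(f)=\sup\{|u(f)|\colon u\in X\boxtimes E,\ \alpha(u)\le 1\}$, which exhibits the $\alpha$-Lipschitz norm as a supremum of seminorms. First I would fix the ambient vector space: the base-point preserving maps from $X$ to $E^*$ form a vector space $V$ under pointwise addition and scalar multiplication. The crucial elementary observation is that for each fixed $u=\sum_{i=1}^n\delta_{(x_i,y_i)}\boxtimes e_i\in X\boxtimes E$ the assignment $f\mapsto u(f)=\sum_{i=1}^n\langle f(x_i)-f(y_i),e_i\rangle$ is linear in $f\in V$, since each summand $\langle f(x_i)-f(y_i),e_i\rangle$ is; consequently $f\mapsto|u(f)|$ is a seminorm on $V$.

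Next I would combine these seminorms. Since a supremum of seminorms is again a seminorm (with values in $[0,\infty]$), the formula above defines an extended seminorm on $V$, and by definition $\Lipa(X,E^*)$ is exactly the set on which this supremum is finite. This immediately yields that $\Lipa(X,E^*)$ is a linear subspace of $V$ (finiteness being preserved under scalar homogeneity and the triangle inequality) and that the restriction of $\Lipa$ to it is a genuine finite-valued seminorm; in particular the homogeneity $\Lipa(\lambda f)=|\lambda|\Lipa(f)$ and the subadditivity $\Lipa(f+g)\le\Lipa(f)+\Lipa(g)$ come for free from the sup-of-seminorms structure. Alternatively one can verify these two properties by hand directly from Definition~\ref{def-cross-norm-Lipschitz operator}, but routing them through the Remark is cleaner.

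It remains only to upgrade the seminorm to a norm, i.e.\ to check definiteness: $\Lipa(f)=0$ forces $f=0$. Here I would invoke Lemma~\ref{lem-alpha-Lipschitz}: since $\Lip(f)\le\Lipa(f)=0$, the map $f$ is constant, and being base-point preserving it satisfies $f(x)=f(0)=0$ for all $x\in X$. (One could also argue directly: $\Lipa(f)=0$ gives $u(f)=0$ for every $u$, and taking $u=\delta_{(x,0)}\boxtimes e$ yields $\langle f(x),e\rangle=0$ for all $e\in E$, whence $f(x)=0$.)

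I do not anticipate any real obstacle; the statement is a routine verification. The only point requiring a little care is definiteness, and that is precisely where the already-proved Lemma~\ref{lem-alpha-Lipschitz} (which itself rests on the cross-norm normalization $\alpha(\delta_{(x,y)}\boxtimes e)=d(x,y)\|e\|$) does the work. All the remaining axioms are automatic once one recognizes $\Lipa$ as the supremum of the seminorms $f\mapsto|u(f)|$.
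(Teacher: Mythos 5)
Your proposal is correct and, in substance, it is the paper's own proof: the paper likewise reads everything off Remark \ref{remark-def-cross-norm-Lipschitz operator} (its proof explicitly says ``Next we use Remark \ref{remark-def-cross-norm-Lipschitz operator}''), writing out by hand the estimates $|u(\lambda f)|\le|\lambda|\Lipa(f)\alpha(u)$ and $|u(f+g)|\le|u(f)|+|u(g)|\le(\Lipa(f)+\Lipa(g))\alpha(u)$ that your sup-of-seminorms packaging delivers abstractly. For definiteness, the paper uses exactly your parenthetical alternative argument: if $f\neq 0$, pick $x$ and $e$ with $\langle f(x),e\rangle\neq 0$, note $\alpha(\delta_{(x,0)}\boxtimes e)>0$, and conclude $\Lipa(f)>0$. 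Your primary route through Lemma \ref{lem-alpha-Lipschitz} ($\Lip(f)\le\Lipa(f)=0$ forces $f$ constant, hence $f=0$) is equally valid and arguably cleaner, since that lemma is proved immediately beforehand.

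One point in your set-up deserves attention. You declare $f\mapsto|u(f)|$ to be a seminorm on the space $V$ of \emph{all} base-point preserving maps, but an element $u\in X\boxtimes E$ is, by construction, a linear functional on $\Lipo(X,E^*)$ only; for a non-Lipschitz $f\in V$, the value $\sum_{i=1}^n\langle f(x_i)-f(y_i),e_i\rangle$ could a priori depend on which representation of $u$ you choose. (In fact it does not, but seeing this requires an argument, e.g.\ replacing $f$ by a Lipschitz map that agrees with it at the finitely many points involved and using that the two representations agree as functionals on $\Lipo(X,E^*)$.) The cheap repair is to use Lemma \ref{lem-alpha-Lipschitz} at the start rather than only at the definiteness step: it gives $\Lipa(X,E^*)\subseteq\Lipo(X,E^*)$, so you may take $\Lipo(X,E^*)$ as the ambient space, on which every $u$ is an honest linear functional, and your argument then runs verbatim (alternatively, index the seminorms by formal representations rather than by elements of $X\boxtimes E$). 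This is a presentational wrinkle with a one-line fix, not a mathematical gap, but as written the seminorm framework on $V$ is not fully justified.
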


\begin{proof}
Let $f,g\in\Lipa(X,E^*)$ and $\lambda\in\K$. Clearly, $\Lipa(f)\geq 0$. Assume $f\neq 0$. Then, for some $x\in X$ and $e\in E$, $\langle f(x),e\rangle\neq 0$, that is, $\langle\delta_{(x,0)}\boxtimes e,f\rangle\neq 0$. This implies that $\delta_{(x,0)}\boxtimes e\neq 0$ and thus $\alpha(\delta_{(x,0)}\boxtimes e)>0$. Then we have 
$\Lipa(f)\geq |\langle f(x),e\rangle|/\alpha(\delta_{(x,0)}\boxtimes e)>0$, as required. 
%For any $\sum_{i=1}^n\delta_{(x_i,y_i)}\boxtimes e_i\in X\boxtimes E$, we obtain
%$$\left|\sum_{i=1}^n\left\langle\lambda f(x_i)-\lambda f(y_i),e_i \right\rangle\right|
%=\left|\lambda\right|\left|\sum_{i=1}^n\left\langle f(x_i)-f(y_i),e_i \right\rangle\right|
%\leq\left|\lambda\right|\Lipa(f)\alpha\left(\sum_{i=1}^n\delta_{(x_i,y_i)}\boxtimes e_i\right)$$ 
Next we use Remark \ref{remark-def-cross-norm-Lipschitz operator}. For any $u\in X\boxtimes E$, we obtain
$$
\left|u(\lambda f)\right|
=\left|\lambda u(f)\right|
=\left|\lambda\right|\left|u(f)\right|
\leq\left|\lambda\right|\Lipa(f)\alpha(u),
$$
and therefore $\lambda f\in\Lipa(X,E^*)$ and $\Lipa(\lambda f)\leq\left|\lambda\right|\Lipa(f)$. Moreover, by above-proved, if $\lambda=0$, then $\Lipa(\lambda f)=0=\left|\lambda\right|\Lipa(f)$, and if $\lambda\neq 0$, we have $\Lipa(f)=\Lipa(\lambda^{-1}(\lambda f))\leq\left|\lambda\right|^{-1}\Lipa(\lambda f)$, and hence $\left|\lambda\right|\Lipa(f)\leq\Lipa(\lambda f)$. This proves that $\Lipa(\lambda f)=\left|\lambda\right|\Lipa(f)$. Finally, for all $u\in X\boxtimes E$, %we have %\begin{align*}
%\left|\sum_{i=1}^n\left\langle(f+g)(x_i)-(f+g)(y_i),e_i \right\rangle\right|
%&=\left|\sum_{i=1}^n\left\langle f(x_i)-f(y_i)+g(x_i)-g(y_i),e_i \right\rangle\right|\\
%&=\left|\sum_{i=1}^n\left\langle f(x_i)-f(y_i),e_i \right\rangle+\sum_{i=1}^n\left\langle g(x_i)-g(y_i),e_i \right\rangle\right|\\
%&\leq\left|\sum_{i=1}^n\left\langle f(x_i)-f(y_i),e_i \right\rangle\right|+\left|\sum_{i=1}^n\left\langle g(x_i)-g(y_i),e_i \right\rangle\right|\\
%&\leq\left(\Lipa(f)+\Lipa(g)\right)\alpha\left(\sum_{i=1}^n\delta_{(x_i,y_i)}\boxtimes e_i\right),
%\end{align*}
$$
\left|u(f+g)\right|=\left|u(f)+u(g)\right|\leq \left|u(f)\right|+\left|u(g)\right|\leq(\Lipa(f)+\Lipa(g))\alpha(u),
$$
and so $f+g\in\Lipa(X,E^*)$ and $\Lipa(f+g)\leq\Lipa(f)+\Lipa(g)$. This completes the proof of the lemma.
\end{proof}

We now identify the space of all Lipschitz operators from $X$ into $E^*$ with the space of all $\pi$-Lipschitz operators.

\begin{lemma}\label{lemma-Lipschitz-pi}
The sets $\Lipo(X,E^*)$ and $\Lipp(X,E^*)$ are equal. Moreover, $\Lip(f)=\Lipp(f)$ for all $f\in\Lipo(X,E^*)$.
\end{lemma}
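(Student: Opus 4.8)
The plan is to show the two inclusions of sets together with the corresponding norm inequalities, which will simultaneously establish equality of the sets and of the norms. Since $\pi$ is a Lipschitz cross-norm, Lemma \ref{lem-alpha-Lipschitz} already gives one direction: every $f\in\Lipp(X,E^*)$ is a Lipschitz operator with $\Lip(f)\leq\Lipp(f)$. So the real work is the reverse: starting from an arbitrary $f\in\Lipo(X,E^*)$, I must show $f$ is $\pi$-Lipschitz with $\Lipp(f)\leq\Lip(f)$.

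First I would fix $f\in\Lipo(X,E^*)$ and take an arbitrary element $u=\sum_{i=1}^n\delta_{(x_i,y_i)}\boxtimes e_i\in X\boxtimes E$. Using the evaluation formula from Remark \ref{remark-def-cross-norm-Lipschitz operator}, I have $u(f)=\sum_{i=1}^n\langle f(x_i)-f(y_i),e_i\rangle$. The estimate is then immediate from the Lipschitz property applied termwise:
$$
\left|u(f)\right|\leq\sum_{i=1}^n\left|\langle f(x_i)-f(y_i),e_i\rangle\right|\leq\sum_{i=1}^n\n{f(x_i)-f(y_i)}\n{e_i}\leq\Lip(f)\sum_{i=1}^n d(x_i,y_i)\n{e_i}.
$$
Crucially, this bound holds for every representation of $u$, so I may pass to the infimum over all representations on the right-hand side, obtaining $\left|u(f)\right|\leq\Lip(f)\,\pi(u)$ by the very definition of the Lipschitz projective norm $\pi$. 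By Remark \ref{remark-def-cross-norm-Lipschitz operator} this shows $f\in\Lipp(X,E^*)$ with $\Lipp(f)\leq\Lip(f)$.

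Combining this with the inequality from Lemma \ref{lem-alpha-Lipschitz} yields $\Lip(f)=\Lipp(f)$ and the equality of the two sets, completing the proof. I do not anticipate a genuine obstacle here, as the result is essentially a restatement of the defining infimum of $\pi$; the only point requiring care is the logical order, namely deriving the bound for a fixed representation \emph{before} taking the infimum, so that the projective norm appears correctly on the right.
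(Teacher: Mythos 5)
Your proposal is correct and follows essentially the same route as the paper: the paper's proof also reduces everything to the inequality $\left|u(f)\right|\leq\Lip(f)\pi(u)$ combined with Lemma \ref{lem-alpha-Lipschitz}, except that the paper asserts this inequality without justification while you spell out the termwise estimate and the passage to the infimum over representations.
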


\begin{proof}
Let $f\in\Lipo(X,E^*)$. Since $|u(f)|\leq\Lip(f)\pi(u)$ for all $u\in X\boxtimes E$, we infer that $f\in\Lipp(X,E^*)$ and $\Lipp(f)\leq\Lip(f)$. The lemma now follows by Lemma \ref{lem-alpha-Lipschitz}.
\end{proof}

J. D. Farmer and W. B. Johnson introduced in \cite{fj} the notion of Lipschitz $p$-summing operators between metric spaces for $1\leq p<\infty$. See \cite{cd11} for the case $p=\infty$. Let us recall that if $X$ and $Y$ are pointed metric spaces, a map $f\in\Lipo(X,Y)$ is said to be \emph{Lipschitz $p$-summing} ($1\leq p\leq\infty$) if there exists a constant $C\geq 0$ such that regardless of the natural number $n$ and regardless of the choices of points $x_1,\ldots,x_n\in X$ and $y_1,\ldots,y_n\in X$, we have the inequality
\begin{align*}
\left(\sum_{i=1}^n d(f(x_i),f(y_i))^p\right)^{\frac{1}{p}}\leq C \sup_{g\in B_{X^\#}}\left(\sum_{i=1}^n\left|g(x_i)-g(y_i)\right|^p\right)^{\frac{1}{p}}\quad &\text{if} \quad 1\leq p<\infty,\\
\max_{1\leq i\leq n}d(f(x_i),f(y_i))\leq C \sup_{g\in B_{X^\#}}\left(\max_{1\leq i\leq n}\left|g(x_i)-g(y_i)\right|\right)\quad &\text{if} \quad  p=\infty.
\end{align*}
The infimum of such constants is denoted by $\pi_p^L(f)$ and called the \emph{Lipschitz $p$-summing norm of} $f$.

If $E$ is a Banach space, the set $\Pi^L_p(X,E^*)$ of all Lipschitz $p$-summing operators from $X$ into $E^*$ with the norm $\pi_p^L$ is a Banach space (see \cite{fj,cd11}). If $p'$ is the conjugate index of $p\in [1,\infty]$, we next identify the Lipschitz $p$-summing operators from $X$ to $E^*$ with the $d_{p'}$-Lipschitz operators.

\begin{theorem}\label{teo-alpha-Lipschitz-summing}
Let $1\leq p\leq\infty$. Then $\Lipdp(X,E^*)=\Pi^L_{p'}(X,E^*)$ and $\Lipdp(f)=\pi_{p'}^L(f)$ for every $f\in\Lipdp(X,E^*)$.
\end{theorem}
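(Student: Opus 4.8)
The plan is to compute the $d_p$-Lipschitz norm directly and compare it, via H\"older's inequality and its converse, with the Lipschitz $p'$-summing norm. By Remark \ref{remark-def-cross-norm-Lipschitz operator}, $\Lipdp(f)=\sup\{|u(f)|\colon u\in X\boxtimes E,\ d_p(u)\le 1\}$, where $u(f)=\sum_i\langle f(x_i)-f(y_i),e_i\rangle$ for any representation $u=\sum_i\delta_{(x_i,y_i)}\boxtimes e_i$. Since $u(f)$ depends only on $u$ while $d_p(u)$ is an infimum over representations, I would first unfold this supremum into one over finite families $(x_i,y_i,e_i)_{i=1}^n$ subject to the normalization $W\,(\sum_i\|e_i\|^p)^{1/p}\le 1$ extracted from the definition of $d_p$, where $W=\sup_{g\in B_{X^\#}}(\sum_i|g(x_i)-g(y_i)|^{p'})^{1/p'}$ is exactly the right-hand side appearing in the Lipschitz $p'$-summing inequality (so $W=d(x,y)$ for a single pair).

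For the inequality $\Lipdp(f)\le\pi_{p'}^L(f)$, I would take any such family and estimate $|\sum_i\langle f(x_i)-f(y_i),e_i\rangle|\le\sum_i\|f(x_i)-f(y_i)\|\,\|e_i\|$; H\"older's inequality with conjugate exponents $p'$ and $p$ bounds the right-hand side by $(\sum_i\|f(x_i)-f(y_i)\|^{p'})^{1/p'}(\sum_i\|e_i\|^p)^{1/p}$. The Lipschitz $p'$-summing inequality gives $(\sum_i\|f(x_i)-f(y_i)\|^{p'})^{1/p'}\le\pi_{p'}^L(f)\,W$, and the normalization $W(\sum_i\|e_i\|^p)^{1/p}\le 1$ then yields $|u(f)|\le\pi_{p'}^L(f)$. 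Passing to the supremum proves the inequality, and in particular that every Lipschitz $p'$-summing operator is $d_p$-Lipschitz.

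The reverse inequality $\pi_{p'}^L(f)\le\Lipdp(f)$ is the crux. I would fix $\varepsilon>0$ and a finite family $(x_i,y_i)$ whose $p'$-summing ratio $(\sum_i\|f(x_i)-f(y_i)\|^{p'})^{1/p'}/W$ lies within $\varepsilon$ of $\pi_{p'}^L(f)$. For each $i$ I would choose, from the definition of the norm on $E^*$, a vector $b_i\in B_E$ with $\langle f(x_i)-f(y_i),b_i\rangle\ge\|f(x_i)-f(y_i)\|-\varepsilon$, and nonnegative weights $(t_i)$ realizing the equality case of H\"older, so that $(\sum_i t_i^p)^{1/p}=1$ and $\sum_i t_i\|f(x_i)-f(y_i)\|=(\sum_i\|f(x_i)-f(y_i)\|^{p'})^{1/p'}$. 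Setting $e_i=t_i b_i/W$ and $u=\sum_i\delta_{(x_i,y_i)}\boxtimes e_i$, this very representation gives $d_p(u)\le W(\sum_i\|e_i\|^p)^{1/p}=(\sum_i t_i^p)^{1/p}=1$, whereas $u(f)=W^{-1}\sum_i t_i\langle f(x_i)-f(y_i),b_i\rangle$ is within a controlled multiple of $\varepsilon$ of that ratio. Hence $\Lipdp(f)\ge u(f)$, and letting $\varepsilon\to 0$ gives $\pi_{p'}^L(f)\le\Lipdp(f)$; combined with the previous paragraph this yields both the set equality and the equality of norms.

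\textbf{Main obstacle.} The difficulty is concentrated in this reverse construction: it demands two simultaneous approximation choices --- the norming vectors $b_i\in B_E$ for the functionals $f(x_i)-f(y_i)\in E^*$ and the H\"older-extremal weights $t_i$ --- together with a careful check that the specific representation chosen genuinely satisfies $d_p(u)\le 1$, so that $\Lipdp(f)$ really controls $u(f)$; one must also dispose of the degenerate families where $W=0$ or some $f(x_i)=f(y_i)$. Lastly, the endpoint exponents $p=1$ (so $p'=\infty$) and $p=\infty$ (so $p'=1$) require separate treatment, with the relevant $\ell_p$- or $\ell_{p'}$-sums replaced by maxima and $\ell_1$--$\ell_\infty$ duality in place of H\"older; for $p=1$ one may alternatively deduce the statement from Lemma \ref{lemma-Lipschitz-pi} together with the identity $d_1=\pi$.
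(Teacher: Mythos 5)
Your proposal is correct and follows essentially the same route as the paper: the easy direction is the identical H\"older-plus-summing estimate applied to every representation and then minimized, and the hard direction rests on the same $\ell_p$--$\ell_{p'}$ duality with almost-norming vectors, which the paper packages as computing the norm of the auxiliary functional $T(t_1,\dots,t_n)=\sum_i t_i\left\|f(x_i)-f(y_i)\right\|$ on $(\K^n,\left\|\cdot\right\|_p)$ while you instead exhibit the H\"older-extremal weights explicitly. The paper also treats the endpoints $p=1,\infty$ by separate (max versus sum) versions of the same argument, exactly as you indicate.
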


\begin{proof}
Let $f\in\Pi^L_{p'}(X,E^*)$ and $u\in X\boxtimes E$. If $\sum_{i=1}^n \delta_{(x_i,y_i)}\boxtimes e_i$ is a representation of $u$, then 
\begin{align*}
\left|u(f)\right|
&=\left|\sum_{i=1}^n \left\langle f(x_i)-f(y_i),e_i\right\rangle\right|\\
&\leq\sum_{i=1}^n\left\|f(x_i)-f(y_i)\right\|\left\|e_i\right\|\\
&\leq\left(\sum_{i=1}^n\left\|f(x_i)-f(y_i)\right\|^{p'}\right)^{\frac{1}{p'}}
\left(\sum_{i=1}^n\left\|e_i\right\|^p\right)^{\frac{1}{p}}\\
&\leq\pi_{p'}^L(f)\left(\sum_{i=1}^n\left\|e_i\right\|^p\right)^{\frac{1}{p}}
\sup_{g\in B_{X^\#}}\left(\sum_{i=1}^n\left|g(x_i)-g(y_i)\right|^{p'}\right)^{\frac{1}{p'}}
\end{align*}
in the case $1<p<\infty$.  When $p=1$, we have 
\begin{align*}
\left|u(f)\right|
&\leq\sum_{i=1}^n\left\|f(x_i)-f(y_i)\right\|\left\|e_i\right\|\\
&\leq\left(\max_{1\leq i\leq n}\left\|f(x_i)-f(y_i)\right\|\right)\sum_{i=1}^n\left\|e_i\right\|\\
&\leq\pi_{\infty}^L(f)\sup_{g\in B_{X^\#}}\left(\max_{1\leq i\leq n}\left|g(x_i)-g(y_i)\right|\right)\sum_{i=1}^n\left\|e_i\right\|,
\end{align*}
and, for $p=\infty$, 
\begin{align*}
\left|u(f)\right|
&\leq\sum_{i=1}^n\left\|f(x_i)-f(y_i)\right\|\left\|e_i\right\|\\
&\leq\left(\max_{1\leq i\leq n}\left\|e_i\right\|\right)\sum_{i=1}^n\left\|f(x_i)-f(y_i)\right\|\\
&\leq\pi_1^L(f)\left(\max_{1\leq i\leq n}\left\|e_i\right\|\right)\sup_{g\in B_{X^\#}}\left(\sum_{i=1}^n\left|g(x_i)-g(y_i)\right|\right).
\end{align*}
Taking the infimum over all such representations of $u$, we deduce that $\left|u(f)\right|\leq\pi_{p'}^L(f)d_p(u)$. Since $u$ was arbitrary in $X\boxtimes E$, it follows that $f\in\Lipdp(X,E^*)$ and $\Lipdp(f)\leq\pi_{p'}^L(f)$.

%In what follows we will use the following fact.

%\begin{lemma}
%Let $E$ be a normed space, $f\in E^*$, $f\neq 0$ and $\varepsilon>0$. Then there exists $e_0\in E$ such that $\left\|e_0\right\|<1+\varepsilon$ and $f(e_0)=\left\|f\right\|$.
%\end{lemma}

%\begin{proof}
%Since $f$ is a non-zero linear functional on $E$, then $f$ is surjective. Hence $f(e_1)=\left\|f\right\|$ for some $e_1\in E$. Since $e_1\notin\ker(f)=\overline{\ker}(f)$, the Hahn--Banach theorem gives a functional $g\in S_{X^*}$ for which $g(e)=0$ if $e\in\ker(f)$ and $g(e_1)=d(e_1,\ker(f))$. The inclusion $\ker(f)\subset\ker(g)$ implies that $g=\lambda f$ for some scalar $\lambda$ (take $e-e_1f(e)/f(e_1)$ for all $e\in E$) and clearly $1=\left|\lambda\right|\left\|f\right\|$. We have 
%$$
%d(e_1,\ker(f))=g(e_1)=\lambda f(e_1)=\lambda\left\|f\right\|=\left|\lambda\right|\left\|f\right\|=1.
%$$ 
%Then there exists $e_2\in\ker(f)$ such that $\left\|e_1-e_2\right\|<1+\varepsilon$. Take $e_0=e_1-e_2$. Then $\left\|e_0\right\|<1+\varepsilon$ and $f(e_0)=f(e_1)-f(e_2)=f(e_1)=\left\|f\right\|$, as required.
%\end{proof}

Conversely, let $f\in\Lipdp(X,E^*)$ and let $n\in\N$, $x_1,\ldots,x_n\in X$ and $y_1,\ldots,y_n\in X$. Let $\varepsilon>0$. Then, for each $i\in\{1,\ldots,n\}$, there exists $e_i\in E$ with $||e_i||\leq 1+\varepsilon$ such that $\langle f(x_i)-f(y_i),e_i\rangle=||f(x_i)-f(y_i)||$. It is elementary that the map $T\colon\K^n\to\K$, defined by 
$$
T(t_1,\ldots,t_n)=\sum_{i=1}^n t_i \left\|f(x_i)-f(y_i)\right\|,\quad\forall (t_1,\ldots,t_n)\in\K^n,
$$
is linear and continuous on $(\K^n,||\cdot||_p)$ with 
$$
\left\|T\right\|%:=\sup\left\{\left|T(t_1,\ldots,t_n)\right|\colon (t_1,\ldots,t_n)\in\K^n,\; \left\|(t_1,\ldots,t_n)\right\|_p\leq 1\right\}
=\left\{
\begin{array}{lll}
	\left(\sum_{i=1}^n\left\|f(x_i)-f(y_i)\right\|^{p'}\right)^{1/p'}&\text{if}&1<p\leq\infty,\\
& &\\
\max_{1\leq i\leq n}\left\|f(x_i)-f(y_i)\right\|&\text{if}&p=1.
\end{array}
\right.
$$
For any $(t_1,\ldots,t_n)\in\K^n$ with $||(t_1,\ldots,t_n)||_p\leq 1$, we have
$$
\left|T(t_1,\ldots,t_n)\right|
%=\left|\sum_{i=1}^n t_i \left\|f(x_i)-f(y_i)\right\|\right|
%=\left|\sum_{i=1}^n t_i \left\langle f(x_i)-f(y_i),e_i\right\rangle\right|
=\left|\sum_{i=1}^n \left\langle f(x_i)-f(y_i),t_ie_i\right\rangle\right|
=\left|\sum_{i=1}^n \left\langle \delta_{(x_i,y_i)}\boxtimes (t_ie_i),f\right\rangle\right|
\leq\Lipdp(f)d_p\left(\sum_{i=1}^n\delta_{(x_i,y_i)}\boxtimes (t_ie_i)\right).
$$
If $1<p<\infty$, it follows that 
\begin{align*}
\left|T(t_1,\ldots,t_n)\right|
&\leq\Lipdp(f)\left(\sum_{i=1}^n\left\|t_ie_i\right\|^p\right)^{\frac{1}{p}}\sup_{g\in B_{X^\#}}\left(\sum_{i=1}^n\left|g(x_i)-g(y_i)\right|^{p'}\right)^{\frac{1}{p'}}\\
&\leq\Lipdp(f)(1+\varepsilon)\sup_{g\in B_{X^\#}}\left(\sum_{i=1}^n\left|g(x_i)-g(y_i)\right|^{p'}\right)^{\frac{1}{p'}},
\end{align*}
consequently, we have
$$
\left(\sum_{i=1}^n\left\|f(x_i)-f(y_i)\right\|^{p'}\right)^{\frac{1}{p'}}
\leq\Lipdp(f)(1+\varepsilon)\sup_{g\in B_{X^\#}}\left(\sum_{i=1}^n\left|g(x_i)-g(y_i)\right|^{p'}\right)^{\frac{1}{p'}},
$$
and since $\varepsilon$ was arbitrary, we deduce that 
$$
\left(\sum_{i=1}^n\left\|f(x_i)-f(y_i)\right\|^{p'}\right)^{\frac{1}{p'}}
\leq\Lipdp(f)\sup_{g\in B_{X^\#}}\left(\sum_{i=1}^n\left|g(x_i)-g(y_i)\right|^{p'}\right)^{\frac{1}{p'}},
$$
and so $f\in\Pi_{p'}^L(X,E^*)$ with $\pi_{p'}^L(f)\leq\Lipdp(f)$.  Reasoning similarly, we arrive at the same conclusion for the cases $p=1$ and $p=\infty$. Indeed, if $p=1$, we have  
\begin{align*}
\left|T(t_1,\ldots,t_n)\right|
&\leq\Lipdu(f)\left(\sum_{i=1}^n\left\|t_ie_i\right\|\right)\sup_{g\in B_{X^\#}}\left(\max_{1\leq i\leq n}\left|g(x_i)-g(y_i)\right|\right)\\
&\leq\Lipdu(f)(1+\varepsilon)\sup_{g\in B_{X^\#}}\left(\max_{1\leq i\leq n}\left|g(x_i)-g(y_i)\right|\right),
\end{align*}
which gives    
$$
\max_{1\leq i\leq n}\left\|f(x_i)-f(y_i)\right\|\leq\Lipdu(f)\sup_{g\in B_{X^\#}}\left(\max_{1\leq i\leq n}\left|g(x_i)-g(y_i)\right|\right),
$$
and so $f\in\Pi_{\infty}^L(X,E^*)$ with $\pi_{\infty}^L(f)\leq\Lipdu(f)$.  For $p=\infty$, we have 
\begin{align*}
\left|T(t_1,\ldots,t_n)\right|
&\leq\Lipdi(f)\left(\max_{1\leq i\leq n}\left\|t_ie_i\right\|\right)\sup_{g\in B_{X^\#}}\left(\sum_{i=1}^n\left|g(x_i)-g(y_i)\right|\right)\\
&\leq\Lipdi(f)(1+\varepsilon)\sup_{g\in B_{X^\#}}\left(\sum_{i=1}^n\left|g(x_i)-g(y_i)\right|\right),
\end{align*}
hence 
$$
\sum_{i=1}^n\left\|f(x_i)-f(y_i)\right\|\leq\Lipdi(f)\sup_{g\in B_{X^\#}}\left(\sum_{i=1}^n\left|g(x_i)-g(y_i)\right|\right),
$$
and so $f\in\Pi_{1}^L(X,E^*)$ with $\pi_{1}^L(f)\leq\Lipdi(f)$. 
\end{proof}

%-----------------------

A similar description can be obtained for the class of maps admitting a Lipschitz factorization through a subset of an $L_p$ space. This has been proved, though stated in a slightly different language, in \cite{CD-Lipschitz-factorization} for $p=2$. Let us recall the basic definitions.
%This section is just a restatement of results from , although in that paper the details were only fully written out for the case $p=2$.
For any pointed metric spaces $X$ and $Y$, $f \in \Lipo(X,Y)$ and $1 \le p \le \infty$, consider the infimum of $\Lip(R) \cdot \Lip(S)$ taken over all factorizations of the form
\begin{equation*}
	\xymatrix{
	 &Z  \ar[dr]^{S} &  \\
	X \ar[ur]^R \ar[rr]_{f}  & &Y   
	}
\end{equation*}
where $\mu$ is a measure and $Z$ is a subset of $L_p(\mu)$.
We will denote this infimum by $\gamma_p^{\Lip}(f)$, inspired by the notation of a similar situation in Banach space theory,
and by $\Gamma_p^\Lip(X,Y)$ the set of all maps in  $\Lipo(X,Y)$ admitting such a factorization.
For a pointed metric space $X$ and a Banach space $E$, it is not hard to show that $\big( \Gamma_p^\Lip(X,E) , \gamma_p^\Lip\big)$ is
a Banach space.
For $x_j,x_j',y_i,y_i'\in X, \lambda_i,\mu_j\in\R$ and $1 \le p \le \infty$, we write $(\lambda_i,y_i,y_i')_{i=1}^n \prec_p (\mu_j,x_j,x_j')_{j=1}^m$ if for every $f \in X^\#$,
$$
\sum_{i=1}^n \big|\lambda_i[f(y_i)-f(y_i')]\big|^p \le \sum_{j=1}^m \big|\mu_j[f(x_j)-f(x_j')]\big|^p.
$$
Equivalently (see \cite[Lemma 3.2]{CD-Lipschitz-factorization}), this means that there exists a linear map $A = (a_{ij})\colon\ell_p^m \to \ell_p^n$ of norm at most one such that for each
$1 \le i \le n$,
$$
\lambda_i\delta_{(y_i,y_i')} = \sum_{j=1}^m a_{ij}\mu_j\delta_{(x_j,x_j')}.
$$

\begin{definition}
Let $X$ be a pointed metric space, $E$ be a Banach space and $1 \le p \le \infty$.
For $u \in X \boxtimes E$, define
\begin{multline*}
w_p(u) = \inf \bigg\{  \Big(\sum_{i=1}^n \n{e_i}^{p} \Big)^{1/p}\Big(\sum_{j=1}^m \mu_j^{p'}d(x_j,x_j')^{p'}\Big)^{1/p'} \colon x_j,x_j',y_i,y_i'\in X, \lambda_i,\mu_j\in\R,  e_i \in E,  \\
 u = \sum_{i=1}^n \lambda_i\delta_{(y_i,y_i')} \boxtimes e_i \text{ and } (\lambda_i,y_i,y_i')_{i=1}^n \prec_{p'} (\mu_j,x_j,x_j')_{j=1}^m \bigg\}.
\end{multline*}
\end{definition}

Similar arguments to those in \cite{CD-Lipschitz-factorization} for $p=2$ show that the norm $w_p$ is a uniform and dualizable Lipschitz cross-norm on $X\boxtimes E$. Furthermore, the Banach space of all $w_p$-Lipschitz operators from $X$ to $E^*$ can be identified with the Banach space $\Gamma^{\Lip}_{p'}(X,E^*)$, according to the following restatement of \cite[Theorem 4.5]{CD-Lipschitz-factorization} (though in that paper the proof is written out in the special case $p=2$, the details carry over to the general case).

\begin{theorem}\label{teo-Lipschitz-factorization}
Let $1\leq p\leq\infty$. Then $\Lipwp(X,E^*)=\Gamma^{\Lip}_{p'}(X,E^*)$ and $\Lipwp(f)=\gamma_{p'}^{\Lip}(f)$ for every $f\in\Lipwp(X,E^*)$.
\end{theorem}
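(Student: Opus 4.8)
The plan is to prove the equality of the two sets together with the isometry by establishing the two norm inequalities separately, exactly as in the proof of Theorem~\ref{teo-alpha-Lipschitz-summing}. Throughout I would work directly from the characterization $\Lipwp(f)=\sup\{|u(f)|\colon u\in X\boxtimes E,\ w_p(u)\le1\}$ of Remark~\ref{remark-def-cross-norm-Lipschitz operator}, and from the matrix reformulation of the majorization relation in \cite[Lemma 3.2]{CD-Lipschitz-factorization}: that $(\lambda_i,y_i,y_i')_{i=1}^n\prec_{p'}(\mu_j,x_j,x_j')_{j=1}^m$ is witnessed by a matrix $A=(a_{ij})$ with $\|A\colon\ell_{p'}^m\to\ell_{p'}^n\|\le1$ and $\lambda_i\delta_{(y_i,y_i')}=\sum_{j}a_{ij}\mu_j\delta_{(x_j,x_j')}$ as functionals on $X^\#$.

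For the inequality $\Lipwp(f)\le\gamma_{p'}^{\Lip}(f)$, I would fix a Lipschitz factorization $X\xrightarrow{R}Z\subseteq L_{p'}(\mu)\xrightarrow{S}E^*$ of $f$ and bound $|u(f)|$ on an arbitrary admissible representation $u=\sum_{i=1}^n\lambda_i\delta_{(y_i,y_i')}\boxtimes e_i$ with $(\lambda_i,y_i,y_i')_{i=1}^n\prec_{p'}(\mu_j,x_j,x_j')_{j=1}^m$. Writing $f=S\circ R$, using that $S$ is Lipschitz, and applying Hölder with exponents $p,p'$ gives
$$
|u(f)|\le\Lip(S)\Big(\sum_{i=1}^n\|e_i\|^p\Big)^{1/p}\Big(\sum_{i=1}^n|\lambda_i|^{p'}\|R(y_i)-R(y_i')\|_{p'}^{p'}\Big)^{1/p'}.
$$
The one non-formal point is to transfer the second factor from the points $y_i,y_i'$ to $x_j,x_j'$. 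For this I would evaluate the molecular identity $\lambda_i\delta_{(y_i,y_i')}=\sum_j a_{ij}\mu_j\delta_{(x_j,x_j')}$ against the scalar functions $z\mapsto R(z)(\omega)$; although these need not be globally Lipschitz, the identity is legitimate after restricting to the finite set $\{y_i,y_i',x_j,x_j'\}$, since every function on a finite metric space extends to a member of $X^\#$ by McShane's theorem. Fixing measurable representatives, the pointwise $\ell_{p'}$-contraction furnished by $\|A\|\le1$ integrates to
$$
\sum_{i=1}^n|\lambda_i|^{p'}\|R(y_i)-R(y_i')\|_{p'}^{p'}\le\sum_{j=1}^m|\mu_j|^{p'}\|R(x_j)-R(x_j')\|_{p'}^{p'}\le\Lip(R)^{p'}\sum_{j=1}^m\mu_j^{p'}d(x_j,x_j')^{p'},
$$
and taking infima over representations and then over factorizations yields $|u(f)|\le\gamma_{p'}^{\Lip}(f)\,w_p(u)$ for every $u$, i.e.\ $f\in\Lipwp(X,E^*)$ with $\Lipwp(f)\le\gamma_{p'}^{\Lip}(f)$.

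For the converse inequality $\gamma_{p'}^{\Lip}(f)\le\Lipwp(f)$, I would normalize $\Lipwp(f)\le1$ and aim to produce a Pietsch-type probability measure $\nu$ on the $w^*$-compact ball $B_{X^\#}$ satisfying the domination
$$
\|f(x)-f(x')\|^{p'}\le\int_{B_{X^\#}}|g(x)-g(x')|^{p'}\,d\nu(g)\qquad(x,x'\in X).
$$
Granting such a $\nu$, the factorization is canonical: set $R\colon X\to L_{p'}(\nu)$, $R(x)=[g\mapsto g(x)]$, and $S\colon R(X)\to E^*$, $S(R(x))=f(x)$. Because $\Lip(g)\le1$ for $g\in B_{X^\#}$ and $\nu$ is a probability measure, $\Lip(R)\le1$; and the displayed domination makes $S$ well defined with $\Lip(S)\le1$, so $\gamma_{p'}^{\Lip}(f)\le1$.

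I expect the construction of $\nu$ to be the main obstacle. By the Ky Fan / Hahn--Banach minimax lemma on $C(B_{X^\#})$, its existence reduces to the scalar inequality $\sum_k\mu_k^{p'}\|f(x_k)-f(x_k')\|^{p'}\le\sup_{g\in B_{X^\#}}\sum_k\mu_k^{p'}|g(x_k)-g(x_k')|^{p'}$ for all finite families. This in turn I would obtain from $\Lipwp(f)\le1$ by testing the defining inequality on the element $u=\sum_k\mu_k^{p'}\|f(x_k)-f(x_k')\|^{p'-1}\delta_{(x_k,x_k')}\boxtimes\tilde e_k$, where $\tilde e_k\in B_E$ nearly norms $f(x_k)-f(x_k')$, after identifying the infimal majorization cost $\inf\{(\sum_j\nu_j^{p'}d(z_j,z_j')^{p'})^{1/p'}\colon(\mu_k,x_k,x_k')\prec_{p'}(\nu_j,z_j,z_j')\}$ with the supremal weighted $p'$-variation $\sup_{g\in B_{X^\#}}(\sum_k\mu_k^{p'}|g(x_k)-g(x_k')|^{p'})^{1/p'}$; this duality between majorization cost and test functionals is the delicate point, and is exactly what must be checked to carry the $p=2$ reasoning of \cite[Theorem 4.5]{CD-Lipschitz-factorization} over to general $p$. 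Finally, the endpoint cases $p=1$ (factorization through $L_\infty$) and $p=\infty$ (through $L_1$) are handled by replacing the $\ell_{p'}$-sums above with maxima, exactly as in the three-case split of Theorem~\ref{teo-alpha-Lipschitz-summing}.
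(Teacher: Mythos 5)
Your first inequality, $\Lipwp(f)\le\gamma_{p'}^{\Lip}(f)$, is proved correctly: the H\"older estimate, the use of the matrix form of $\prec_{p'}$ from \cite[Lemma 3.2]{CD-Lipschitz-factorization}, and the McShane-extension justification for applying the molecular identity $\lambda_i\delta_{(y_i,y_i')}=\sum_j a_{ij}\mu_j\delta_{(x_j,x_j')}$ to the pointwise functions $z\mapsto R(z)(\omega)$ (which are only defined on a finite set) are all sound, and this is essentially the only way this direction can go.

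The converse direction, however, contains a fatal error rather than a repairable gap. A probability measure $\nu$ on $B_{X^\#}$ satisfying $\|f(x)-f(x')\|^{p'}\le\int_{B_{X^\#}}|g(x)-g(x')|^{p'}\,d\nu(g)$ is, by the Pietsch domination theorem of Farmer and Johnson \cite{fj}, \emph{exactly} the statement that $f$ is Lipschitz $p'$-summing with $\pi_{p'}^L(f)\le 1$. So your argument, if it worked, would prove $\Lipwp(f)\le 1\Rightarrow\pi_{p'}^L(f)\le 1$; combined with your (correct) first half it would yield $\Gamma_{p'}^{\Lip}(X,E^*)\subseteq\Pi_{p'}^L(X,E^*)$ with $\pi_{p'}^L\le\gamma_{p'}^{\Lip}$. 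This is false: take $p=p'=2$, $X=E=L_2$ and $f=\mathrm{id}$. Then $\gamma_2^{\Lip}(f)=1$ (factor through $L_2$ itself), so $\mathrm{Lip}_{w_2}(f)\le 1$ by your first half, yet Farmer and Johnson proved that for a \emph{linear} operator the Lipschitz $p$-summing norm coincides with the $p$-summing norm, whence $\pi_2^L(\mathrm{id}_{L_2})=\pi_2(\mathrm{id}_{L_2})=\infty$. Thus no such measure $\nu$ exists for this $f$. The precise step that fails is the identity you yourself flag as ``the delicate point'': the infimal majorization cost $\inf\{(\sum_j\nu_j^{p'}d(z_j,z_j')^{p'})^{1/p'}\colon(\mu_k,x_k,x_k')\prec_{p'}(\nu_j,z_j,z_j')\}$ dominates $\sup_{g\in B_{X^\#}}(\sum_k\mu_k^{p'}|g(x_k)-g(x_k')|^{p'})^{1/p'}$, but it is in general strictly larger; equality would, by your own reduction, collapse $\Gamma_{p'}^{\Lip}$ into $\Pi_{p'}^L$.

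For comparison: the paper does not write out a proof at all, but invokes \cite[Theorem 4.5]{CD-Lipschitz-factorization} (done there for $p=2$, with the assertion that the argument carries over). The structure of that argument is genuinely different from yours in the hard direction. From $\Lipwp(f)\le 1$ one derives, by testing on elements $u=\sum_i\lambda_i\delta_{(y_i,y_i')}\boxtimes e_i$ whose admissible majorization $(\mu_j,x_j,x_j')$ is prescribed in advance, a Kwapie\'n-type monotonicity condition: whenever $(\lambda_i,y_i,y_i')_i\prec_{p'}(\mu_j,x_j,x_j')_j$, then $\sum_i\lambda_i^{p'}\|f(y_i)-f(y_i')\|^{p'}\le\sum_j\mu_j^{p'}d(x_j,x_j')^{p'}$. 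Note the right-hand side here is the \emph{cost of the majorizing family}, not a supremum over $B_{X^\#}$, so no inf--sup identity is needed; the majorization relation is already built into the definition of $w_p$. The factorization through a subset of $L_{p'}$ is then constructed from this monotonicity condition by the characterization theorem of \cite{CD-Lipschitz-factorization}, not from a measure on $B_{X^\#}$. If you replace your Pietsch-domination scheme by this monotonicity scheme, your first half plus the three-case endpoint handling (as in Theorem \ref{teo-alpha-Lipschitz-summing}) gives a complete proof.
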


%---------------------

\section{Duality for spaces of cross-norm-Lipschitz operators}\label{alo2}

As was expected from the definition, we now verify that
if $\alpha$ is a Lipschitz cross-norm on $X\boxtimes E$, there is a canonical identification between the normed space $\Lipa(X,E^*)$ and the dual space of $X\widehat{\boxtimes}_\alpha E$. In particular, $\Lipa(X,E^*)$ will be a Banach space.

%Given a Lipschitz cross-norm $\alpha$ on $X\boxtimes E$, let us recall that a linear functional $T_0\colon X\boxtimes E\to\K$ is bounded on $X\boxtimes_\alpha E$ if there exists a real constant $C>0$ such that 
%$$\left|\left\langle T_0,\sum_{i=1}^n \delta_{(x_i,y_i)}\boxtimes e_i\right\rangle\right|\leq C \; \alpha\left(\sum_{i=1}^n \delta_{(x_i,y_i)}\boxtimes e_i\right)$$
%for all $\sum_{i=1}^n \delta_{(x_i,y_i)}\boxtimes e_i\in X\boxtimes E$. In this case, the norm of $T_0$ is the infimum of such constants $C$ and it is denoted by $||T_0||_\alpha$. By \cite[II.4 Theorem 4.1]{tay}, since $X\widehat{\boxtimes}_\alpha E$ is the completion of $X\boxtimes_\alpha E$ and $T_0\colon X\boxtimes E\to\K$ is a bounded linear functional on $X\boxtimes_\alpha E$, then there exists a uniquely determined bounded linear functional $T$ on $X\widehat{\boxtimes}_\alpha E$ such that $T(u)=T_0(u)$ if $u\in X\boxtimes_\alpha E$. Moreover, $||T||_\alpha=||T_0||_\alpha$.  

\begin{theorem}\label{teo-dual}
Let $\alpha$ be a Lipschitz cross-norm on $X\boxtimes E$. Then $\Lipa(X,E^*)$ is isometrically isomorphic to $(X\widehat{\boxtimes}_\alpha E)^*$, via the map $\Lambda\colon\Lipa(X,E^*)\to (X\widehat{\boxtimes}_\alpha E)^*$ defined by
$$
\Lambda(f)(u)=\sum_{i=1}^n\left\langle f(x_i)-f(y_i),e_i\right\rangle
$$
for $f\in\Lipa(X,E^*)$ and $u=\sum_{i=1}^n \delta_{(x_i,y_i)}\boxtimes e_i\in X\boxtimes E$. Its inverse $\Lambda^{-1}\colon (X\widehat{\boxtimes}_\alpha E)^*\to\Lipa(X,E^*)$ is the map given by 
$$
\left\langle \Lambda^{-1}(\varphi)(x),e\right\rangle
=\varphi(\delta_{(x,0)}\boxtimes e) 
$$
for $\varphi\in (X\widehat{\boxtimes}_\alpha E)^*$, $x\in X$ and $e\in E$.
\end{theorem}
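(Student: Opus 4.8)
The plan is to realize $\Lambda$ as the map sending $f$ to the unique continuous extension to the completion of the evaluation functional $u\mapsto u(f)$, and then to produce its inverse explicitly. The groundwork is essentially laid by Remark \ref{remark-def-cross-norm-Lipschitz operator}: for $f\in\Lipa(X,E^*)$ and $u=\sum_{i=1}^n\delta_{(x_i,y_i)}\boxtimes e_i\in X\boxtimes E$ one has $u(f)=\sum_{i=1}^n\langle f(x_i)-f(y_i),e_i\rangle$, a quantity depending only on $u$ and not on its chosen representation, since $u$ is by definition a fixed linear functional on $\Lipo(X,E^*)$. Thus $u\mapsto u(f)$ is a well-defined linear functional on $X\boxtimes E$, and the defining inequality of an $\alpha$-Lipschitz operator says precisely that it is $\alpha$-bounded, with $|u(f)|\leq\Lipa(f)\,\alpha(u)$. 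By density of $X\boxtimes_\alpha E$ in its completion it extends uniquely and isometrically to an element $\Lambda(f)\in(X\widehat{\boxtimes}_\alpha E)^*$, which is the value of the formula in the statement.

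First I would record that $\Lambda$ is linear and isometric. Linearity is immediate from $u(\lambda f+g)=\lambda\,u(f)+u(g)$, already exploited in Lemma \ref{lem-Lipa-norm-space}. For the isometry I would compute $\|\Lambda(f)\|=\sup\{|u(f)|\colon u\in X\widehat{\boxtimes}_\alpha E,\ \alpha(u)\leq 1\}$, which by continuity and density reduces to the supremum over $u\in X\boxtimes E$ with $\alpha(u)\leq 1$; by the supremum expression for $\Lipa(f)$ in Remark \ref{remark-def-cross-norm-Lipschitz operator} this is exactly $\Lipa(f)$. In particular $\Lambda$ is injective.

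The main step is surjectivity, where the stated formula for $\Lambda^{-1}$ is used to build a preimage. Given $\varphi\in(X\widehat{\boxtimes}_\alpha E)^*$, I define $f\colon X\to E^*$ by $\langle f(x),e\rangle=\varphi(\delta_{(x,0)}\boxtimes e)$. For fixed $x$ this is linear in $e$, and the cross-norm identity gives $|\varphi(\delta_{(x,0)}\boxtimes e)|\leq\|\varphi\|\,\alpha(\delta_{(x,0)}\boxtimes e)=\|\varphi\|\,d(x,0)\|e\|$, so $f(x)\in E^*$; moreover $\delta_{(0,0)}\boxtimes e=0$ in $X\boxtimes E$ forces $f(0)=0$. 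The key algebraic identity is that, as linear functionals on $\Lipo(X,E^*)$,
$$
\delta_{(x,y)}\boxtimes e=\delta_{(x,0)}\boxtimes e-\delta_{(y,0)}\boxtimes e
$$
for all $x,y\in X$ and $e\in E$, which one verifies by evaluating both sides at an arbitrary $g\in\Lipo(X,E^*)$. Using it together with the linearity of $\varphi$, a direct computation yields $u(f)=\varphi(u)$ for every $u=\sum_{i=1}^n\delta_{(x_i,y_i)}\boxtimes e_i\in X\boxtimes E$. Hence $|u(f)|=|\varphi(u)|\leq\|\varphi\|\,\alpha(u)$, so $f\in\Lipa(X,E^*)$, and $\Lambda(f)$ agrees with $\varphi$ on the dense subspace $X\boxtimes E$, whence $\Lambda(f)=\varphi$. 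This simultaneously establishes surjectivity and identifies $\Lambda^{-1}$ with the formula in the statement.

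I expect the only genuine subtlety to be bookkeeping about where each object lives---distinguishing the functional $u$ on $\Lipo(X,E^*)$ from the element $u$ of the normed space $X\boxtimes_\alpha E$, and tracking the passage to the completion---rather than any deep difficulty. The crux is the elementary identity $\delta_{(x,y)}\boxtimes e=\delta_{(x,0)}\boxtimes e-\delta_{(y,0)}\boxtimes e$, which is exactly what makes the candidate inverse actually invert $\Lambda$.
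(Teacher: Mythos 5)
Your proposal is correct and follows essentially the same route as the paper: $\Lambda(f)$ is the continuous extension by density of $u\mapsto u(f)$, and surjectivity is obtained by the same explicit construction $\langle f(x),e\rangle=\varphi(\delta_{(x,0)}\boxtimes e)$, resting on the identity $\delta_{(x,y)}\boxtimes e=\delta_{(x,0)}\boxtimes e-\delta_{(y,0)}\boxtimes e$ (which the paper uses implicitly). Your only departures are cosmetic: you get the isometry directly from the supremum formula of Remark \ref{remark-def-cross-norm-Lipschitz operator} instead of combining the two inequalities $\|\Lambda(f)\|\leq\Lipa(f)$ and $\Lipa(f)\leq\|\varphi\|$, and you deduce injectivity from the isometry rather than citing the monomorphism result from the earlier paper.
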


\begin{proof}
By \cite[Corollary 1.8]{ccjv}, the map $f\mapsto\widetilde{\Lambda}(f)$ from $\Lipo(X,E^*)$ into $(X\boxtimes E)'$, defined by 
$$
\widetilde{\Lambda}(f)(u)=\sum_{i=1}^n\left\langle f(x_i)-f(y_i),e_i\right\rangle 
$$
for $f\in\Lipo(X,E^*)$ and $u=\sum_{i=1}^n \delta_{(x_i,y_i)}\boxtimes e_i\in X\boxtimes E$, is a linear monomorphism. Since $\Lipa(X,E^*)$ is a linear subspace of $\Lipo(X,E^*)$ by Lemma \ref{lem-Lipa-norm-space}, and  $\Lambda$ is the restriction of $
\widetilde{\Lambda}$ to $\Lipa(X,E^*)$, it follows that $\Lambda$ is a linear monomorphism from $\Lipa(X,E^*)$ into $(X\boxtimes E)'$. Let $f\in\Lipa(X,E^*)$. Then 
$$
\left|\Lambda(f)(u)\right|=\left|\sum_{i=1}^n\left\langle f(x_i)-f(y_i), e_i\right\rangle\right|\leq\Lipa(f)\alpha(u)
$$
for all $u=\sum_{i=1}^n \delta_{(x_i,y_i)}\boxtimes e_i\in X\boxtimes E$. This implies that $\Lambda(f)$ is bounded on $X\boxtimes_\alpha E$ and also on its completion $X\widehat{\boxtimes}_\alpha E$ by the denseness of the former set in the latter one. Therefore $\Lambda(f)\in (X\widehat{\boxtimes}_\alpha E)^*$ and $||\Lambda(f)||\leq\Lipa(f)$. In order to see that $\Lambda$ is a surjective isometry, let $\varphi$ be in $(X\widehat{\boxtimes}_\alpha E)^*$. Define $f\colon X\to E^*$ by
$$
\left\langle f(x),e\right\rangle=\varphi(\delta_{(x,0)}\boxtimes e)\qquad\left(x\in X,\; e\in E\right).
$$ 
It is easy to check that $f(x)$ is a well-defined bounded linear functional on $E$ and that $f$ is well-defined. Notice that $\langle f(x)-f(y),e\rangle=\varphi(\delta_{(x,y)}\boxtimes e)$ for all $x,y\in X$ and $e\in E$. For any $\sum_{i=1}^n\delta_{(x_i,y_i)}\boxtimes e_i\in X\boxtimes E$, we have
$$
\left|\sum_{i=1}^n\left\langle f(x_i)-f(y_i),e_i \right\rangle\right|
=\left|\varphi\left(\sum_{i=1}^n\delta_{(x_i,y_i)}\boxtimes e_i\right)\right|
\leq \left\|\varphi\right\|\alpha\left(\sum_{i=1}^n\delta_{(x_i,y_i)}\boxtimes e_i\right),
$$
and therefore $f\in\Lipa(X,E^*)$ and $\Lipa(f)\leq ||\varphi||$. For any $u=\sum_{i=1}^n \delta_{(x_i,y_i)}\boxtimes e_i\in X\boxtimes E$, we obtain 
$$
\Lambda(f)(u)
=\sum_{i=1}^n\left\langle f(x_i)-f(y_i),e_i\right\rangle
=\sum_{i=1}^n \varphi(\delta_{(x_i,y_i)}\boxtimes e_i)
=\varphi\left(\sum_{i=1}^n \delta_{(x_i,y_i)}\boxtimes e_i\right)
=\varphi(u).
$$
Hence $\Lambda(f)=\varphi$ on a dense subspace of $X\widehat{\boxtimes}_\alpha E$ and, consequently, $\Lambda(f)=\varphi$. Moreover, $\Lipa(f)\leq ||\varphi||=||\Lambda(f)||$ as required. Finally, it follows that $\langle\Lambda^{-1}(\varphi)(x),e\rangle=\langle f(x),e\rangle=\varphi(\delta_{(x,0)}\boxtimes e) $ for $\varphi\in (X\widehat{\boxtimes}_\alpha E)^*$, $x\in X$ and $e\in E$.
\end{proof}

Theorem \ref{teo-dual} and Lemma \ref{lemma-Lipschitz-pi} give the next result of J. A. Johnson \cite[Theorems 4.1 and 5.8]{j70}.

\begin{corollary}\label{01}
The space $\Lipo(X,E^*)$ is isometrically isomorphic to $(X\widehat{\boxtimes}_\pi E)^*$.
\end{corollary}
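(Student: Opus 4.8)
The plan is to obtain this as a direct specialization of Theorem \ref{teo-dual} to the cross-norm $\alpha=\pi$, combined with the identification of Lemma \ref{lemma-Lipschitz-pi}. No genuinely new work is required; the two cited results already carry all the content, and the task is simply to observe that they dovetail.

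First I would recall, as noted in Section \ref{s1}, that $\pi$ is a bona fide Lipschitz cross-norm on $X\boxtimes E$ (it is in fact the greatest such norm). Consequently Theorem \ref{teo-dual} applies verbatim with $\alpha=\pi$, yielding that the map $\Lambda\colon\Lipp(X,E^*)\to(X\widehat{\boxtimes}_\pi E)^*$, given by
$$
\Lambda(f)(u)=\sum_{i=1}^n\left\langle f(x_i)-f(y_i),e_i\right\rangle
$$
for $f\in\Lipp(X,E^*)$ and $u=\sum_{i=1}^n\delta_{(x_i,y_i)}\boxtimes e_i\in X\boxtimes E$, is a surjective isometric isomorphism.

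Next I would invoke Lemma \ref{lemma-Lipschitz-pi}, which asserts that the sets $\Lipo(X,E^*)$ and $\Lipp(X,E^*)$ coincide and that $\Lip(f)=\Lipp(f)$ for every $f$. Thus the identity is an isometric isomorphism of the normed space $(\Lipo(X,E^*),\Lip)$ onto $(\Lipp(X,E^*),\Lipp)$. Composing this with $\Lambda$ exhibits $\Lipo(X,E^*)$ as isometrically isomorphic to $(X\widehat{\boxtimes}_\pi E)^*$, which is the assertion.

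Since every step is an immediate appeal to an already-established result, there is no real obstacle to surmount here; the only point worth verifying is the admissibility of the choice $\alpha=\pi$ in Theorem \ref{teo-dual}, and that is guaranteed by $\pi$ being a Lipschitz cross-norm. The statement is therefore best presented as a short corollary whose proof is the single sentence that Theorem \ref{teo-dual} applied to $\alpha=\pi$ together with Lemma \ref{lemma-Lipschitz-pi} gives the claim.
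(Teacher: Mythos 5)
Your proposal is correct and follows exactly the paper's own route: the paper derives this corollary precisely by applying Theorem \ref{teo-dual} with $\alpha=\pi$ and invoking Lemma \ref{lemma-Lipschitz-pi} to identify $\left(\Lipo(X,E^*),\Lip\right)$ with $\left(\Lipp(X,E^*),\Lipp\right)$. Your only addition is the explicit remark that $\pi$ is indeed a Lipschitz cross-norm, which is a reasonable verification to make explicit but changes nothing in substance.
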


From Theorems \ref{teo-dual} and \ref{teo-alpha-Lipschitz-summing}, we derive the following description for the space of Lipschitz $p$-summing operators from $X$ into $E^*$. Compare it with \cite[Theorem 4.3]{cd11}.% by J. A. Ch\'{a}vez-Dom\'{i}nguez.

\begin{corollary}
For $1\leq p\leq\infty$, the space $\Pi^L_p(X,E^*)$ is isometrically isomorphic to $(X\widehat{\boxtimes}_{d_{p'}} E)^*$.
\end{corollary}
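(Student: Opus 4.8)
The plan is to obtain the stated identification simply by composing the two isometric isomorphisms already established, so the work is entirely a matter of choosing the correct index and verifying that the hypotheses of each theorem are met. First I would apply Theorem \ref{teo-alpha-Lipschitz-summing} with the index $p'$ in place of $p$. Since the conjugate index satisfies $(p')'=p$ for every $p\in[1,\infty]$ (with the convention $1'=\infty$ and $\infty'=1$), that theorem yields $\mathrm{Lip}_{d_{p'}}(X,E^*)=\Pi^L_p(X,E^*)$ together with the norm equality $\mathrm{Lip}_{d_{p'}}(f)=\pi_p^L(f)$ for every $f$ in this common space. In particular the two norms coincide, so the identity map is an isometric isomorphism between $\mathrm{Lip}_{d_{p'}}(X,E^*)$ and $\Pi^L_p(X,E^*)$.

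Next I would invoke Theorem \ref{teo-dual}. For it to apply I must first know that $d_{p'}$ is a genuine Lipschitz cross-norm on $X\boxtimes E$; this was recalled in Section \ref{s1}, where it is noted that $d_q$ is a (uniform and dualizable) Lipschitz cross-norm for every $q\in[1,\infty]$, and $q=p'$ ranges over all of $[1,\infty]$ as $p$ does. Taking $\alpha=d_{p'}$ in Theorem \ref{teo-dual} then shows that $\mathrm{Lip}_{d_{p'}}(X,E^*)$ is isometrically isomorphic to $(X\widehat{\boxtimes}_{d_{p'}} E)^*$ via the canonical map $\Lambda$ of that theorem.

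Composing the two identifications produces the desired isometric isomorphism $\Pi^L_p(X,E^*)\cong(X\widehat{\boxtimes}_{d_{p'}} E)^*$. There is no substantive obstacle here: the entire content of the corollary is already contained in the two cited theorems, and the only point requiring care is the bookkeeping with conjugate indices — specifically checking that $(p')'=p$ and that $d_{p'}$ remains a legitimate Lipschitz cross-norm at the endpoints $p=1$ (where $p'=\infty$) and $p=\infty$ (where $p'=1$), both of which are covered by the facts recalled in Section \ref{s1}.
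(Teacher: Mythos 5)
Your proposal is correct and follows exactly the paper's own route: the corollary is stated there as an immediate consequence of Theorems \ref{teo-dual} and \ref{teo-alpha-Lipschitz-summing}, precisely the composition you describe. Your explicit attention to the conjugate-index substitution $(p')'=p$ and the endpoint cases is the only detail the paper leaves implicit, and it is handled correctly.
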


Similarly, Theorems \ref{teo-dual} and \ref{teo-Lipschitz-factorization} give the following identification, stated in \cite[Corollary 4.6]{CD-Lipschitz-factorization} for $p=2$.

\begin{corollary}
For $1\leq p\leq\infty$, the space $\Gamma_p^{\Lip}(X,E^*)$ is isometrically isomorphic to $(X\widehat{\boxtimes}_{w_{p'}} E)^*$.
\end{corollary}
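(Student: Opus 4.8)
The plan is to obtain this corollary exactly as the two preceding ones, by chaining an identification of $\Gamma_p^{\Lip}(X,E^*)$ with a space of cross-norm-Lipschitz operators together with the duality supplied by Theorem \ref{teo-dual}. The substantive work has already been done in the earlier results; what remains is their composition, with some care taken over the conjugate index.

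First I would apply Theorem \ref{teo-Lipschitz-factorization}, but with the conjugate index in place of its free parameter. Reading that theorem with $p'$ substituted for $p$ gives $\Lip_{w_{p'}}(X,E^*)=\Gamma^{\Lip}_{(p')'}(X,E^*)=\Gamma^{\Lip}_p(X,E^*)$, with equality of the corresponding norms, since $(p')'=p$. Thus $\Gamma_p^{\Lip}(X,E^*)$ and $\Lip_{w_{p'}}(X,E^*)$ are literally the same normed (indeed Banach) space, with identical norms.

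Next I would invoke Theorem \ref{teo-dual} with $\alpha=w_{p'}$. This is legitimate because $w_{p'}$ is a Lipschitz cross-norm on $X\boxtimes E$, as recorded in the discussion preceding Theorem \ref{teo-Lipschitz-factorization} (each $w_q$ is a uniform and dualizable Lipschitz cross-norm; here one takes $q=p'$). Theorem \ref{teo-dual} then furnishes a canonical isometric isomorphism $\Lambda\colon\Lip_{w_{p'}}(X,E^*)\to(X\widehat{\boxtimes}_{w_{p'}}E)^*$. Composing this with the identification of the previous step yields the asserted isometric isomorphism between $\Gamma_p^{\Lip}(X,E^*)$ and $(X\widehat{\boxtimes}_{w_{p'}}E)^*$.

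There is essentially no obstacle here beyond bookkeeping. The only points that demand attention are the involution of the conjugate index --- making sure one substitutes $p'$, not $p$, into Theorem \ref{teo-Lipschitz-factorization} so that its output index becomes $p$ --- and verifying that the cross-norm hypothesis of Theorem \ref{teo-dual} holds for $\alpha=w_{p'}$, including the boundary cases $p=1$ and $p=\infty$, where $p'$ equals $\infty$ and $1$ respectively.
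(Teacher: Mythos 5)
Your proposal is correct and coincides with the paper's own argument: the paper derives this corollary precisely by combining Theorem \ref{teo-dual} (applied with $\alpha=w_{p'}$, which is legitimate since $w_q$ is a dualizable Lipschitz cross-norm for every $q$) with Theorem \ref{teo-Lipschitz-factorization} read at the conjugate index, so that $\Gamma_p^{\Lip}(X,E^*)=\Lip_{w_{p'}}(X,E^*)$ isometrically. Your attention to the involution $(p')'=p$ and to the endpoint cases is exactly the bookkeeping the paper leaves implicit.
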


Since $\Lipa(X,E^*)$ is a dual space by Theorem \ref{teo-dual}, we may consider it equipped with its weak* topology.
  
\begin{definition}\label{def-weak-topologies}
Let $\alpha$ be a Lipschitz cross-norm on $X\boxtimes E$. The weak* topology (in short, w*) on $\Lipa(X,E^*)$ is the weak* topology on $(X\widehat{\boxtimes}_\alpha E)^*$, that is, the topology induced by the linear space $\kappa_{X\widehat{\boxtimes}_\alpha E}(X\widehat{\boxtimes}_\alpha E)$ of linear functionals on $(X\widehat{\boxtimes}_\alpha E)^*$, where $\kappa_{X\widehat{\boxtimes}_\alpha E}$ is the canonical injection from $X\widehat{\boxtimes}_\alpha E$ into $(X\widehat{\boxtimes}_\alpha E)^{**}$.
\end{definition}

We also introduce on $\Lipa(X,E^*)$ another topology that we will use later.

\begin{definition}
Let $\alpha$ be a Lipschitz cross-norm on $X\boxtimes E$. The weak* Lipschitz operator topology (in short, w*Lo) on $\Lipa(X,E^*)$ is the topology induced by the linear space $X\boxtimes E$ of linear functionals on $\Lipa(X,E^*)$.
\end{definition}

The following facts on w*Lo can be deduced from the theory on topologies induced by families of functions (see, for example, \cite[Section 2.4]{meg}). %The topology w* satisfies similar properties (see \cite[Section 2.6]{meg}).

\begin{remark}\label{remark new}
Let $\alpha$ be a Lipschitz cross-norm on $X\boxtimes E$.
\begin{enumerate}
	\item w*Lo is a locally convex topology on $\Lipa(X,E^*)$, and the dual space of $\Lipa(X,E^*)$ with respect to this topology is $X\boxtimes E$. Since the family of functions $X\boxtimes E$ is separating, then w*Lo is completely regular.
	\item If $\{f_\gamma\}$ is a net in $\Lipa(X,E^*)$ and $f\in\Lipa(X,E^*)$, then $\{f_\gamma\}$ converges to $f$ in the w*Lo topology if and only if $\{u(f_\gamma)\}$ converges to $u(f)$ for each $u\in X\boxtimes E$.
	\item If $B(X,E^*)$ is a linear subspace of $\Lipa(X,E^*)$ and $\Lipa(X,E^*)$ is equipped with the w*Lo topology, then the relative w*Lo topology of $\Lipa(X,E^*)$ on $B(X,E^*)$ agrees with the topology induced by the linear space $\{\left.u\right|_{B(X,E^*)}\colon u\in X\boxtimes E\}$ of linear functionals on $B(X,E^*)$.
\end{enumerate}
\end{remark}

\begin{corollary}\label{cor-theo-dual}
Let $\alpha$ be a Lipschitz cross-norm on $X\boxtimes E$. 
\begin{enumerate}
\item A net $\{f_\gamma\}$ in $\Lipa(X,E^*)$ converges to $f\in\Lipa(X,E^*)$ in the weak* topology if and only if $\{u(f_\gamma)\}$ converges to $u(f)$ for every $u\in X\widehat{\boxtimes}_\alpha E$.
\item On $\Lipa(X,E^*)$, the weak* Lipschitz operator topology is weaker than the weak* topology. Moreover, on bounded subsets of $\Lipa(X,E^*)$, both topologies agree. 
\end{enumerate}
\end{corollary}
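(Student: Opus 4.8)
The plan is to deduce everything from the isometric identification $\Lambda\colon\Lipa(X,E^*)\to(X\widehat{\boxtimes}_\alpha E)^*$ of Theorem \ref{teo-dual}, under which $\Lambda(f)(u)=u(f)$ for $u\in X\boxtimes E$ and, by continuous extension along the dense inclusion $X\boxtimes E\subseteq X\widehat{\boxtimes}_\alpha E$, we write $u(f):=\Lambda(f)(u)$ for every $u$ in the completion. For part (i), one simply unwinds Definition \ref{def-weak-topologies}: the weak* topology on $\Lipa(X,E^*)$ is, by definition, the topology that $\Lambda$ pulls back from the weak* topology of $(X\widehat{\boxtimes}_\alpha E)^*$, and the latter is induced by the evaluation functionals $\varphi\mapsto\varphi(u)$, $u\in X\widehat{\boxtimes}_\alpha E$. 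Hence a net $\{f_\gamma\}$ converges to $f$ weak* if and only if $\Lambda(f_\gamma)(u)\to\Lambda(f)(u)$ for every $u\in X\widehat{\boxtimes}_\alpha E$, which is exactly $u(f_\gamma)\to u(f)$ for every $u\in X\widehat{\boxtimes}_\alpha E$.

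For part (ii), the w*Lo topology is induced by the functionals $f\mapsto u(f)$ with $u$ ranging over $X\boxtimes E$, while the weak* topology is induced by the same type of functionals but with $u$ ranging over the larger set $X\widehat{\boxtimes}_\alpha E\supseteq X\boxtimes E$. Since a topology induced by a subfamily of a separating family of functionals is coarser, w*Lo is weaker than weak*; in particular the identity map $(B,\text{w*})\to(B,\text{w*Lo})$ is continuous on any subset $B$. For the agreement on bounded sets it remains to prove the reverse direction on a bounded set $B$, say $\Lipa(g)\le M$ for all $g\in B$. It suffices to show that a net $\{f_\gamma\}$ in $B$ converging to $f\in B$ in the w*Lo topology also converges to $f$ weak*. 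Fix $u\in X\widehat{\boxtimes}_\alpha E$ and $\varepsilon>0$, and use the density of $X\boxtimes E$ to pick $v\in X\boxtimes E$ with $\alpha(u-v)<\varepsilon$. Since $\Lambda$ is an isometry (Theorem \ref{teo-dual}), we have $|w(g)|=|\Lambda(g)(w)|\le\Lipa(g)\,\alpha(w)$ for all $w\in X\widehat{\boxtimes}_\alpha E$ and all $g\in\Lipa(X,E^*)$, so splitting gives
\[
|u(f_\gamma)-u(f)|\le|(u-v)(f_\gamma)|+|v(f_\gamma)-v(f)|+|(v-u)(f)|\le 2M\varepsilon+|v(f_\gamma)-v(f)|.
\]
Because $v\in X\boxtimes E$, the middle term tends to $0$ by w*Lo convergence, whence $\limsup_\gamma|u(f_\gamma)-u(f)|\le 2M\varepsilon$; letting $\varepsilon\to 0$ yields $u(f_\gamma)\to u(f)$. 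By part (i) this is precisely weak* convergence, so the two relative topologies coincide on $B$.

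The only substantive step is this last estimate: the coincidence of the two topologies on bounded sets is an instance of the general principle that on a norm-bounded — hence equicontinuous — subset of a dual space, the weak* topology is already determined by pointwise convergence on any dense subspace of the predual. The uniform bound $M$ on $B$ controls the two ``tail'' terms $|(u-v)(f_\gamma)|$ and $|(v-u)(f)|$, while the density of $X\boxtimes E$ in $X\widehat{\boxtimes}_\alpha E$ makes $\alpha(u-v)$ small; everything else is a formal translation through $\Lambda$ of the definitions recorded in Definition \ref{def-weak-topologies} and Remark \ref{remark new}.
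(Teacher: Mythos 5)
Your proof is correct, and it diverges from the paper's in the one step that carries real content. Part (i) and the first half of part (ii) are essentially identical to the paper's argument: unwind Definition \ref{def-weak-topologies} through the isometry $\Lambda$ of Theorem \ref{teo-dual}, and note that the functionals inducing w*Lo form a subfamily of those inducing the weak* topology. For the agreement of the two topologies on bounded sets, however, the paper uses a compactness argument: on a bounded (closed) ball the weak* topology is compact by Alaoglu's theorem, the w*Lo topology is Hausdorff, and a continuous bijection from a compact space onto a Hausdorff space is a homeomorphism. You instead run the classical equicontinuity/density (``three-epsilon'') argument: the uniform bound $\Lipa(f_\gamma),\Lipa(f)\le M$ controls the tail terms $\left|(u-v)(f_\gamma)\right|$ and $\left|(v-u)(f)\right|$, density of $X\boxtimes E$ in $X\widehat{\boxtimes}_\alpha E$ makes $\alpha(u-v)$ small, and since a topology is determined by its convergent nets, the two relative topologies on $B$ coincide. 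Your route is more elementary --- it needs neither Alaoglu nor the compact-to-Hausdorff lemma --- and it is insensitive to whether the bounded set is weak*-closed, whereas the paper's one-line argument, read literally, applies to an arbitrary bounded subset only after enclosing it in a closed ball (a bounded set need not itself be weak*-compact). What the paper's approach buys in exchange is brevity and reuse of standard dual-space machinery already available from Theorem \ref{teo-dual}. Both arguments are complete.
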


\begin{proof}
(i) Let $\Lambda\colon\Lipa(X,E^*)\to (X\widehat{\boxtimes}_\alpha E)^*$ be the isometric isomorphism defined in Theorem \ref{teo-dual}. We have 
\begin{align*}
\{f_\gamma\}\to f \text{ in } (\Lipa(X,E^*),w^*)
&\Leftrightarrow \{\Lambda(f_\gamma)\}\to \Lambda(f) \text{ in } ((X\widehat{\boxtimes}_\alpha E)^*,w^*)\\
&\Leftrightarrow \left\{\left\langle \kappa_{X\widehat{\boxtimes}_\alpha E}(u),\Lambda(f_\gamma)\right\rangle\right\}\to \left\langle \kappa_{X\widehat{\boxtimes}_\alpha E}(u),\Lambda(f)\right\rangle,\quad \forall u\in X\widehat{\boxtimes}_\alpha E\\
&\Leftrightarrow \left\{\Lambda(f_\gamma)(u)\right\}\to \Lambda(f)(u),\quad \forall u\in X\widehat{\boxtimes}_\alpha E\\
&\Leftrightarrow \left\{u(f_\gamma)\right\}\to u(f),\quad \forall u\in X\widehat{\boxtimes}_\alpha E.
\end{align*}
(ii) Let $\{f_\gamma\}$ be a net in $\Lipa(X,E^*)$ which converges in the w* topology to $f\in\Lipa(X,E^*)$. By (i), $\{u(f_\gamma)\}$ converges to $u(f)$ for each $u\in X\widehat{\boxtimes}_\alpha E$. In particular, $\{u(f_\gamma)\}$ converges to $u(f)$ for each $u\in X\boxtimes E$ since $X\boxtimes E\subset X\widehat{\boxtimes}_\alpha E$. This means that $\{f_\gamma\}$ converges to $f$ in the w*Lo topology. Therefore the identity on $\Lipa(X,E^*)$ is a continuous bijection from the w* topology to the w*Lo topology and thus the latter topology is weaker that the former as required. On a bounded subset of $\Lipa(X,E^*)$, the w* topology is compact and the w*Lo topology is Hausdorff, so both topologies must coincide.
\end{proof}

%OJO Podemos completar con los siguientes resultados. See \cite[Lemma 4.4 and Corollary 4.5]{cd11}. In what follows, if $E$ and $F$ are two isometrically isomorphic Banach spaces, then this will be denoted writing $E\equiv F$. 

%\begin{theorem}\label{teo-finite}
%Let $X$ be a finite pointed metric space, $E$ a Banach space and $\alpha$ a Lipschitz cross-norm on $X\boxtimes E$. Then $\Lipa(X,E)^{**}$ is isometrically isomorphic to $\Lipa(X,E^{**})$.
%\end{theorem}

%\begin{proof}OJO\end{proof}

%\begin{theorem}\label{teo-finite2}
%Let $X$ be a finite pointed metric space, $E$ a Banach space and $\alpha$ a Lipschitz cross-norm on $X\boxtimes E$. Then $\Lipa(X,E)^*$ is isometrically isomorphic to $X\widehat{\boxtimes}_\alpha E^*$.
%\end{theorem}

%\begin{proof}
%We have $(X\widehat{\boxtimes}_\alpha E^*)^*\equiv \Lipa(X,E^{**})$ by Theorem \ref{teo-dual} and $\Lipa(X,E^{**})\equiv \Lipa(X,E)^{**}$ by Theorem \ref{teo- finite}. Hence $(X\widehat{\boxtimes}_\alpha E^*)^*\equiv \Lipa(X,E)^{**}$ and this isometry is w*-w*-continuous (see that convergence in $((X\widehat{\boxtimes}_\alpha E^*)^*,w^*)$ implies convergence in $(\Lipa(X,E^{**}),w^*Lo)$ which is convergence in $(\Lipa(X,E)^{**},w^*)$). Then the cited isometry is the adjoint of an isometry between $X\widehat{\boxtimes}_\alpha E^*$ and $\Lipa(X,E)^*$.
%\end{proof}

\section{Cross-norm-Lipschitz approximable operators}\label{frlo}

The concepts of Lipschitz finite-rank operators and Lipschitz approximable operators from $X$ into $E$ were introduced in \cite{jsv}. The following facts and their proofs can be found in \cite{ccjv} where these classes of Lipschitz operators were studied in the case of $E^*$-valued operators on $X$. 

Let us recall that a Lipschitz operator $f\in\Lipo(X,E^*)$ is said to be Lipschitz finite-rank if the linear span $\lin(f(X))$ of $f(X)$ in $E^*$ is finite dimensional; in that case the \emph{rank of} $f$, denoted by $\rank(f)$, is defined as the dimension of $\lin(f(X))$. We denote by $\LipoF(X,E^*)$ the linear space of all Lipschitz finite-rank operators from $X$ into $E^*$. For every $g\in X^\#$ and $\phi\in E^*$, the map $g\cdot\phi\colon X\to E^*$, defined by $(g\cdot\phi)(x)=g(x)\phi$ for all $x\in X$, is in $\LipoF(X,E^*)$ with $\Lip(g\cdot\phi)=\Lip(g)\left\|\phi\right\|$ by \cite[Lemma 1.5]{ccjv}. Furthermore, every operator $f\in\LipoF(X,E^*)$ can be expressed in the form $f=\sum_{j=1}^m g_j\cdot \phi_j$, where $m=\rank(f)$, $g_1,\ldots,g_m\in X^\#$ and $\phi_1,\ldots,\phi_m\in E^*$. 

We study the relation between Lipschitz finite-rank operators and cross-norm-Lipschitz operators of $X$ into $E^*$.

\begin{theorem}\label{teo-finite-range}
Let $\alpha$ be a dualizable Lipschitz cross-norm on $X\boxtimes E$. For every $g\in X^\#$ and $\phi\in E^*$, the map $g\cdot\phi$ belongs to $\Lipa(X,E^*)$ and $\Lipa(g\cdot\phi)=\Lip(g)||\phi||$. As a consequence, $\LipoF(X,E^*)$ is contained in $\Lipa(X,E^*)$.
\end{theorem}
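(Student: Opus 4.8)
The plan is to exploit the fact that the functional on $X\boxtimes E$ induced by $g\cdot\phi$ is precisely the functional $g\boxtimes\phi$ from the preliminaries, so that the whole statement reduces to a restatement of the dualizability hypothesis. First I would fix an arbitrary $u=\sum_{i=1}^n\delta_{(x_i,y_i)}\boxtimes e_i\in X\boxtimes E$ and compute, using Remark \ref{remark-def-cross-norm-Lipschitz operator} together with the definition $(g\cdot\phi)(x)=g(x)\phi$,
$$
u(g\cdot\phi)=\sum_{i=1}^n\langle g(x_i)\phi-g(y_i)\phi,e_i\rangle=\sum_{i=1}^n\left(g(x_i)-g(y_i)\right)\langle\phi,e_i\rangle=(g\boxtimes\phi)(u).
$$
Since $\alpha$ is assumed dualizable, the dualizability inequality applied to $g$ and $\phi$ gives exactly $\left|u(g\cdot\phi)\right|\leq\Lip(g)\left\|\phi\right\|\,\alpha(u)$ for every $u\in X\boxtimes E$, so by Definition \ref{def-cross-norm-Lipschitz operator} we obtain $g\cdot\phi\in\Lipa(X,E^*)$ with $\Lipa(g\cdot\phi)\leq\Lip(g)\left\|\phi\right\|$.

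For the reverse inequality I would combine Lemma \ref{lem-alpha-Lipschitz}, which yields $\Lip(g\cdot\phi)\leq\Lipa(g\cdot\phi)$, with the already-recalled identity $\Lip(g\cdot\phi)=\Lip(g)\left\|\phi\right\|$ from \cite[Lemma 1.5]{ccjv}. These two facts together force $\Lip(g)\left\|\phi\right\|\leq\Lipa(g\cdot\phi)$, and hence the desired equality $\Lipa(g\cdot\phi)=\Lip(g)\left\|\phi\right\|$. For the consequence, every $f\in\LipoF(X,E^*)$ admits a representation $f=\sum_{j=1}^m g_j\cdot\phi_j$ with $g_j\in X^\#$ and $\phi_j\in E^*$; since each summand lies in $\Lipa(X,E^*)$ by the first part and $\Lipa(X,E^*)$ is a linear space by Lemma \ref{lem-Lipa-norm-space}, it follows at once that $f\in\Lipa(X,E^*)$.

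There is essentially no hard step here: the entire content lies in the observation that the dualizability condition is tailored precisely to control functionals of the form $g\boxtimes\phi$, which is exactly the functional induced by $g\cdot\phi$. The only point meriting minimal care is the lower bound, where one should invoke the external norm identity $\Lip(g\cdot\phi)=\Lip(g)\left\|\phi\right\|$ rather than attempting to reprove it, and then read off the equality from the sandwiching $\Lip(g)\left\|\phi\right\|=\Lip(g\cdot\phi)\leq\Lipa(g\cdot\phi)\leq\Lip(g)\left\|\phi\right\|$.
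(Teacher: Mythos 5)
Your proposal is correct and takes essentially the same approach as the paper: the identification $u(g\cdot\phi)=(g\boxtimes\phi)(u)$ bounded by $\Lip(g)\left\|\phi\right\|\alpha(u)$, the reverse inequality by sandwiching via Lemma \ref{lem-alpha-Lipschitz} and the identity $\Lip(g\cdot\phi)=\Lip(g)\left\|\phi\right\|$, and the inclusion $\LipoF(X,E^*)\subset\Lipa(X,E^*)$ by linearity. The only cosmetic difference is that the paper obtains the key inequality by passing through the Lipschitz injective norm (using that $\varepsilon$ is the least dualizable cross-norm, so $\varepsilon\leq\alpha$), whereas you invoke the dualizability inequality verbatim; by the paper's own characterization these are the same estimate.
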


\begin{proof}
Let $g\in X^\#$ and $\phi\in E^*$. Since the Lipschitz injective norm $\varepsilon$ is the least dualizable Lipschitz cross-norm on $X\boxtimes E$ by \cite[Theorem 5.2]{ccjv}, we have 
\begin{align*}
\left|\sum_{i=1}^n\left\langle (g\cdot\phi)(x_i)-(g\cdot\phi)(y_i),e_i \right\rangle\right|
%&=\left|\sum_{i=1}^n\left\langle g(x_i)\phi-g(y_i)\phi,e_i\right\rangle\right|\\
&=\left|\sum_{i=1}^n\left(g(x_i)-g(y_i)\right)\left\langle\phi,e_i\right\rangle\right|\\
&\leq\Lip(g)\left\|\phi\right\|\varepsilon\left(\sum_{i=1}^n\delta_{(x_i,y_i)}\boxtimes e_i\right)\\
&\leq\Lip(g)\left\|\phi\right\|\alpha\left(\sum_{i=1}^n\delta_{(x_i,y_i)}\boxtimes e_i\right)
\end{align*}
for all $\sum_{i=1}^n\delta_{(x_i,y_i)}\boxtimes e_i\in X\boxtimes E$, and so $g\cdot\phi\in\Lipa(X,E^*)$ and $\Lipa(g\cdot\phi)\leq\Lip(g)||\phi||$. The converse inequality follows from Lemma \ref{lem-alpha-Lipschitz}. Since the Lipschitz operators $g\cdot\phi$ generate linearly the space 
$\LipoF(X,E^*)$ and $\Lipa(X,E^*)$ is a linear space, we conclude that $\LipoF(X,E^*)$ is contained in $\Lipa(X,E^*)$. %Finally, for all $x,y\in X$, $x\neq y$, and $e\in E$, $e\neq 0$, we obtain
%$$\frac{\left|g(x)-g(y)\right|}{d(x,y)}\frac{\left|\left\langle \phi,e\right\rangle\right|}{\left\|e\right\|}
%=\frac{\left|\left\langle (g\cdot\phi)(x)-(g\cdot\phi)(y),e\right\rangle\right|}{d(x,y)\left\|e\right\|}
%\leq\frac{\Lipa(g\cdot\phi)\alpha\left(\delta_{(x,y)}\boxtimes e\right)}{\alpha\left(\delta_{(x,y)}\boxtimes e\right)}
%=\Lipa(g\cdot\phi),
%$$
%hence $\Lip(g)||\phi||\leq\Lipa(g\cdot\phi)$ and so $\Lipa(g\cdot\phi)=\Lip(g)||\phi||$, as required. 
\end{proof}

Let us recall (see \cite{jsv}) that a Lipschitz operator from $X$ into $E^*$ is said to be \emph{Lipschitz approximable} if it is the limit in the Lipschitz norm $\Lip$ of a sequence of Lipschitz finite-rank operators from $X$ to $E^*$. Since the Banach spaces $(\Lipo(X,E^*),\Lip)$ and $(\Lipp(X,E^*),\Lipp)$ coincide by Lemma \ref{lemma-Lipschitz-pi}, it is natural to introduce the following class of Lipschitz operators. 

\begin{definition}
Let $\alpha$ be a dualizable Lipschitz cross-norm on $X\boxtimes E$. A Lipschitz operator $f\in\Lipa(X,E^*)$ is said to be \emph{$\alpha$-Lipschitz approximable} if it is the limit in the $\alpha$-Lipschitz norm $\Lipa$ of a sequence of Lipschitz finite-rank operators from $X$ to $E^*$.  
\end{definition}

Therefore the space of all $\alpha$-Lipschitz approximable operators from $X$ into $E^*$, provided that $\alpha$ is a dualizable Lipschitz cross-norm on $X\boxtimes E$, is the closure of the space $\LipoF(X,E^*)$ in $\left(\Lipa(X,E^*),\Lipa\right)$. 

\begin{theorem}\label{teo-finite-range2}
Let $\alpha$ be a dualizable Lipschitz cross-norm on $X\boxtimes E$ and let $\alpha'$ be the Lipschitz norm associated of $\alpha$.
\begin{enumerate}
\item $\left(\LipoF(X,E^*),\Lipa\right)$ is isometrically isomorphic to $X^\#\boxast_{\alpha'}E^*$, via the map $K\colon X^\#\boxast_{\alpha'}E^*\to\LipoF(X,E^*)$ given by
$$
K\left(\sum_{j=1}^m g_j\boxtimes\phi_j\right)=\sum_{j=1}^m g_j\cdot\phi_j.
$$
\item The space of all $\alpha$-Lipschitz approximable operators from $X$ into $E^*$ is isometrically isomorphic to $X^\#\widehat{\boxast}_{\alpha'}E^*$.
\end{enumerate}
\end{theorem}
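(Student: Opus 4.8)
The plan is to prove part (i) by recognizing $K$ as the inverse of (the restriction to Lipschitz finite-rank operators of) the canonical monomorphism introduced in the proof of Theorem \ref{teo-dual}, and then to read off the isometry directly from the definition of the associated norm $\alpha'$; part (ii) will follow formally by passing to completions. First I would recall from the proof of Theorem \ref{teo-dual} the linear monomorphism $\widetilde{\Lambda}\colon\Lipo(X,E^*)\to(X\boxtimes E)'$ given by $\widetilde{\Lambda}(f)(u)=u(f)$ in the notation of Remark \ref{remark-def-cross-norm-Lipschitz operator}. On an elementary operator one computes $\widetilde{\Lambda}(g\cdot\phi)=g\boxtimes\phi$, since $(g\cdot\phi)(x)=g(x)\phi$ yields $u(g\cdot\phi)=\sum_i(g(x_i)-g(y_i))\langle\phi,e_i\rangle=(g\boxtimes\phi)(u)$. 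Because every $f\in\LipoF(X,E^*)$ is a finite sum $\sum_j g_j\cdot\phi_j$ and $\widetilde{\Lambda}$ is linear, $\widetilde{\Lambda}$ maps $\LipoF(X,E^*)$ onto $X^\#\boxast E^*$; being injective, its restriction $\widetilde{\Lambda}|_{\LipoF(X,E^*)}$ is a linear bijection onto $X^\#\boxast E^*$ whose inverse sends $g\boxtimes\phi$ to $g\cdot\phi$. This inverse is exactly $K$, so $K$ is a well-defined linear bijection and the representation-independence of the formula defining it is automatic from this identification.

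It then remains to check that $K$ is isometric. Fix $z=\sum_j g_j\boxtimes\phi_j\in X^\#\boxast E^*$ and put $f=K(z)$, so that $\widetilde{\Lambda}(f)=z$ and hence $u(f)=z(u)$ for every $u\in X\boxtimes E$. Using the formula for $\Lipa$ from Remark \ref{remark-def-cross-norm-Lipschitz operator} together with the definition of $\alpha'$, one obtains
\[
\Lipa(f)=\sup\{|u(f)|\colon u\in X\boxtimes E,\ \alpha(u)\le 1\}=\sup\{|z(u)|\colon u\in X\boxtimes E,\ \alpha(u)\le 1\}=\alpha'(z).
\]
That $f$ indeed lies in $\Lipa(X,E^*)$ is guaranteed by Theorem \ref{teo-finite-range}. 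Thus $K$ is a surjective linear isometry, proving (i).

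For part (ii), I would note that $(\Lipa(X,E^*),\Lipa)$ is a Banach space by Theorem \ref{teo-dual}, so the space of $\alpha$-Lipschitz approximable operators, being by definition the closure of $\LipoF(X,E^*)$ in $(\Lipa(X,E^*),\Lipa)$, is complete and is precisely the completion of the normed space $(\LipoF(X,E^*),\Lipa)$. By part (i), $K$ is a surjective linear isometry from $X^\#\boxast_{\alpha'}E^*$ onto $(\LipoF(X,E^*),\Lipa)$, and any such isometry extends uniquely to a surjective linear isometry between the completions; since the completion of $X^\#\boxast_{\alpha'}E^*$ is $X^\#\widehat{\boxast}_{\alpha'}E^*$ by definition, the extension of $K$ is the desired isometric isomorphism onto the space of $\alpha$-Lipschitz approximable operators. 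I do not expect a genuine obstacle here, as the argument is essentially bookkeeping built on Theorems \ref{teo-dual} and \ref{teo-finite-range}; the only points demanding care are the well-definedness of $K$, which I dispatch by realizing it as the inverse of the concrete monomorphism $\widetilde{\Lambda}$ rather than arguing representation-by-representation, and the identification in (ii) of the completion of $(\LipoF(X,E^*),\Lipa)$ with its closure inside the complete space $\Lipa(X,E^*)$.
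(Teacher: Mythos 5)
Your proof is correct and follows essentially the same route as the paper: show that $K$ is a well-defined linear bijection from $X^\#\boxast E^*$ onto $\LipoF(X,E^*)$, read off the isometry from the identity $u(K(z))=z(u)$ combined with the sup formulas for $\Lipa$ (Remark \ref{remark-def-cross-norm-Lipschitz operator}, applicable since Theorem \ref{teo-finite-range} puts $K(z)$ in $\Lipa(X,E^*)$) and for $\alpha'$, and then obtain (ii) by extending the isometry to completions, identifying the completion of $\left(\LipoF(X,E^*),\Lipa\right)$ with its closure in the Banach space $\Lipa(X,E^*)$. The only difference is cosmetic: where the paper cites \cite[Theorem 2.5]{ccjv} for the bijectivity of $K$, you rederive it (together with well-definedness) from the injectivity of the canonical monomorphism $\widetilde{\Lambda}$ appearing in the proof of Theorem \ref{teo-dual}.
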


\begin{proof}
By \cite[Theorem 2.5]{ccjv}, the map $K\colon X^\#\boxast E^*\to\LipoF(X,E^*)$, given by
$$
K\left(\sum_{j=1}^m g_j\boxtimes\phi_j\right)=\sum_{j=1}^m g_j\cdot\phi_j,
$$
is a linear bijection. For any $\sum_{j=1}^m g_j\boxtimes\phi_j\in X^\#\boxast E^*$, we have
\begin{align*}
\alpha'\left(\sum_{j=1}^m g_j\boxtimes\phi_j\right)
&=\sup\left\{\left|\left(\sum_{j=1}^m g_j\boxtimes\phi_j\right)\left(\sum_{i=1}^n \delta_{(x_i,y_i)}\boxtimes e_i\right)\right|
\colon \alpha\left(\sum_{i=1}^n \delta_{(x_i,y_i)}\boxtimes e_i\right)\leq 1\right\}\\
&=\sup\left\{\left|\sum_{i=1}^n\left\langle \left(\sum_{j=1}^m g_j\cdot\phi_j\right)(x_i)-\left(\sum_{j=1}^m g_j\cdot\phi_j\right)(y_i),e_i \right\rangle\right|
\colon \alpha\left(\sum_{i=1}^n \delta_{(x_i,y_i)}\boxtimes e_i\right)\leq 1\right\}\\
&=\Lipa\left(\sum_{j=1}^m g_j\cdot\phi_j\right),
\end{align*}
by using \cite[Lemmas 2.2 and 1.4]{ccjv} and Remark \ref{remark-def-cross-norm-Lipschitz operator}. Hence $K$ is an isometry from $X^\#\boxast_{\alpha'}E^*$ onto $(\LipoF(X,E^*),\Lipa)$ and this proves (i). Then (ii) follows from (i) by applying a known result of functional analysis. %Namely, by \cite[II.4 Theorem 4.1]{tay}, the isometric isomorphism $K$ from $X^\#\boxast_{\alpha'}E^*$ onto $\left(\LipoF(X,E^*),\Lipa\right)$ can be extended to an isometric isomorphism $\widetilde{K}$ from $X^\#\widehat{\boxast}_{\alpha'}E^*$ onto $\left(\overline{\LipoF}(X,E^*),\Lipa\right)$. See also \cite[1.9.1 Theorem]{meg}.
\end{proof}

From Theorems \ref{teo-dual} and \ref{teo-finite-range2}, we deduce the following consequence.

\begin{corollary}
Let $\alpha$ be a dualizable Lipschitz cross-norm on $X\boxtimes E$. Then $X^\#\widehat{\boxast}_{\alpha'}E^*$ is isometrically isomorphic to $\left(X\widehat{\boxtimes}_\alpha E\right)^*$ if and only if every $\alpha$-Lipschitz operator from $X$ to $E^*$ is $\alpha$-Lipschitz approximable.
\end{corollary}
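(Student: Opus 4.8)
The plan is to chain together the two isometric identifications we already have and then observe that the equivalence we want is exactly the statement that a certain canonical inclusion is onto. Concretely, Theorem \ref{teo-dual} gives an isometric isomorphism $\Lambda\colon\Lipa(X,E^*)\to(X\widehat{\boxtimes}_\alpha E)^*$, while Theorem \ref{teo-finite-range2}(ii) gives an isometric isomorphism between the space of $\alpha$-Lipschitz approximable operators and $X^\#\widehat{\boxast}_{\alpha'}E^*$. Since the space of $\alpha$-Lipschitz approximable operators is by definition the $\Lipa$-closure of $\LipoF(X,E^*)$ inside $\Lipa(X,E^*)$, and $\Lipa(X,E^*)$ is a Banach space (being a dual by Theorem \ref{teo-dual}), this approximable space is a \emph{closed} subspace of $\Lipa(X,E^*)$.

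First I would make the canonical embedding explicit. The composition $\Lambda$ restricted to $X^\#\widehat{\boxast}_{\alpha'}E^*$ realizes $X^\#\widehat{\boxast}_{\alpha'}E^*$ isometrically as the closed subspace of $(X\widehat{\boxtimes}_\alpha E)^*$ consisting of the images under $\Lambda$ of the $\alpha$-Lipschitz approximable operators. The point is that this composite map is precisely the canonical inclusion $J\colon X^\#\widehat{\boxast}_{\alpha'}E^*\to(X\widehat{\boxtimes}_\alpha E)^*$ coming from the fact, recalled in Section \ref{s1}, that $\alpha'$ is defined as the norm dual to $\alpha$, so that $X^\#\boxast_{\alpha'}E^*$ sits isometrically inside $(X\widehat{\boxtimes}_\alpha E)^*$ and hence its completion embeds isometrically too. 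I would verify the compatibility of the identifications by checking they agree on elementary tensors $g\boxtimes\phi$, where on one side $K(g\boxtimes\phi)=g\cdot\phi$ maps under $\Lambda$ to the functional $u\mapsto\sum_i(g(x_i)-g(y_i))\langle\phi,e_i\rangle$, and on the other side $g\boxtimes\phi$ acts on $X\boxtimes E$ by exactly the same formula. This matching on a dense generating set, together with isometry and continuity, forces the two maps to coincide.

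Once that diagram commutes, the equivalence is immediate. The map $J$ is an isometry onto its image, and its image is precisely $\Lambda$ applied to the space of $\alpha$-Lipschitz approximable operators. Therefore $J$ is surjective — equivalently $X^\#\widehat{\boxast}_{\alpha'}E^*$ is isometrically isomorphic to all of $(X\widehat{\boxtimes}_\alpha E)^*$ — if and only if $\Lambda$ maps the space of $\alpha$-Lipschitz approximable operators onto all of $(X\widehat{\boxtimes}_\alpha E)^*$, which (since $\Lambda$ is a bijection onto $(X\widehat{\boxtimes}_\alpha E)^*$) holds if and only if the space of $\alpha$-Lipschitz approximable operators equals all of $\Lipa(X,E^*)$. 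That is exactly the assertion that every $\alpha$-Lipschitz operator is $\alpha$-Lipschitz approximable, which is the statement to be proved.

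The main subtlety — which I expect to be the only real obstacle — is the bookkeeping needed to confirm that the two independently constructed isometries are genuinely the \emph{same} embedding, rather than merely two isometries with the same domain and codomain. One must be careful that the completion $X^\#\widehat{\boxast}_{\alpha'}E^*$ embeds into the dual in a way compatible with $\Lambda$; this rests on the density of $X^\#\boxast E^*$ in its completion and on the continuity of all maps involved, so that agreement on the dense set of finite sums $\sum_j g_j\boxtimes\phi_j$ propagates to the whole completion. No estimate beyond the isometry relations already established is required, so the argument is essentially formal once the identification of $J$ with the composite of the maps from Theorems \ref{teo-dual} and \ref{teo-finite-range2} is secured.
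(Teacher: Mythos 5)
Your proposal is correct and is precisely the argument the paper intends: the paper states this corollary as a direct consequence of Theorems \ref{teo-dual} and \ref{teo-finite-range2}, and your proof supplies exactly the implicit bookkeeping—chaining the two isometries, checking on elementary tensors $g\boxtimes\phi$ that the composite coincides with the canonical inclusion of $X^\#\widehat{\boxast}_{\alpha'}E^*$ into $(X\widehat{\boxtimes}_\alpha E)^*$, and reading off that surjectivity of this inclusion is equivalent to the approximable operators exhausting $\Lipa(X,E^*)$.
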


%\begin{proof}
%If $X^\#\widehat{\boxast}_{\alpha'}E^*\equiv\left(X\widehat{\boxtimes}_\alpha E\right)^*$, then $\Lipa(X,E^*)\equiv \overline{\LipoF}(X,E^*)$ by Theorems \ref{teo-dual} and \ref{teo-finite-range2}. Conversely, if $\Lipa(X,E^*)\subset\overline{\LipoF}(X,E^*)$, then $\Lipa(X,E^*)=\overline{\LipoF}(X,E^*)$ and the same theorems yield $\left(X\widehat{\boxtimes}_\alpha E\right)^*=X^\#\widehat{\boxast}_{\alpha'}E^*$. 
%\end{proof}

\section{Lipschitz operator Banach ideals}\label{lobi}

We now formalize the notion of an ideal of Lipschitz operators, 
with a definition inspired by the analogous one for linear operators between Banach spaces.

\begin{definition}\label{def-ideal}
A Banach ideal of Lipschitz operators (or simply a Lipschitz operator Banach ideal) from $X$ to $E^*$ is a linear subspace $A(X,E^*)$ of $\Lipo(X,E^*)$ equipped with a norm $||\cdot||_A$ with the following properties:
\begin{enumerate}
\item The Lipschitz rank-one operator $g\cdot\phi$ from $X$ to $E^*$ belongs to $A(X,E^*)$ for every $g\in X^\#$ and $\phi\in E^*$, and $||g\cdot\phi||_A\leq\Lip(g)||\phi||$.
\item $\left(A(X,E^*),||\cdot||_A\right)$ is a Banach space.
\item The ideal property: If $f\in A(X,E^*)$, $h\in\Lipo(X,X)$ and $S\in\L(E^*,E^*)$, then the composition $Sfh$ belongs to $A(X,E^*)$ and $||Sfh||_A\leq \left\|S\right\|\left\|f\right\|_A\Lip(h)$.
\end{enumerate}
\end{definition}

%OJO podemos dar la definicion para operadores de $X$ a $E$ y aportar los ejemplos de los operadores Lipschitz compactos y Lipschitz debilmente compactos.

Our aim is to study when $\Lipa(X,E^*)$ is a Lipschitz operator Banach ideal. We will need the following lemma.

\begin{lemma}\label{lem-Lipschitz-1}
Let $\alpha$ be a Lipschitz cross-norm on $X\boxtimes E$ and let $\sum_{i=1}^n\delta_{(x_i,y_i)}\boxtimes e_i\in X\boxtimes E$. Then there exists a $f\in\Lipa(X,E^*)$ such that $\Lipa(f)=1$ and $\sum_{i=1}^n\langle f(x_i)-f(y_i),e_i\rangle=\alpha\left(\sum_{i=1}^n\delta_{(x_i,y_i)}\boxtimes e_i\right)$.
\end{lemma}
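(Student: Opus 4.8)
The plan is to exhibit the functional $f$ by appealing directly to the duality established in Theorem~\ref{teo-dual}, which identifies $\Lipa(X,E^*)$ isometrically with $(X\widehat{\boxtimes}_\alpha E)^*$. Write $u=\sum_{i=1}^n\delta_{(x_i,y_i)}\boxtimes e_i$. If $u=0$ then $\alpha(u)=0$ and any $f$ with $\Lipa(f)=1$ works (such $f$ exist since $\Lipa$ is a genuine norm by Lemma~\ref{lem-Lipa-norm-space}), so I may assume $u\neq 0$, whence $\alpha(u)>0$. The element $u$ is a nonzero vector in the normed space $X\boxtimes_\alpha E$, and hence in its completion $X\widehat{\boxtimes}_\alpha E$.

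First I would invoke the Hahn--Banach theorem: for the nonzero vector $u\in X\widehat{\boxtimes}_\alpha E$, there exists a functional $\varphi\in(X\widehat{\boxtimes}_\alpha E)^*$ with $\n{\varphi}=1$ and $\varphi(u)=\alpha(u)$. Next I would transport $\varphi$ back to $\Lipa(X,E^*)$ using the isometric isomorphism $\Lambda$ of Theorem~\ref{teo-dual}: set $f=\Lambda^{-1}(\varphi)$. Since $\Lambda$ is an isometry, $\Lipa(f)=\n{\varphi}=1$. It remains to compute the pairing. By the explicit formula for $\Lambda$ in Theorem~\ref{teo-dual},
$$
\sum_{i=1}^n\langle f(x_i)-f(y_i),e_i\rangle=\Lambda(f)(u)=\varphi(u)=\alpha(u)=\alpha\left(\sum_{i=1}^n\delta_{(x_i,y_i)}\boxtimes e_i\right),
$$
which is exactly the desired identity.

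I do not expect any serious obstacle here; the statement is essentially a restatement of a norming-functional existence result, and the work has already been done in packaging the duality as an isometric isomorphism in Theorem~\ref{teo-dual}. The only point requiring a moment's care is the degenerate case $u=0$: there the prescribed value $\alpha(u)=0$ forces the pairing to vanish regardless of $f$, so one simply needs a single operator of $\alpha$-Lipschitz norm one, whose existence follows because $\Lipa(X,E^*)\neq\{0\}$ (as witnessed, for instance, by any nonzero Lipschitz rank-one operator $g\cdot\phi$, which lies in $\Lipa(X,E^*)$ with $\Lipa(g\cdot\phi)>0$ after normalizing). Alternatively one could avoid splitting into cases by applying Hahn--Banach to the functional $u\mapsto\alpha(u)\cdot\mathbf{1}$ on the one-dimensional span of $u$ when $u\neq 0$ and treating $u=0$ separately, which is the route I would take in the write-up.
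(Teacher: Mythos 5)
Your proposal is correct and follows essentially the same route as the paper: apply the Hahn--Banach theorem to obtain a norm-one functional $\varphi\in(X\widehat{\boxtimes}_\alpha E)^*$ norming $u$, then pull it back to $f=\Lambda^{-1}(\varphi)\in\Lipa(X,E^*)$ via the isometric isomorphism of Theorem~\ref{teo-dual}. Your separate treatment of the degenerate case $u=0$ is a small extra precaution that the paper omits, but otherwise the two arguments coincide.
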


\begin{proof}
By the Hahn--Banach theorem, there exists $\varphi\in (X\widehat{\boxtimes}_{\alpha}E)^*$ with $||\varphi||=1$ such that $\varphi\left(\sum_{i=1}^n\delta_{(x_i,y_i)}\boxtimes e_i\right)=\alpha\left(\sum_{i=1}^n\delta_{(x_i,y_i)}\boxtimes e_i\right)$. By Theorem \ref{teo-dual}, there exists a function $\Lambda^{-1}(\varphi)\in\Lipa(X,E^*)$ such that $\Lipa(\Lambda^{-1}(\varphi))=||\varphi||$ and 
$\left(\sum_{i=1}^n\delta_{(x_i,y_i)}\boxtimes e_i\right)(\Lambda^{-1}(\varphi))=\varphi\left(\sum_{i=1}^n\delta_{(x_i,y_i)}\boxtimes e_i\right)$. Take $f=\Lambda^{-1}(\varphi)$ and the lemma follows. 
\end{proof}

\begin{theorem}\label{teo-caracter-ideal}
Let $\alpha$ be a Lipschitz cross-norm on $X\boxtimes E$. Then:
\begin{enumerate}
\item If $\Lipa(X,E^*)$ is a Lipschitz operator Banach ideal, then $\alpha$ is uniform.  
\item If $\alpha$ is uniform and $E$ is a reflexive Banach space, then $\Lipa(X,E^*)$ is a Lipschitz operator Banach ideal.
\end{enumerate}
\end{theorem}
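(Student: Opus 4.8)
The plan is to read both implications through the duality of Theorem \ref{teo-dual}, which lets me translate statements about the composition $Sfh$ into statements about the transformed tensor $(h\boxtimes T)u$; the bridge is the adjoint relation $\pair{S\psi}{e}=\pair{\psi}{Te}$ connecting an operator $T\in\L(E,E)$ with its adjoint $S=T^*\in\L(E^*,E^*)$. Concretely, for $f\in\Lipo(X,E^*)$, $h\in\Lipo(X,X)$, $T\in\L(E,E)$ and $u=\sum_{i}\delta_{(x_i,y_i)}\boxtimes e_i$, I would first record the identity $u(T^*fh)=\big((h\boxtimes T)u\big)(f)$, obtained by moving $T^*$ across the pairing termwise and recognizing $\sum_i\pair{f(h(x_i))-f(h(y_i))}{Te_i}$ as $(h\boxtimes T)u$ evaluated at $f$.

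For (i), assume $\Lipa(X,E^*)$ is a Lipschitz operator Banach ideal and fix $h\in\Lipo(X,X)$, $T\in\L(E,E)$ and $u\in X\boxtimes E$. Put $v=(h\boxtimes T)u$. Using Lemma \ref{lem-Lipschitz-1} I would choose $f\in\Lipa(X,E^*)$ with $\Lipa(f)=1$ and $v(f)=\alpha(v)$. The identity above gives $\alpha(v)=v(f)=u(T^*fh)$, and since $T^*\in\L(E^*,E^*)$ the ideal property yields $T^*fh\in\Lipa(X,E^*)$ with $\Lipa(T^*fh)\le\n{T^*}\,\Lipa(f)\,\Lip(h)=\n{T}\Lip(h)$. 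Hence $\alpha(v)=\lvert u(T^*fh)\rvert\le\Lipa(T^*fh)\,\alpha(u)\le\n{T}\Lip(h)\,\alpha(u)$, which is exactly uniformity. This direction uses only the ordinary adjoint $T^*$, so no reflexivity is needed.

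For (ii), assume $\alpha$ uniform and $E$ reflexive, and verify the three conditions of Definition \ref{def-ideal}. Completeness is immediate from Theorem \ref{teo-dual}, which exhibits $\Lipa(X,E^*)$ as a dual space. The rank-one bound $\Lipa(g\cdot\phi)\le\Lip(g)\n{\phi}$ follows from Theorem \ref{teo-finite-range} once one knows $\alpha$ is dualizable, i.e.\ $\varepsilon\le\alpha$; a uniform Lipschitz cross-norm has this property, so the theorem applies and even gives equality. For the ideal property, fix $f\in\Lipa(X,E^*)$, $h\in\Lipo(X,X)$ and $S\in\L(E^*,E^*)$; here I would use reflexivity to produce a preadjoint $T:=\kappa_E^{-1}S^*\kappa_E\in\L(E,E)$, which satisfies $\pair{S\psi}{e}=\pair{\psi}{Te}$ and $\n{T}=\n{S}$. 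Running the identity the other way, $u(Sfh)=\big((h\boxtimes T)u\big)(f)$, so uniformity of $\alpha$ bounds $\lvert u(Sfh)\rvert\le\Lipa(f)\,\alpha\big((h\boxtimes T)u\big)\le\n{S}\Lip(h)\Lipa(f)\,\alpha(u)$. Thus $Sfh\in\Lipa(X,E^*)$ with $\Lipa(Sfh)\le\n{S}\Lipa(f)\Lip(h)$.

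The main obstacle is the asymmetry between $E$ and $E^*$ in the ideal property: uniformity is a statement about operators in $\L(E,E)$, whereas the ideal property acts through $S\in\L(E^*,E^*)$ on the target. In (i) this is harmless, since the adjoint map $T\mapsto T^*$ always lands in $\L(E^*,E^*)$; but in (ii) one must go backwards, realizing a prescribed $S$ as the adjoint of some $T\in\L(E,E)$ of the same norm, and this is possible precisely because $E$ is reflexive. That is exactly where the reflexivity hypothesis is spent. A secondary point to handle carefully is the rank-one bound, namely checking that uniform Lipschitz cross-norms are dualizable so that Theorem \ref{teo-finite-range} becomes available.
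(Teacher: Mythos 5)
Your argument follows the paper's proof essentially step by step. For (i) you make the same appeal to Lemma \ref{lem-Lipschitz-1} to produce a norm-one $f$ with $\bigl((h\boxtimes T)u\bigr)(f)=\alpha\bigl((h\boxtimes T)u\bigr)$, use the same identity $u(T^*fh)=\bigl((h\boxtimes T)u\bigr)(f)$, and apply the ideal property exactly as the paper does; for the ideal property in (ii) you use reflexivity to realize $S\in\L(E^*,E^*)$ as $T^*$ with $\|T\|=\|S\|$ and then invoke uniformity, again exactly as in the paper, with completeness coming from Theorem \ref{teo-dual}.

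The genuine gap is the assertion, which you need in order to get condition (i) of Definition \ref{def-ideal}, that every uniform Lipschitz cross-norm is dualizable. You offer no argument for this, and it is false. In the linear theory one proves that uniform cross-norms are reasonable by applying uniformity to rank-one operators in \emph{both} factors; in the Lipschitz setting this device exists only on the Banach-space side (take $T=\phi(\cdot)e_0$), whereas on the metric side uniformity quantifies only over $h\in\Lipo(X,X)$, and a general $g\in X^\#$ cannot be simulated by self-maps of $X$. Concretely, take $X=\{0,a,b\}$ with $d(a,0)=1$, $d(b,0)=3$, $d(a,b)=\tfrac{5}{2}$, and $E=\K=\R$, and identify $X\boxtimes\R$ with $\R^2$ via $(s,t)\leftrightarrow s\,\delta_{(a,0)}\boxtimes 1+t\,\delta_{(b,0)}\boxtimes 1$. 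Running through the nine maps of $\Lipo(X,X)$ (the operator part of uniformity is absorbed by homogeneity, since $\L(\R,\R)=\R$), uniformity is equivalent to the inequalities $\alpha(s,t)\ge\max\{|s|,\tfrac{5}{2}|t|,|s+t|\}$ and $\alpha(t,s)\le 3\alpha(s,t)$, while the Lipschitz cross-norm equalities read $\alpha(1,0)=1$, $\alpha(0,1)=3$, $\alpha(1,-1)=\tfrac{5}{2}$. The gauge of the hexagon with vertices $\pm(1,0)$, $\pm\bigl(\tfrac{2}{7},\tfrac{2}{7}\bigr)$, $\pm\bigl(\tfrac{2}{5},-\tfrac{2}{5}\bigr)$ satisfies all of these constraints (routine verifications on the vertices), yet $\alpha(1,1)=\tfrac{7}{2}$, whereas the $1$-Lipschitz function $g$ with $g(0)=0$, $g(a)=1$, $g(b)=3$ gives $\varepsilon(1,1)\ge 4$. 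So this $\alpha$ is uniform but not dualizable; for it the rank-one map $g\cdot 1$ satisfies $\Lipa(g\cdot 1)\ge\tfrac{8}{7}>1=\Lip(g)$, so condition (i) of Definition \ref{def-ideal} fails and $\Lipa(X,\R)$ is not a Lipschitz operator Banach ideal. This independence of the two notions is exactly why the paper elsewhere always assumes ``uniform and dualizable'' together (Theorem \ref{theo-finite-ideal}, Definition \ref{defn-generic-cross-norm}).

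You should know that the paper's own proof of (ii) is vulnerable at precisely the same point: it obtains condition (i) of Definition \ref{def-ideal} by citing Theorem \ref{teo-finite-range}, whose hypothesis is dualizability, without deriving that from uniformity --- and by the example above one cannot. Since condition (i) of Definition \ref{def-ideal} for $\Lipa(X,E^*)$ is literally the statement that $\alpha$ is dualizable, part (ii) should carry ``dualizable'' as an explicit additional hypothesis. Your instinct to flag the missing hypothesis was exactly right; the flaw is in discharging it by an unproved (and false) implication rather than by adding it as an assumption.
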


\begin{proof}
(i) Assume that $\Lipa(X,E^*)$ is a Lipschitz operator Banach ideal. Fix $\sum_{i=1}^n \delta_{(x_i,y_i)}\boxtimes e_i\in X\boxtimes E$ and let $h\in\Lipo(X,X)$ and $T\in\L(E,E)$. By Lemma \ref{lem-Lipschitz-1}, there exists $f\in\Lipa(X,E^*)$ with $\Lipa(f)=1$ such that  
$$
\sum_{i=1}^n\left\langle f(h(x_i))-f(h(y_i)),T(e_i)\right\rangle
=\alpha\left(\sum_{i=1}^n\delta_{(h(x_i),h(y_i))}\boxtimes T(e_i)\right),
$$
that is
$$
\sum_{i=1}^n\left\langle T^*fh(x_i)-T^*fh(y_i),e_i\right\rangle
=\alpha\left(\sum_{i=1}^n\delta_{(h(x_i),h(y_i))}\boxtimes T(e_i)\right),
$$
where $T^*$ denotes the adjoint operator of $T$.
%, because 
%$$
%\left\langle f(h(x_i))-f(h(y_i)),T(e_i)\right\rangle
%=\left\langle T^*\left(f(h(x_i))-f(h(y_i))\right),e_i\right\rangle
%=\left\langle T^*\left(f(h(x_i))\right)-T^*\left(f(h(y_i))\right),e_i\right\rangle
%=\left\langle T^*fh(x_i)-T^*fh(y_i),e_i\right\rangle
%$$
Since $\Lipa(X,E^*)$ has the ideal property, then $T^*fh$ belongs to $\Lipa(X,E^*)$ and $\Lipa(T^*fh)\leq ||T^*||\Lipa(f)\Lip(h)$. Then we have
\begin{align*}
%\alpha\left((h\boxtimes T)\left(\sum_{i=1}^n \delta_{(x_i,y_i)}\boxtimes e_i\right)\right)&=
\alpha\left(\sum_{i=1}^n\delta_{(h(x_i),h(y_i))}\boxtimes T(e_i)\right)
%&=\left|\sum_{i=1}^n\left\langle T^*fh(x_i)-T^*fh(y_i),e_i\right\rangle\right|\\
&\leq\Lipa(T^*fh)\alpha\left(\sum_{i=1}^n \delta_{(x_i,y_i)}\boxtimes e_i\right)\\
&\leq\left\|T\right\|\Lip(h)\alpha\left(\sum_{i=1}^n \delta_{(x_i,y_i)}\boxtimes e_i\right),
\end{align*}
and so $\alpha$ is uniform. 

(ii) Notice that $\Lipa(X,E^*)$ is a linear subspace of $\Lipo(X,E^*$) and $(\Lipa(X,E^*),\Lipa)$ is a normed space which satisfies the conditions (i) and (ii) of Definition \ref{def-ideal} by Lemmas \ref{lem-alpha-Lipschitz} and \ref{lem-Lipa-norm-space} and Theorems \ref{teo-dual} and \ref{teo-finite-range}. Assume that $\alpha$ is uniform and $E$ is reflexive. We only need to prove that $\Lipa(X,E^*)$ has the ideal property. Let $f\in\Lipa(X,E^*)$, $h\in\Lipo(X,X)$ and $S\in\L(E^*,E^*)$. Since $E$ is reflexive, there exists $T\in\L(E,E)$ such that $T^*=S$ and $||T||=||S||$. %by \cite[p. 294]{meg}. 
For every $\sum_{i=1}^n \delta_{(x_i,y_i)}\boxtimes e_i\in X\boxtimes E$, we have  
\begin{align*}
\left|\sum_{i=1}^n\left\langle Sfh(x_i)-Sfh(y_i),e_i\right\rangle\right|
&=\left|\sum_{i=1}^n\left\langle fh(x_i)-fh(y_i),T(e_i)\right\rangle\right|\\
&\leq\Lipa(f)\alpha\left(\sum_{i=1}^n \delta_{(h(x_i),h(y_i))}\boxtimes T(e_i)\right)\\
&\leq\Lipa(f)\Lip(h)\left\|T\right\|\alpha\left(\sum_{i=1}^n \delta_{(x_i,y_i)}\boxtimes e_i\right).
\end{align*} 
%In the last inequality we have used that $\alpha$ is uniform. By Definition \ref{def-cross-norm-Lipschitz operator}, 
It follows that $Sfh$ is in $\Lipa(X,E^*)$ and $\Lipa(Sfh)\leq ||S||\Lipa(f)\Lip(h)$. This completes the proof.
\end{proof}

Theorem \ref{teo-caracter-ideal} shows that there is an ``almost equivalence'' between the uniformity of $\alpha$ and $\Lipa$ being an ideal.
The situation will be cleaner in Section \ref{rtmlobi}, when we consider tensor norms and ideals defined not just for a fixed metric space and a fixed Banach space, but rather for all pairs of such spaces.

We now study when $\alpha$-Lipschitz approximable operators from $X$ into $E^*$ form a Lipschitz operator Banach ideal.

\begin{theorem}\label{theo-finite-ideal}
Let $\alpha$ be a uniform and dualizable Lipschitz cross-norm on $X\boxtimes E$. Assume that $E$ is a reflexive Banach space. Then $\left(\overline{\LipoF}(X,E^*),\Lipa\right)$ is a Lipschitz operator Banach ideal. 
\end{theorem}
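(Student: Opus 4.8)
The plan is to show that $\left(\overline{\LipoF}(X,E^*),\Lipa\right)$, the space of $\alpha$-Lipschitz approximable operators, satisfies the three conditions in Definition \ref{def-ideal}. The first two conditions are essentially immediate from the work already done: condition (i), containment of the rank-one operators $g\cdot\phi$ together with the norm estimate $\Lipa(g\cdot\phi)=\Lip(g)\n{\phi}$, is exactly Theorem \ref{teo-finite-range} (which applies since $\alpha$ is dualizable), and these operators are trivially in the closure $\overline{\LipoF}(X,E^*)$; condition (ii), completeness, holds because by definition this space is the closure of $\LipoF(X,E^*)$ inside the Banach space $\left(\Lipa(X,E^*),\Lipa\right)$ (which is complete by Theorem \ref{teo-dual}), and a closed subspace of a Banach space is a Banach space. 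So the real content is the ideal property (iii).

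For the ideal property, I would first establish it on the dense subspace $\LipoF(X,E^*)$ and then pass to the closure. Fix $f\in\overline{\LipoF}(X,E^*)$, $h\in\Lipo(X,X)$ and $S\in\L(E^*,E^*)$. The key observation is that composition $f\mapsto Sfh$ maps Lipschitz finite-rank operators to Lipschitz finite-rank operators: if $f=\sum_{j=1}^m g_j\cdot\phi_j$ then $Sfh=\sum_{j=1}^m (g_j\circ h)\cdot(S\phi_j)$, where each $g_j\circ h\in X^\#$ since $h$ is a base-point preserving Lipschitz self-map; hence $Sfh\in\LipoF(X,E^*)$ whenever $f\in\LipoF(X,E^*)$. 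Next I would prove the norm estimate $\Lipa(Sfh)\leq\n{S}\,\Lipa(f)\,\Lip(h)$ for \emph{all} $f\in\Lipa(X,E^*)$; this is precisely the computation carried out in the proof of Theorem \ref{teo-caracter-ideal}(ii), which uses reflexivity of $E$ to find $T\in\L(E,E)$ with $T^*=S$ and $\n{T}=\n{S}$, and then invokes the uniformity of $\alpha$. Indeed, since $\alpha$ is uniform and $E$ is reflexive, Theorem \ref{teo-caracter-ideal}(ii) already tells us that $\Lipa(X,E^*)$ itself is a Lipschitz operator Banach ideal, so in particular the bound $\Lipa(Sfh)\leq\n{S}\,\Lipa(f)\,\Lip(h)$ holds on the whole of $\Lipa(X,E^*)$.

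It then remains to check that the map $f\mapsto Sfh$ carries $\overline{\LipoF}(X,E^*)$ into itself, which follows by continuity: given $f\in\overline{\LipoF}(X,E^*)$, choose a sequence $f_k\in\LipoF(X,E^*)$ with $\Lipa(f-f_k)\to 0$. By the boundedness estimate, $\Lipa(Sfh-Sf_kh)=\Lipa(S(f-f_k)h)\leq\n{S}\,\Lipa(f-f_k)\,\Lip(h)\to 0$, so $Sf_kh\to Sfh$ in the $\alpha$-Lipschitz norm. Since each $Sf_kh\in\LipoF(X,E^*)$ by the first observation, the limit $Sfh$ lies in the closure $\overline{\LipoF}(X,E^*)$. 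Combined with the norm bound, this verifies condition (iii), completing the proof.

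I do not anticipate a genuine obstacle here, since the heavy lifting — the norm estimate via reflexivity and uniformity — has already been done in Theorem \ref{teo-caracter-ideal}(ii), and Theorem \ref{teo-finite-range} supplies condition (i). The only point requiring a little care is the verification that $Sfh\in\LipoF(X,E^*)$ when $f$ is Lipschitz finite-rank, namely recognizing that $(g_j\circ h)\cdot(S\phi_j)$ is again a rank-one Lipschitz operator; this is where the structural hypotheses $h\in\Lipo(X,X)$ and $S\in\L(E^*,E^*)$ are used to keep us inside the finite-rank class, so that the closure is genuinely invariant rather than merely the ambient space $\Lipa(X,E^*)$.
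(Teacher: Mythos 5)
Your proposal is correct and follows essentially the same route as the paper: conditions (i) and (ii) from Theorems \ref{teo-finite-range} and \ref{teo-dual}, the ideal property first on $\LipoF(X,E^*)$ and then on its closure by the continuity estimate $\Lipa(S(f-f_k)h)\leq\n{S}\,\Lipa(f-f_k)\,\Lip(h)$. The only cosmetic differences are that you verify finite-rank invariance via the explicit decomposition $Sfh=\sum_j (g_j\circ h)\cdot(S\phi_j)$ where the paper uses $\lin(Sfh(X))\subset S(\lin(f(X)))$, and that you cite Theorem \ref{teo-caracter-ideal}(ii) for the norm bound where the paper repeats its computation.
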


\begin{proof}
We first show that $\left(\LipoF(X,E^*),\Lipa\right)$ has the ideal property. Let $f\in\LipoF(X,E^*)$, $h\in\Lipo(X,X)$ and $S\in\L(E^*,E^*)$. Since $\lin(Sfh(X))=S(\lin(fh(X)))\subset S(\lin(f(X)))$, we infer that $Sfh\in\LipoF(X,E^*)$. The inequality $\Lipa(Sfh)\leq ||S||\Lipa(f)\Lip(h)$ follows similarly as in the proof of the assertion (ii) of Theorem \ref{teo-caracter-ideal}.

By Theorems \ref{teo-finite-range} and \ref{teo-dual}, $\left(\overline{\LipoF}(X,E^*),\Lipa\right)$ satisfies the conditions (i) and (ii) of Definition \ref{def-ideal}. In order to prove that it has the ideal property, let $f\in\overline{\LipoF}(X,E^*)$, $h\in\Lipo(X,X)$ and $S\in\L(E^*,E^*)$. Then we can take a sequence $\{f_n\}$ in $\LipoF(X,E^*)$ such that $\Lipa(f_n-f)\to 0$. Then $\Lipa(Sf_nh-Sfh)\to 0$ since 
$$
\Lipa(Sf_nh-Sfh)=\Lipa(S(f_n-f)h)\leq\left\|S\right\|\Lipa(f_n-f)\Lip(h)
$$ 
for all $n\in\N$. Hence $Sfh\in\overline{\LipoF}(X,E^*)$. Since $\Lipa(Sf_nh)\leq ||S||\Lipa(f_n)\Lip(h)$ for all $n\in\N$, we deduce that $\Lipa(Sfh)\leq ||S||\Lipa(f)\Lip(h)$, and the theorem follows.
\end{proof}

\section{Lipschitz operator Banach spaces}\label{lobs}

In Theorem \ref{teo-dual} we have characterized $\Lipa(X,E^*)$ as the dual space $(X\widehat{\boxtimes}_\alpha E)^*$. Our aim in this section is to tackle the general duality problem as to when a space of maps from $X$ to $E^*$ is isometrically isomorphic to $(X\widehat{\boxtimes}_\alpha E)^*$ for some Lipschitz cross-norm $\alpha$, regardless of whether or not one has an ideal property. For that purpose, we first introduce Banach spaces of Lipschitz operators. %OJO We could define for subspaces of $\Lipo(X,E)$.

\begin{definition}\label{def-Lipschitz operator Banach space}
A Banach space of Lipschitz operators (or simply a Lipschitz operator Banach space) from $X$ to $E^*$ is a linear subspace $B(X,E^*)$ of $\Lipo(X,E^*)$ equipped with a norm $||\cdot||_B$ having the following properties:
\begin{enumerate}
\item $\left(B(X,E^*),||\cdot||_B\right)$ is a Banach space.
\item $||f||_B\geq\Lip(f)$ for every $f\in B(X,E^*)$.
\item For every $g\in X^\#$ and $\phi\in E^*$, the map $g\cdot\phi$ belongs to $B(X,E^*)$ and $||g\cdot\phi||_B=\Lip(g)||\phi||$.
\end{enumerate}
\end{definition}

We first characterize all Lipschitz cross-norms $\alpha$ on $X\boxtimes E$ for which $\Lipa(X,E^*)$ is a Lipschitz operator Banach space.

\begin{theorem}\label{teo-caracter-Lipschitz operator Banach space}
Let $\alpha$ be a Lipschitz cross-norm on $X\boxtimes E$. Then $\Lipa(X,E^*)$ is a Lipschitz operator Banach space if and only if $\alpha$ is dualizable.
\end{theorem}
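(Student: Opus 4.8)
The plan is to observe that conditions (i) and (ii) in Definition \ref{def-Lipschitz operator Banach space} hold for $(\Lipa(X,E^*),\Lipa)$ regardless of any extra hypothesis on $\alpha$, so that the whole statement collapses onto the equivalence of condition (iii) with dualizability. Indeed, condition (i) is immediate from Theorem \ref{teo-dual}, which identifies $\Lipa(X,E^*)$ isometrically with $(X\widehat{\boxtimes}_\alpha E)^*$ and thereby makes it a Banach space; condition (ii), namely $\Lipa(f)\geq\Lip(f)$ for every $f\in\Lipa(X,E^*)$, is exactly Lemma \ref{lem-alpha-Lipschitz}. Hence $\Lipa(X,E^*)$ is a Lipschitz operator Banach space if and only if it satisfies condition (iii): for all $g\in X^\#$ and $\phi\in E^*$ one has $g\cdot\phi\in\Lipa(X,E^*)$ with $\Lipa(g\cdot\phi)=\Lip(g)\|\phi\|$.

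It then remains to show that condition (iii) is equivalent to $\alpha$ being dualizable. The implication from dualizability to (iii) is already available: it is precisely the content of Theorem \ref{teo-finite-range}, which asserts that when $\alpha$ is dualizable every rank-one map $g\cdot\phi$ lies in $\Lipa(X,E^*)$ with $\Lipa(g\cdot\phi)=\Lip(g)\|\phi\|$.

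For the converse I would argue directly from the definitions. Assuming (iii), fix $g\in X^\#$, $\phi\in E^*$ and an arbitrary $u=\sum_{i=1}^n\delta_{(x_i,y_i)}\boxtimes e_i\in X\boxtimes E$. Since $g\cdot\phi\in\Lipa(X,E^*)$, the defining inequality of an $\alpha$-Lipschitz operator (via Remark \ref{remark-def-cross-norm-Lipschitz operator}) gives $|u(g\cdot\phi)|\leq\Lipa(g\cdot\phi)\alpha(u)$. Now $(g\cdot\phi)(x)=g(x)\phi$, so that $u(g\cdot\phi)=\sum_{i=1}^n\langle(g\cdot\phi)(x_i)-(g\cdot\phi)(y_i),e_i\rangle=\sum_{i=1}^n(g(x_i)-g(y_i))\langle\phi,e_i\rangle$; substituting the value $\Lipa(g\cdot\phi)=\Lip(g)\|\phi\|$ furnished by (iii) yields exactly the dualizability inequality for $u$. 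As $g$, $\phi$ and $u$ were arbitrary, $\alpha$ is dualizable.

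I do not expect a genuine obstacle here: the theorem is essentially a repackaging of Theorem \ref{teo-finite-range} together with the short unwinding of the pairing $u(g\cdot\phi)$. The only point requiring care is the initial reduction, namely recognizing that conditions (i) and (ii) are automatic, so that the entire equivalence rests on condition (iii) alone.
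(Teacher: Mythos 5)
Your proposal is correct and takes essentially the same route as the paper's proof: it reduces the statement to condition (iii) of Definition \ref{def-Lipschitz operator Banach space} by noting that conditions (i) and (ii) hold automatically (via Theorem \ref{teo-dual}, Lemma \ref{lem-Lipa-norm-space} and Lemma \ref{lem-alpha-Lipschitz}), cites Theorem \ref{teo-finite-range} for the implication from dualizability to (iii), and for the converse unwinds the pairing $u(g\cdot\phi)=\sum_{i=1}^n\left(g(x_i)-g(y_i)\right)\left\langle\phi,e_i\right\rangle$ together with $\Lipa(g\cdot\phi)=\Lip(g)\left\|\phi\right\|$ to obtain exactly the dualizability inequality. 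This matches the paper's argument step for step, with no gaps.
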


\begin{proof}
In view of Lemmas \ref{lem-Lipa-norm-space} and \ref{lem-alpha-Lipschitz} and Theorem \ref{teo-dual}, $\Lipa(X,E^*)$ is a linear subspace of $\Lipo(X,E^*)$ and $\left(\Lipa(X,E^*),\Lipa\right)$ is a normed space satisfying assumptions (i) and (ii) of Definition \ref{def-Lipschitz operator Banach space}. Hence we only need to prove that $\Lipa(X,E^*)$ satisfies condition (iii) if and only if $\alpha$ is dualizable. 

If $\alpha$ is dualizable, then $\Lipa(X,E^*)$ has the property (iii) by Theorem \ref{teo-finite-range}. Conversely, assume that every map $g\cdot\phi\colon X\to E^*$, with $g\in X^\#$ and $\phi\in E^*$, is in $\Lipa(X,E^*)$ and $\Lipa(g\cdot\phi)=\Lip(g)||\phi||$. Take $g\in X^\#$ and $\phi\in E^*$, and since
\begin{align*}
%\left|(g\boxtimes\phi)\left(\sum_{i=1}^n \delta_{(x_i,y_i)}\boxtimes e_i\right)\right|&=
\left|\sum_{i=1}^n (g(x_i)-g(y_i))\left\langle\phi,e_i\right\rangle\right|
&=\left|\sum_{i=1}^n\left\langle(g\cdot\phi)(x_i)-(g\cdot\phi)(y_i),e_i\right\rangle\right|\\
&\leq\Lipa(g\cdot\phi)\alpha\left(\sum_{i=1}^n \delta_{(x_i,y_i)}\boxtimes e_i\right)\\
&=\Lip(g)\left\|\phi\right\|\alpha\left(\sum_{i=1}^n \delta_{(x_i,y_i)}\boxtimes e_i\right)
\end{align*}
for all $\sum_{i=1}^n \delta_{(x_i,y_i)}\boxtimes e_i\in X\boxtimes E$, then $\alpha$ is dualizable.
\end{proof}

Since $\pi$, $\varepsilon$, $d_p$ and $w_p$ for $p\in [1,\infty]$ are dualizable Lipschitz cross-norms on $X\boxtimes E$, Theorem \ref{teo-caracter-Lipschitz operator Banach space} gives the following.

\begin{corollary}
The spaces $\Lipa(X,E^*)$ for $\alpha=\pi,\varepsilon ,d_p,w_p$ with $1\leq p\leq\infty$ are Lipschitz operator Banach spaces.
\end{corollary}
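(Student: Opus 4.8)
The plan is to reduce the entire statement to the characterization just established in Theorem \ref{teo-caracter-Lipschitz operator Banach space}, which says that $\Lipa(X,E^*)$ is a Lipschitz operator Banach space \emph{if and only if} $\alpha$ is dualizable. Consequently, the corollary contains no independent content beyond verifying that each of the four norms $\pi$, $\varepsilon$, $d_p$, and $w_p$ (for $1\leq p\leq\infty$) is a dualizable Lipschitz cross-norm on $X\boxtimes E$. Once that is in place, one simply invokes the theorem four times.

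For $\varepsilon$, $\pi$ and $d_p$ the dualizability is already recorded in Section \ref{s1}: the preliminary facts imported from \cite{ccjv} state explicitly that these are uniform and dualizable Lipschitz cross-norms. So for these three I would just cite that. A clean alternative, if one prefers a self-contained justification, is to use the sandwich criterion also recalled there, namely that a norm $\alpha$ on $X\boxtimes E$ is a dualizable Lipschitz cross-norm precisely when $\varepsilon\leq\alpha\leq\pi$; since $\varepsilon$ is the least and $\pi$ the greatest such norm, and $d_p$ lies between them, all three pass the test immediately.

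The only genuinely nontrivial verification concerns $w_p$. As noted right after its definition, the dualizability follows by extending to general $p$ the arguments given for $p=2$ in \cite{CD-Lipschitz-factorization}, so the shortest route is to cite that. If one wished to spell it out, the step I would expect to demand the most care is the direct estimate: fixing $g\in X^\#$ and $\phi\in E^*$ together with an admissible representation $u=\sum_{i}\lambda_i\delta_{(y_i,y_i')}\boxtimes e_i$ satisfying $(\lambda_i,y_i,y_i')_{i}\prec_{p'}(\mu_j,x_j,x_j')_{j}$, one bounds $\left|(g\boxtimes\phi)(u)\right|$ by applying H\"older's inequality with exponents $p,p'$ to the sum $\sum_i \lambda_i[g(y_i)-g(y_i')]\langle\phi,e_i\rangle$, controls the $\ell_{p'}$-factor of the $g$-terms using the comparison relation $\prec_{p'}$ against the $(\mu_j,x_j,x_j')$ data, and finally passes to the infimum over all such representations. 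This yields exactly $\left|(g\boxtimes\phi)(u)\right|\leq\Lip(g)\left\|\phi\right\|\,w_p(u)$, which is the defining inequality of dualizability. With the dualizability of all four norms established, Theorem \ref{teo-caracter-Lipschitz operator Banach space} delivers the conclusion in each case, finishing the proof.
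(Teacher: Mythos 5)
Your proposal is correct and follows exactly the paper's own route: the paper proves this corollary in one line by citing that $\pi$, $\varepsilon$, $d_p$ and $w_p$ are dualizable Lipschitz cross-norms (facts recorded in Section \ref{s1} and after the definition of $w_p$) and then invoking Theorem \ref{teo-caracter-Lipschitz operator Banach space}. Your additional sketch of the H\"older-plus-$\prec_{p'}$ estimate establishing dualizability of $w_p$ is accurate, but it is supplementary detail the paper delegates to \cite{CD-Lipschitz-factorization} rather than a divergence in approach.
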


%Theorem \ref{teo-dual} asserts that $\Lipa(X,E^*)$ may be identified with $(X\widehat{\boxtimes}_\alpha E)^*$ where $\alpha$ is a Lipschitz cross-norm on $X\boxtimes E$. 

Conversely, we will now address the problem of when a Lipschitz operator Banach space can be canonically isometrically identified with the dual of a Lipschitz tensor product endowed with a Lipschitz cross-norm. We begin with the following lemma. 

\begin{lemma}\label{lem-end}
Let $B(X,E^*)$ be a Lipschitz operator Banach space. For each $u=\sum_{i=1}^n \delta_{(x_i,y_i)}\boxtimes e_i\in X\boxtimes E$, define  
$$
\alpha(u)
=\sup\left\{\left|\sum_{i=1}^n\left\langle f(x_i)-f(y_i),e_i\right\rangle\right|\colon f\in B(X,E^*),\; \left\|f\right\|_B=1\right\}
$$ 
and 
$$
\left\langle i(u),f\right\rangle
%=\left\langle i\left(\sum_{i=1}^n\delta_{(x_i,y_i)}\boxtimes e_i\right),f\right\rangle
=\sum_{i=1}^n\left\langle f(x_i)-f(y_i),e_i\right\rangle
%=\left\langle\sum_{i=1}^n\delta_{(x_i,y_i)}\boxtimes e_i,f\right\rangle
\qquad \left(f\in B(X,E^*)\right).
$$
Then $\alpha$ is a dualizable Lipschitz cross-norm on $X\boxtimes E$ and $i$ is a linear isometry from $X\boxtimes_\alpha E$ into $B(X,E^*)^*$.
\end{lemma}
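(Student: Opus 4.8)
The plan is to verify in turn the three things asserted: that $\alpha$ is a Lipschitz cross-norm, that it is dualizable, and that $i$ is a linear isometry onto its image in $B(X,E^*)^*$. I would begin by noting that $\alpha(u)$ is exactly the supremum of $|\langle i(u),f\rangle|$ over the unit sphere of $B(X,E^*)$, so $\alpha(u)=\|i(u)\|_{B(X,E^*)^*}$ by definition of the dual norm, \emph{provided} $i(u)$ is well defined and bounded. The map $u\mapsto i(u)$ is patently linear on $X\boxtimes E$ (it is the restriction to $B(X,E^*)$ of the pairing $\widetilde\Lambda$ used in Theorem \ref{teo-dual}), and each $f\in B(X,E^*)$ is Lipschitz with $\Lip(f)\le\|f\|_B$ by condition (ii), so $|\langle i(u),f\rangle|=|u(f)|\le\Lip(f)\pi(u)\le\|f\|_B\,\pi(u)$; hence $i(u)$ is a bounded functional on $B(X,E^*)$ with $\|i(u)\|\le\pi(u)<\infty$, and $\alpha(u)$ is finite. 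Thus the identity $\alpha(u)=\|i(u)\|_{B(X,E^*)^*}$ holds, which immediately gives that $\alpha$ is a seminorm (it inherits the triangle inequality and homogeneity from the dual norm via the linear map $i$).

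Next I would compute $\alpha$ on elementary tensors to confirm the cross-norm property. Fix $(x,y)\in X^2$ and $e\in E$. On one hand, for any $f\in B(X,E^*)$ with $\|f\|_B=1$ we have $|\langle f(x)-f(y),e\rangle|\le\|f(x)-f(y)\|\,\|e\|\le\Lip(f)d(x,y)\|e\|\le d(x,y)\|e\|$, so $\alpha(\delta_{(x,y)}\boxtimes e)\le d(x,y)\|e\|$. For the reverse inequality I use condition (iii): choosing a suitable $g\in X^\#$ with $\Lip(g)=1$ and $g(x)-g(y)$ close to $d(x,y)$ (via the Lipschitz functional $t\mapsto d(t,y)-d(0,y)$, normalized), and $\phi\in S_{E^*}$ with $\langle\phi,e\rangle$ close to $\|e\|$, the rank-one map $g\cdot\phi$ lies in $B(X,E^*)$ with $\|g\cdot\phi\|_B=\Lip(g)\|\phi\|=1$ and pairs with $\delta_{(x,y)}\boxtimes e$ to give $(g(x)-g(y))\langle\phi,e\rangle$, which approaches $d(x,y)\|e\|$. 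Hence $\alpha(\delta_{(x,y)}\boxtimes e)=d(x,y)\|e\|$, so $\alpha$ is a Lipschitz cross-norm; in particular it is positive on nonzero elementary tensors, and since $\pi$ is a genuine norm dominating $\alpha$ is bounded below appropriately, $\alpha$ is in fact a norm (nondegeneracy follows as in Lemma \ref{lem-Lipa-norm-space}).

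For dualizability I must show $\varepsilon\le\alpha\le\pi$, or equivalently bound $|\sum_i(g(x_i)-g(y_i))\langle\phi,e_i\rangle|$ by $\Lip(g)\|\phi\|\,\alpha(u)$ for all $g\in X^\#$, $\phi\in E^*$. The clean way is again condition (iii): for $g\in X^\#$ and $\phi\in E^*$, not both zero, the normalized rank-one map $f_0=(g\cdot\phi)/(\Lip(g)\|\phi\|)$ satisfies $\|f_0\|_B=1$, so by the definition of $\alpha$,
\[
\left|\sum_{i=1}^n(g(x_i)-g(y_i))\langle\phi,e_i\rangle\right|
=\Lip(g)\|\phi\|\,\bigl|\langle i(u),f_0\rangle\bigr|
\le\Lip(g)\|\phi\|\,\alpha(u),
\]
which is precisely the dualizability inequality; the degenerate cases are trivial. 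Finally, the isometry statement is now immediate: $i$ is linear, and $\|i(u)\|_{B(X,E^*)^*}=\alpha(u)$ by the first step, so $i$ is an isometry of $X\boxtimes_\alpha E$ into $B(X,E^*)^*$. I expect the only genuinely delicate point to be the reverse estimate $\alpha(\delta_{(x,y)}\boxtimes e)\ge d(x,y)\|e\|$, where one must exhibit enough norm-one functionals in $B(X,E^*)$ to saturate the supremum; condition (iii) supplies exactly the rank-one maps $g\cdot\phi$ needed, and the standard Lipschitz functional realizing the distance makes the argument go through.
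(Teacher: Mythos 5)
Your proposal follows essentially the same route as the paper's proof: identify $\alpha(u)$ with the dual norm $\|i(u)\|_{B(X,E^*)^*}$ after bounding $|\langle i(u),f\rangle|$ by $\|f\|_B\,\pi(u)$; get the cross-norm property on elementary tensors from the rank-one maps $g\cdot\phi$ with $g(z)=d(z,y)-d(0,y)$ and a norm-attaining $\phi$; and get dualizability by testing $i(u)$ against the normalized rank-one map $(g\cdot\phi)/(\Lip(g)\|\phi\|)$, whose $B$-norm is exactly $1$ by condition (iii). All of these steps are correct and coincide with the paper's argument.

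The one weak spot is your justification that $\alpha$ is a \emph{norm} rather than merely a seminorm (equivalently, that $i$ is injective). The clause ``since $\pi$ is a genuine norm dominating $\alpha$'' proves nothing: being dominated by a norm gives no lower bound, so $\alpha\le\pi$ is irrelevant to nondegeneracy. Likewise, ``as in Lemma \ref{lem-Lipa-norm-space}'' does not literally apply: that argument, dualized, would require that a nonzero $u\in X\boxtimes E$ be detected by some $f$ \emph{in} $B(X,E^*)$, whereas a priori $u\neq 0$ only guarantees $u(f)\neq 0$ for some $f\in\Lipo(X,E^*)$, which need not lie in the subspace $B(X,E^*)$. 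The paper closes this gap by invoking the fact that the rank-one functionals $g\boxtimes\phi$ already separate the points of $X\boxtimes E$ (\cite[Proposition 1.6]{ccjv}), and these are induced by maps $g\cdot\phi\in B(X,E^*)$; this is exactly why $i$ is injective. Fortunately, your own third paragraph supplies an equally valid repair: the dualizability inequality you prove there is precisely $\varepsilon(u)\le\alpha(u)$, and since the Lipschitz injective norm $\varepsilon$ is a genuine norm, $\alpha(u)=0$ forces $\varepsilon(u)=0$ and hence $u=0$. So the proof is salvageable from within, but the nondegeneracy claim must be rerouted through $\varepsilon\le\alpha$ (or through \cite[Proposition 1.6]{ccjv}), and the dualizability step must therefore be established \emph{before} asserting that $\alpha$ is a norm.
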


\begin{proof}
Let $u=\sum_{i=1}^n \delta_{(x_i,y_i)}\boxtimes e_i\in X\boxtimes E$ and $f\in B(X,E^*)$. Note that $\langle i(u),f\rangle=u(f)$. Clearly, $i(u)$ is well defined on $B(X,E^*)$, it is linear and $||\langle i(u),f\rangle||\leq\Lip(f)\pi(u)\leq ||f||_B\pi(u)$ for all $f\in B(X,E^*)$. Then $i(u)$ is in $B(X,E^*)^*$ and 
$$
\left\|i(u)\right\|:=\sup\left\{\left|\left\langle i(u),f\right\rangle\right|\colon f\in B(X,E^*),\; \left\|f\right\|_B=1\right\}\leq\pi(u). 
$$
It is immediate that $i\colon X\boxtimes E\to B(X,E^*)^*$ is well defined and linear. Moreover, it is injective. Indeed, $i(u)=0$ means that $\langle i(u),f\rangle=0$ for all $f\in B(X,E^*)$. Since $B(X,E^*)$ contains the maps $g\cdot\phi$, it follows that $\langle u,g\cdot\phi\rangle=\langle i(u),g\cdot\phi\rangle=0$ for all $g\in X^\#$ and $\phi\in E^*$, and then $u=0$ by \cite[Proposition 1.6]{ccjv}.

Define the map $\alpha$ on $X\boxtimes E$ as in the statement. Notice that $\alpha(u)=||i(u)||$. Then $\alpha$ is a norm on $X\boxtimes E$ and so $i$ is a linear isometry from $X\boxtimes_\alpha E$ into $B(X,E^*)^*$. 
%Note that $\alpha$ is the dual norm of the norm $||\cdot||_B\right)$ of $B(X,E^*)$ induced on $i(X\boxtimes E)$

We claim that $\alpha$ is a Lipschitz cross-norm. Indeed, for any $\delta_{(x,y)}\boxtimes e\in X\boxtimes E$, we have 
$$
\alpha\left(\delta_{(x,y)}\boxtimes e\right)
=\left\|i\left(\delta_{(x,y)}\boxtimes e\right)\right\|
\leq \pi\left(\delta_{(x,y)}\boxtimes e\right)
=d(x,y)\left\|e\right\|.
$$
For the reverse, we may take $\phi\in S_{E^*}$ and $g\in S_{X^\#}$ such that $\langle\phi,e\rangle=||e||$ and $g(x)-g(y)=d(x,y)$. For example, $g(z)=d(z,y)-d(0,y)$ for all $z\in X$. Then $g\cdot\phi\in B(X,E^*)$ with $||g\cdot\phi||=1$, and we infer that   
$$
\alpha\left(\delta_{(x,y)}\boxtimes e\right)
\geq \left|\left\langle(g\cdot\phi)(x)-(g\cdot\phi)(y),e\right\rangle\right|
=\left|(g(x)-g(y))\left\langle \phi,e\right\rangle\right|
=d(x,y)\left\|e\right\|,
$$
and this proves our claim.

Finally, we prove that $\alpha$ is dualizable. Let $u=\sum_{i=1}^n \delta_{(x_i,y_i)}\boxtimes e_i\in X\boxtimes E$. For any $g\in S_{X^\#}$ and $\phi\in S_{E^*}$, we have
\begin{align*}
\left|\sum_{i=1}^n(g(x_i)-g(y_i))\left\langle\phi,e_i\right\rangle\right|
&=\left|\sum_{i=1}^n\left\langle(g\cdot\phi)(x_i)-(g\cdot\phi)(y_i),e_i\right\rangle\right|\\
&\leq\sup\left\{\left|\sum_{i=1}^n\left\langle f(x_i)-f(y_i),e_i\right\rangle\right|\colon f\in B(X,E^*),\; \left\|f\right\|_B=1\right\}
%&=\alpha(u),
\end{align*}
and therefore $\varepsilon(u)\leq\alpha(u)$. Then $\alpha$ is dualizable by \cite[Theorem 6.3 and Proposition 6.4]{ccjv}.
\end{proof}

We are ready to obtain the main result of this section. 

\begin{theorem}\label{teo-carac-dual-Lipschitz operator Banach space}
Let $B(X,E^*)$ be a Lipschitz operator Banach space. Then the following are equivalent:
\begin{enumerate}
	\item There exists a dualizable Lipschitz cross-norm $\alpha$ on $X\boxtimes E$ such that $B(X,E^*)=\Lipa(X,E^*)$ and $||f||_B=\Lipa(f)$ for every $f\in B(X,E^*)$.
	\item If $f$ is in $\Lipo(X,E^*)$ and $\{f_\gamma\}$ is a bounded net in $B(X,E^*)$ which converges to $f$ in the relative weak* Lipschitz operator topology of $\Lipo(X,E^*)$ on $B(X,E^*)$, then $f\in B(X,E^*)$ and $||f||_B\leq\sup\{||f_\gamma||_B\colon\gamma\in\Gamma\}$.
\end{enumerate} 
\end{theorem}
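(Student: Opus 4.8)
The plan is to prove the two implications separately, relying on the duality $\Lipa(X,E^*)\cong(X\widehat{\boxtimes}_\alpha E)^*$ of Theorem \ref{teo-dual} and on the cross-norm construction of Lemma \ref{lem-end}. I expect $(i)\Rightarrow(ii)$ to reduce to a short limiting estimate, while the substance of the theorem lies in $(ii)\Rightarrow(i)$, which I would handle by a bipolar argument in the dual pairing between $X\boxtimes E$ and $\Lipo(X,E^*)$.

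For $(i)\Rightarrow(ii)$, assume $B(X,E^*)=\Lipa(X,E^*)$ isometrically for some dualizable cross-norm $\alpha$. Let $\{f_\gamma\}$ be a net in $B(X,E^*)$ with $M:=\sup_\gamma\n{f_\gamma}_B<\infty$ converging to $f\in\Lipo(X,E^*)$ in the w*Lo topology. Since w*Lo convergence means $u(f)=\lim_\gamma u(f_\gamma)$ for every $u\in X\boxtimes E$, and $|u(f_\gamma)|\le\Lipa(f_\gamma)\alpha(u)\le M\alpha(u)$ by Definition \ref{def-cross-norm-Lipschitz operator}, passing to the limit gives $|u(f)|\le M\alpha(u)$ for all $u\in X\boxtimes E$. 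By Remark \ref{remark-def-cross-norm-Lipschitz operator} this says exactly that $f\in\Lipa(X,E^*)=B(X,E^*)$ with $\n{f}_B=\Lipa(f)\le M$. No compactness is needed here, although one could alternatively extract a w*-convergent subnet via Banach--Alaoglu and identify its limit using Corollary \ref{cor-theo-dual}.

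For $(ii)\Rightarrow(i)$, I would define $\alpha$ exactly as in Lemma \ref{lem-end}, which already guarantees that $\alpha$ is a dualizable Lipschitz cross-norm. The inclusion $B(X,E^*)\subseteq\Lipa(X,E^*)$ with $\Lipa(f)\le\n{f}_B$ is immediate from the defining formula $\alpha(u)=\sup\{|u(g)|:\n{g}_B=1\}$, which forces $|u(f)|\le\alpha(u)$ whenever $\n{f}_B=1$. The reverse inclusion is the crux. I would work in the dual pairing $\langle X\boxtimes E,\Lipo(X,E^*)\rangle$, which is separating on both sides (by \cite[Proposition 1.6]{ccjv} and the injectivity of $\widetilde{\Lambda}$ from the proof of Theorem \ref{teo-dual}), and whose associated weak topology on $\Lipo(X,E^*)$ is precisely w*Lo. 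Writing $B_B$ for the closed unit ball of $B(X,E^*)$, a direct polar computation gives $B_B^{\circ}=\{u:\alpha(u)\le1\}$ and then, via the formula in Remark \ref{remark-def-cross-norm-Lipschitz operator}, $B_B^{\circ\circ}=\{f\in\Lipo(X,E^*):\Lipa(f)\le1\}$, the closed unit ball of $\Lipa(X,E^*)$. As $B_B$ is absolutely convex, the bipolar theorem identifies this ball with the w*Lo-closure of $B_B$.

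The payoff and the main obstacle coincide in the final step: given $f$ with $\Lipa(f)\le1$, the bipolar identity produces a net in $B_B$ converging to $f$ in w*Lo, and this net is bounded by $1$ in $\n{\cdot}_B$. Hypothesis $(ii)$ then yields $f\in B(X,E^*)$ with $\n{f}_B\le1$, which gives $\Lipa(X,E^*)\subseteq B(X,E^*)$ and $\n{f}_B\le\Lipa(f)$; combined with the first inclusion this is the desired isometric identity. The delicate point is the bookkeeping of the bipolar argument: one must verify that the pairing is genuinely separating, that the polar of $B_B$ really is the $\alpha$-ball, and that the net extracted from the bipolar closure converges to an element $f$ that is a priori in $\Lipo(X,E^*)$ (so that $(ii)$ applies) while remaining norm-bounded. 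Once these identifications are secured, the rest is routine.
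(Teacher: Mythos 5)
Your proposal is correct, and while the easy direction $(i)\Rightarrow(ii)$ coincides with the paper's (the same limiting estimate $|u(f)|\le M\alpha(u)$, no compactness), your proof of $(ii)\Rightarrow(i)$ takes a genuinely different route at the crucial step. Both you and the paper construct $\alpha$ via Lemma \ref{lem-end} and obtain $B(X,E^*)\subseteq\Lipa(X,E^*)$ with $\Lipa(f)\le\|f\|_B$ straight from the definition of $\alpha$. But to produce, for a given $f\in\Lipa(X,E^*)$, a bounded net in $B(X,E^*)$ converging to $f$ in the w*Lo topology, the paper works in the bidual: it defines a functional $S(f)$ on $i(X\boxtimes E)\subseteq B(X,E^*)^*$ with $\|S(f)\|\le\Lipa(f)$, extends it by Hahn--Banach to $\widetilde{S}(f)\in B(X,E^*)^{**}$, and then applies Goldstine's theorem to obtain a net $\{f_\gamma\}$ in $B(X,E^*)$ with $\sup_\gamma\|f_\gamma\|_B\le\|\widetilde{S}(f)\|$ and $\kappa_B(f_\gamma)\to\widetilde{S}(f)$ weak*; evaluating against the functionals $i(u)$ yields $u(f_\gamma)\to u(f)$. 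You instead apply the bipolar theorem directly in the dual pair $\left(\Lipo(X,E^*),X\boxtimes E\right)$: the polar of $B_B$ is the $\alpha$-unit ball (by Lemma \ref{lem-end} and homogeneity), the bipolar is the $\Lipa$-unit ball (by Remark \ref{remark-def-cross-norm-Lipschitz operator}), and since $B_B$ is absolutely convex its bipolar is its w*Lo-closure. All the verifications you flag do check out: the pairing separates points of $\Lipo(X,E^*)$ by evaluating at $\delta_{(x,0)}\boxtimes e$, and separates points of $X\boxtimes E$ because its elements are by definition functionals on $\Lipo(X,E^*)$; the limit $f$ lies in $\Lipa(X,E^*)\subseteq\Lipo(X,E^*)$ by Lemma \ref{lem-alpha-Lipschitz}, so hypothesis $(ii)$ applies. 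The two arguments are morally equivalent---Goldstine's theorem is itself an instance of the bipolar theorem---but yours is shorter, avoids the detour through $B(X,E^*)^{**}$ and the Hahn--Banach extension, and makes transparent the link with the characterization via w*Lo-compactness of the unit ball in Corollary \ref{cor-carac-dual-lobs}; the paper's version, in exchange, stays entirely within elementary Banach-space duality rather than invoking the general theory of dual pairs.
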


\begin{proof}
Suppose that (i) holds. Let $f\in\Lipo(X,E^*)$ and let $\{f_\gamma\}$ be a bounded net in $B(X,E^*)$ converging to $f$ in the relative w*Lo topology of $\Lipo(X,E^*)$ on $B(X,E^*)$ . Denote $M=\sup\{||f_\gamma||_B\colon\gamma\in\Gamma\}$. If $\sum_{i=1}^n \delta_{(x_i,y_i)}\boxtimes e_i\in X\boxtimes E$ and $\varepsilon>0$, we have 
$$
\left|\sum_{i=1}^n\left\langle f(x_i)-f(y_i),e_i\right\rangle-\sum_{i=1}^n\left\langle f_{\gamma_0}(x_i)-f_{\gamma_0}(y_i),e_i\right\rangle\right|
=\left|\left(\sum_{i=1}^n \delta_{(x_i,y_i)}\boxtimes e_i\right)(f-f_{\gamma_0})\right|
<\varepsilon
$$
for some $\gamma_0\in\Gamma$, and therefore
\begin{align*}
\left|\sum_{i=1}^n\left\langle f(x_i)-f(y_i),e_i\right\rangle\right|
&<\left|\sum_{i=1}^n\left\langle f_{\gamma_0}(x_i)-f_{\gamma_0}(y_i),e_i\right\rangle\right|+\varepsilon\\
%&=\left|\left\langle \sum_{i=1}^n\delta_{(x_i,y_i)}\boxtimes e_i,S(f_{\gamma_0})\right\rangle\right|+\varepsilon\\
&\leq\Lipa(f_{\gamma_0})\alpha\left(\sum_{i=1}^n\delta_{(x_i,y_i)}\boxtimes e_i\right)+\varepsilon\\
&=\left\|f_{\gamma_0}\right\|_B\alpha\left(\sum_{i=1}^n\delta_{(x_i,y_i)}\boxtimes e_i\right)+\varepsilon\\
&\leq M\alpha\left(\sum_{i=1}^n\delta_{(x_i,y_i)}\boxtimes e_i\right)+\varepsilon.
\end{align*}
Since $\varepsilon$ was arbitrary, we deduce that $f\in\Lipa(X,E^*)$ and $\Lipa(f)\leq M$. Hence $f\in B(X,E^*)$ and $||f||_B\leq M$.

Conversely, assume that (ii) is true. Take the dualizable Lipschitz cross-norm $\alpha$ on $X\boxtimes E$ and the linear isometry $i$ from $X\boxtimes_\alpha E$ into $B(X,E^*)^*$ defined in Lemma \ref{lem-end}. Next we check that $B(X,E^*)=\Lipa(X,E^*)$ and $||f||_B=\Lipa(f)$ for all $f\in B(X,E^*)$. To this end, we first take a function $f$ in $B(X,E^*)$. The definition of $\alpha$ gives  
$$
\left|\sum_{i=1}^n\left\langle f(x_i)-f(y_i),e_i\right\rangle\right|
\leq\left\|f\right\|_B\alpha\left(\sum_{i=1}^n\delta_{(x_i,y_i)}\boxtimes e_i\right)
$$
for all $\sum_{i=1}^n\delta_{(x_i,y_i)}\boxtimes e_i\in X\boxtimes E$, and then $f\in\Lipa(X,E^*)$ with $\Lipa(f)\leq ||f||_B$. Conversely, pick a function $f$ in $\Lipa(X,E^*)$ and consider the functional $S(f)\colon i(X\boxtimes E)\to\K$ given by 
$$
\left\langle S(f),i(u)\right\rangle=\sum_{i=1}^n\left\langle f(x_i)-f(y_i),e_i\right\rangle
%\left\langle u,f\right\rangle
$$
for $u=\sum_{i=1}^n\delta_{(x_i,y_i)}\boxtimes e_i\in X\boxtimes E$. The fact that $i$ is injective guarantees that $S(f)$ is well defined. The linearity of $S(f)$ follows easily. Since $|\langle S(f),i(u)\rangle|
%=\left|\sum_{i=1}^n\left\langle f(x_i)-f(y_i),e_i\right\rangle\right|
=|u(f)|
\leq\Lipa(f)\alpha(u)
=\Lipa(f)||i(u)||$ for all $u\in X\boxtimes E$, it follows that $S(f)$ is continuous and $||S(f)||\leq\Lipa(f)$. Since $i(X\boxtimes E)$ is a linear subspace of $ B(X,E^*)^*$, the Hahn--Banach theorem provides a functional $\widetilde{S}(f)\in B(X,E^*)^{**}$ which extends to $S(f)$ and has the same norm.
%$\langle \widetilde{S}(f),i(u)\rangle=u(f)$ for all $u\in X\boxtimes E$, and $||\widetilde{S}(f)||=||S(f)||$. 
Let $\kappa_B$ be the canonical injection from $B(X,E^*)$ into $B(X,E^*)^{**}$. By Goldstein's theorem, there exists a net $\{f_\gamma\}$ in $B(X,E^*)$ for which $\sup\{||f_\gamma||_B\colon\gamma\in\Gamma\}\leq ||\widetilde{S}(f)||$ and $\{\kappa_B(f_\gamma)\}$ converges to $\widetilde{S}(f)$ in the weak* topology of $B(X,E^*)^{**}$. 
%This means that $\langle \kappa_{B^*}(\phi),\kappa_B(f_\gamma)\rangle=\langle \kappa_B(f_\gamma),\varphi\rangle$ converges to $\langle \kappa_{B^*}(\phi),\{widetilde{S}(f)\rangle=\langle\{widetilde{S}(f),\varphi\rangle$ for every $\varphi\in B^*=B(X,E^*)^*$. 
Since $i(X\boxtimes E)\subset B(X,E^*)^*$, it follows that for each $u\in X\boxtimes E$, the net $\{\langle \kappa_B(f_\gamma),i(u)\rangle\}$ converges to $\langle\widetilde{S}(f),i(u)\rangle$, that is, 
%Since $\left\langle \kappa_B(f_\gamma),i(u)\right\rangle=\left\langle i(u),f_\gamma\right\rangle=u(f_\gamma)$ and $\left\langle\widetilde{S}(f),i(u)\right\rangle=u(f)$ for every $u\in X\boxtimes E$. 
$\{u(f_\gamma)\}$ converges to $u(f)$. This means that $\{f_\gamma\}$ converges to $f$ in the relative weak* Lipschitz operator topology of $\Lipo(X,E^*)$ on $B(X,E^*)$ by Remark \ref{remark new}. Then, by hypothesis, $f\in B(X,E^*)$ and $||f||_B\leq\sup\{||f_\gamma||_B\colon\gamma\in\Gamma\}\leq\Lipa(f)$. This finishes the proof.
\end{proof}

Theorem \ref{teo-carac-dual-Lipschitz operator Banach space} can be reformulated as follows.

\begin{corollary}\label{cor-carac-dual-lobs}
Let $B(X,E^*)$ be a Lipschitz operator Banach space. The following are equivalent:
\begin{enumerate}
\item There exists a dualizable Lipschitz cross-norm $\alpha$ on $X\boxtimes E$ such that $B(X,E^*)=\Lipa(X,E^*)$ and $||f||_B=\Lipa(f)$ for all $f\in B(X,E^*)$.
\item There exists a dualizable Lipschitz cross-norm $\alpha$ on $X\boxtimes E$ such that $B(X,E^*)$ is isometrically isomorphic to $(X\widehat{\boxtimes}_\alpha E)^*$.
\item The closed unit ball of $B(X,E^*)$ is compact in the weak* Lipschitz operator topology of $\Lipo(X,E^*)$.
\end{enumerate}
\end{corollary}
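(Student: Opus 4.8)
The plan is to read this corollary as a repackaging of Theorem~\ref{teo-carac-dual-Lipschitz operator Banach space}, using Theorem~\ref{teo-dual} to translate between $\Lipa(X,E^*)$ and $(X\widehat{\boxtimes}_\alpha E)^*$ and Corollary~\ref{cor-theo-dual} to pass between the weak* and weak* Lipschitz operator topologies. First I would dispose of the equivalence (i)$\Leftrightarrow$(ii): by Theorem~\ref{teo-dual}, for any dualizable Lipschitz cross-norm $\alpha$ the space $\Lipa(X,E^*)$ is \emph{canonically} isometrically isomorphic to $(X\widehat{\boxtimes}_\alpha E)^*$, so asserting that $B(X,E^*)=\Lipa(X,E^*)$ with $\|\cdot\|_B=\Lipa$ is exactly the same as asserting that $B(X,E^*)$ is (canonically) isometrically isomorphic to $(X\widehat{\boxtimes}_\alpha E)^*$. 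This leaves the genuine content in the equivalence of (i) and (iii).

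For (i)$\Rightarrow$(iii), suppose $B(X,E^*)=\Lipa(X,E^*)$ isometrically. By Theorem~\ref{teo-dual} its closed unit ball coincides with the closed unit ball of $(X\widehat{\boxtimes}_\alpha E)^*$, which is weak* compact by the Banach--Alaoglu theorem. Being a bounded set, by Corollary~\ref{cor-theo-dual}(ii) the weak* and weak* Lipschitz operator topologies agree on it, so it is compact in the w*Lo topology; that this coincides with the relative w*Lo topology of $\Lipo(X,E^*)$ on $B(X,E^*)$ is exactly Remark~\ref{remark new}(iii). Hence (iii) holds.

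The heart of the matter is (iii)$\Rightarrow$(i), which I would prove by verifying condition (ii) of Theorem~\ref{teo-carac-dual-Lipschitz operator Banach space} and then invoking that theorem. So let $f\in\Lipo(X,E^*)$ and let $\{f_\gamma\}$ be a bounded net in $B(X,E^*)$ converging to $f$ in the relative w*Lo topology, and set $M=\sup\{\|f_\gamma\|_B\colon\gamma\in\Gamma\}$ (if $M=0$ then every $f_\gamma=0$, forcing $f=0\in B(X,E^*)$). Then $\{f_\gamma\}$ lies in $M\cdot B_{B(X,E^*)}$, which is w*Lo compact because scaling is a w*Lo homeomorphism of $\Lipo(X,E^*)$ and the unit ball is compact by hypothesis. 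By compactness some subnet converges in w*Lo to a limit $g\in M\cdot B_{B(X,E^*)}$, whence $g\in B(X,E^*)$ with $\|g\|_B\leq M$. Since the whole net already converges to $f$, that same subnet converges to $f$ as well; and because $X\boxtimes E$ separates the points of $\Lipo(X,E^*)$, so that w*Lo is Hausdorff (cf.\ Remark~\ref{remark new}(i)), uniqueness of limits gives $f=g$. Thus $f\in B(X,E^*)$ and $\|f\|_B\leq M$, which is precisely condition (ii) of Theorem~\ref{teo-carac-dual-Lipschitz operator Banach space}; that theorem then delivers (i).

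The main obstacle I anticipate is this last step: making the compactness hypothesis do its work requires care with the subnet/Hausdorff argument, since one must compactify \emph{the net itself}, via $M\cdot B_{B(X,E^*)}$, rather than the unit ball alone, and must cleanly justify that the limit produced by compactness coincides with the prescribed limit $f$ using separation of points. Everything else reduces to bookkeeping with the identifications of Theorems~\ref{teo-dual} and \ref{teo-carac-dual-Lipschitz operator Banach space}.
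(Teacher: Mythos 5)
Your proposal is correct and is essentially the paper's own proof: the paper runs the cycle (i)$\Rightarrow$(ii)$\Rightarrow$(iii)$\Rightarrow$(i), where (ii)$\Rightarrow$(iii) is exactly your Alaoglu/Corollary~\ref{cor-theo-dual}/Remark~\ref{remark new} argument and (iii)$\Rightarrow$(i) is your compactness argument (the paper takes a w*Lo cluster point of $\{f_\gamma/M\}$ in the unit ball rather than a subnet limit in $M\cdot B_{B(X,E^*)}$, then concludes via Hausdorffness and condition (ii) of Theorem~\ref{teo-carac-dual-Lipschitz operator Banach space}). The one point to note is that your dispatch of (ii)$\Rightarrow$(i) reads the isomorphism in (ii) as the canonical evaluation map, but the paper's (ii)$\Rightarrow$(iii) step rests on exactly the same implicit identification (it treats the unit ball of $B(X,E^*)$ as the unit ball of $\Lipa(X,E^*)$ inside $\Lipo(X,E^*)$), so this is not a gap relative to the paper's own argument.
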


\begin{proof}
(i) implies (ii) is deduced immediately taking into account Theorem \ref{teo-dual}. Assume now that (ii) holds. Then $B(X,E^*)$ is isometrically isomorphic to $\Lipa(X,E^*)$ by Theorem \ref{teo-dual}. Then the Alaoglu's theorem says us that the closed unit ball of $B(X,E^*)$ is compact in the w* topology of $\Lipa(X,E^*)$ and hence, by Corollary \ref{cor-theo-dual}, in the w*Lo topology of $\Lipa(X,E^*)$. Since $\Lipa(X,E^*)$ is a linear subspace of $\Lipo(X,E^*)$, this last topology agrees with the relative w*Lo topology of $\Lipo(X,E^*)$ on $\Lipa(X,E^*)$ by Remark \ref{remark new}. Then (iii) follows easily. 

Finally, suppose that (iii) is true. Let $f\in\Lipo(X,E^*)$ and let $\{f_\gamma\}$ be a bounded net in $B(X,E^*)$ which converges to $f$ in the relative w*Lo topology of $\Lipo(X,E^*)$ on $B(X,E^*)$. Let $M=\sup\{||f_\gamma||_B\colon\gamma\in\Gamma\}$. By (iii), the net $\{f_\gamma/M\}$ has a cluster point $g$ in the closed unit ball of $B(X,E^*)$ for the w*Lo topology of $\Lipo(X,E^*)$. We now apply Remark \ref{remark new} to obtain that $g$ is a cluster point of $\{f_\gamma/M\}$ for the relative w*Lo topology of $\Lipo(X,E^*)$ on $B(X,E^*)$, %apply parts (ii) and (iii) of the remark. 
but since $\{f_\gamma\}$ converges to $f$ in this same topology which is Hausdorff, we infer that $f/M=g$. 
%En efecto, por ser g un punto de acumulacion de la red, la red tiene una subred que converge a g pero por otra parte toda subred de una red convergente es convergente y converge al mismo limite, luego la subred tambien converge a f/M. Como la topologia es Hausdorff, toda red convergente tiene un unico limite y asi g=f/M. 
Hence $f\in B(X,E^*)$ and $||f||_B\leq M$. Then the assertion (ii) of Theorem \ref{teo-carac-dual-Lipschitz operator Banach space} is satisfied and we obtain (i).
\end{proof}

%-------------------------------------------------------------------------
%-------------------------------------------------------------------------
%-------------------------------------------------------------------------

\section{The representation theorem for maximal Lipschitz operator Banach ideals}\label{rtmlobi}

The aim of this section is to study the relationship between Lipschitz cross-norms and Lipschitz operator Banach ideals even in the case when the latter takes values in a Banach space which is not a dual space.
In order to do that, we prove a Lipschitz version of the representation theorem for maximal operator ideals \cite[17.5]{Defant-Floret} (originally due to H. P. Lotz \cite{Lotz}).
In a nutshell, that result says that under appropriate conditions on the operator ideal (or the tensor norm),
having a duality between an operator ideal and a tensor norm at the finite-dimensional level extends to a general duality. 
In order to even make sense of that in our Lipschitz context, we will need to extend the definitions of Lipschitz cross-norms and Lipschitz operator Banach ideals to cover all spaces at once and not just a fixed pair.

\subsection{Basic definitions and notations}

For a pointed metric space $X$ (a Banach space $E$), we denote by $\mfin(X)$ (respectively, $\fin(E)$) the set of all finite subsets of $X$ that contain the base point (respectively, the set of all finite-dimensional subspaces of $E$).
For a Banach space $E$, we denote by $\cofin(E)$ the set of all finite-codimensional subspaces of $E$.
Given $L \in \cofin(E)$, let $Q^E_L \colon E \to E/L$ be the canonical projection,
and given $Y \subset X$ let $I^X_Y \colon  Y \to X$ be the canonical injection. For a Banach space $E$, $\kappa_E \colon  E \to E^{**}$ denotes the canonical injection.

\begin{definition}\label{defn-generic-cross-norm}
By a generic Lipschitz cross-norm $\alpha$, we mean an assignment for each pointed metric space $X$ and each Banach space $E$ of a Lipschitz cross-norm $\alpha(\cdot; X, E)$ on the Lipschitz tensor product $X\boxtimes E$ 
(sometimes denoted simply by $\alpha$ if the spaces are clear from the context)
such that:
\begin{enumerate}%[(i)]
 \item $\alpha$ is dualizable, that is, $\varepsilon \le \alpha \le \pi$.
 \item $\alpha$ satisfies the metric mapping property:
if $h \in \Lipo(X_0,X_1)$ and $T \in \L(E_0,E_1)$, then
$$
\left\| h \boxtimes T \colon  X_0\boxtimes_\alpha E_0 \to X_1\boxtimes_\alpha E_1 \right\| \le \Lip(h)\left\|T\right\|.
$$
\end{enumerate}
If the assignment and the conditions above are given only for finite pointed metric spaces and finite-dimensional Banach spaces, we say that $\alpha$ is $\fin$-generic. 

A generic Lipschitz cross-norm $\alpha$ is said to be finitely generated if 
$$
\alpha(u ; X, E) = \inf\left\{\alpha(u; X_0,E_0) \colon X_0 \in \mfin(X), E_0 \in\fin(E), u \in X_0 \boxtimes E_0\right\}
$$
for every pointed metric space $X$, every Banach space $E$ and every $u \in X \boxtimes E$.
\end{definition}

Note that condition (ii) in Definition \ref{defn-generic-cross-norm} is a generalization of uniformity,
and that all the Lipschitz cross-norms we have defined until now,  namely $\varepsilon$, $\pi$, $d_p$ and $w_p$, are in fact finitely generated generic Lipschitz cross-norms.

Given a $\fin$-generic Lipschitz cross-norm $\alpha$, we can use the following procedure to extend it to a finitely generated generic Lipschitz cross-norm.

\begin{lemma}\label{lemma-finitely-generated-cross-norm}
Let $\alpha$ be a $\fin$-generic Lipschitz cross-norm.
For a pointed metric space $X$, a Banach space $E$ and $u \in X \boxtimes E$,
define
$$
\theta(u; X, E) = \inf\left\{\alpha(u ; X_0, E_0) \colon X_0 \in \mfin(X), E_0 \in \fin(E), u \in X_0 \boxtimes E_0\right\}.
$$
Then $\theta$ is a finitely generated generic Lipschitz cross-norm.
Moreover, $\theta$ and $\alpha$ coincide on $X \boxtimes E$ whenever $X$ is a finite pointed metric space and $E$ is a finite-dimensional Banach space.
\end{lemma}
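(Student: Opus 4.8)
The plan is to verify the defining properties of a finitely generated generic Lipschitz cross-norm one at a time, with everything resting on a single monotonicity principle that I establish first. Throughout I use the canonical identification of $X_0 \boxtimes E_0$ with a linear subspace of $X \boxtimes E$ for $X_0 \in \mfin(X)$ and $E_0 \in \fin(E)$ (so that $\alpha(u; X_0, E_0)$ is unambiguous for $u \in X_0 \boxtimes E_0$), which is guaranteed by the injectivity results of \cite{ccjv}. The key lemma is \emph{monotonicity}: if $X_0 \subseteq X_0'$ in $\mfin(X)$ and $E_0 \subseteq E_0'$ in $\fin(E)$ with $u \in X_0 \boxtimes E_0$, then $\alpha(u; X_0', E_0') \le \alpha(u; X_0, E_0)$. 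This follows by applying the $\fin$-metric mapping property of $\alpha$ to the canonical inclusions $j\colon X_0 \to X_0'$ and $\iota\colon E_0 \to E_0'$, which satisfy $\Lip(j) \le 1$ and $\n{\iota} \le 1$ and for which $(j \boxtimes \iota)(u) = u$. Consequently the infimum defining $\theta$ behaves like a directed limit: enlarging the finite data can only decrease the value of $\alpha$.

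Second, I would check that $\theta$ is a dualizable Lipschitz cross-norm on each $X \boxtimes E$. Finiteness and nonemptiness of the infimum are immediate from any representation of $u$, and absolute homogeneity is clear. For the triangle inequality, given $u, v$ and $\eta > 0$, I choose near-optimal finite data $(X_0, E_0)$ for $u$ and $(X_1, E_1)$ for $v$, pass to the common refinement $(X_0 \cup X_1, E_0 + E_1) \in \mfin(X) \times \fin(E)$ where monotonicity makes both $\alpha$-values drop, and invoke the triangle inequality of the honest norm $\alpha(\cdot; X_0 \cup X_1, E_0 + E_1)$. The bound $\theta \le \pi$ comes from estimating $\alpha(u; X_0, E_0) \le \pi(u; X_0, E_0) \le \sum_i d(x_i,y_i)\n{e_i}$ on the finite data supporting a near-optimal $\pi$-representation of $u$, then taking the infimum over representations. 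The bound $\varepsilon \le \theta$ comes from the observation that the Lipschitz injective norm does not increase under restriction of a test pair $(g,\phi) \in B_{X^\#} \times B_{E^*}$ to $(g|_{X_0}, \phi|_{E_0}) \in B_{X_0^\#} \times B_{E_0^*}$, so $\varepsilon(u; X, E) \le \varepsilon(u; X_0, E_0) \le \alpha(u; X_0, E_0)$ for every admissible pair. Together these give $\varepsilon \le \theta \le \pi$, which yields definiteness, dualizability, and, since $\varepsilon$ and $\pi$ agree on elementary tensors, the cross-norm identity $\theta(\delta_{(x,y)} \boxtimes e) = d(x,y)\n{e}$.

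Third, I would establish the metric mapping property for $\theta$. Given $h \in \Lipo(X, Y)$, $T \in \L(E, F)$ and $u \in X \boxtimes E$, I pick near-optimal finite data $(X_0, E_0)$ for $u$ and push it forward to $Y_0 = h(X_0) \in \mfin(Y)$ and $F_0 = T(E_0) \in \fin(F)$, so that $(h \boxtimes T)(u) \in Y_0 \boxtimes F_0$. The restrictions $h|_{X_0}\colon X_0 \to Y_0$ and $T|_{E_0}\colon E_0 \to F_0$ have Lipschitz and operator norms at most $\Lip(h)$ and $\n{T}$, so the $\fin$-metric mapping property of $\alpha$ on these finite pieces bounds $\alpha((h \boxtimes T)(u); Y_0, F_0)$ by $\Lip(h)\n{T}\,\alpha(u; X_0, E_0)$. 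Combining this with $\theta((h\boxtimes T)(u); Y, F) \le \alpha((h \boxtimes T)(u); Y_0, F_0)$ and letting the data for $u$ approach the infimum finishes this point.

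Finally, for the ``Moreover'' statement I would use monotonicity twice: when $X$ is finite and $E$ finite-dimensional, taking $(X, E)$ itself in the infimum gives $\theta(u; X, E) \le \alpha(u; X, E)$, while monotonicity applied to every admissible $(X_0, E_0) \subseteq (X, E)$ gives $\alpha(u; X, E) \le \alpha(u; X_0, E_0)$ and hence $\alpha(u; X, E) \le \theta(u; X, E)$. Finite generation is then automatic: substituting the identity $\theta(u; X_0, E_0) = \alpha(u; X_0, E_0)$ just proved into the infimum over $(X_0, E_0)$ reproduces exactly the definition of $\theta(u; X, E)$. The main obstacle throughout is the monotonicity lemma together with the underlying subspace identification $X_0 \boxtimes E_0 \hookrightarrow X \boxtimes E$; once these are in place, every other verification reduces to applying the already-granted properties of $\alpha$ at the finite level and passing to an infimum, the only delicate maneuver being the common-refinement trick for the triangle inequality.
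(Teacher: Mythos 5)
Your proof is correct and follows essentially the same route as the paper's: monotonicity of $\alpha$ under inclusions (i.e.\ the metric mapping property applied to canonical injections), the common-refinement trick for the triangle inequality, push-forward of finite data via $h(X_0)$ and $T(E_0)$ for the metric mapping property of $\theta$, and the coincidence $\theta=\alpha$ on finite data, which yields both the ``Moreover'' claim and finite generation. The only cosmetic difference is that you prove $\varepsilon \le \theta \le \pi$ directly (restricting test pairs $(g,\phi)$ for the lower bound, supporting near-optimal $\pi$-representations on finite data for the upper bound), whereas the paper simply invokes the fact that $\varepsilon$ and $\pi$ are themselves finitely generated; both arguments work, and yours is slightly more self-contained.
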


\begin{proof}
First, let us show that $\theta$ is a norm on $X \boxtimes E$.
It is clear that $\theta$ satisfies $\theta(\lambda u) = |\lambda| \theta(u)$, because so does $\alpha$.
Let $u_1, u_2 \in X \boxtimes E$.
Take $X_1, X_2 \in \mfin(X)$, $E_1, E_2 \in \fin(E)$ such that $u_j \in X_j \boxtimes E_j$, $j=1,2$.
Then, if $X_0 = X_1 \cup X_2$ and $E_0 = E_1 + E_2$, using the metric mapping property of $\alpha$ applied to the inclusions
$X_j \to X_0$ and $E_j \to E_0$, $j=1,2$,
we have
$$
\alpha(u_1+u_2 ; X_0, E_0) 
\le \alpha(u_1 ; X_0, E_0) + \alpha(u_2 ; X_0, E_0)
\le \alpha(u_1 ; X_1, E_1) + \alpha(u_2 ; X_2, E_2)
$$
and, by taking the infimum over all such $X_j$, $E_j$, we conclude that
$$
\theta(u_1+u_2 ; X, E) \le \theta(u_1 ; X, E) + \theta(u_2 ; X, E),
$$
giving the triangle inequality.
Since $\varepsilon$ and $\pi$ are finitely generated and $\varepsilon \le \alpha \le \pi$, it follows that $\varepsilon \le \theta \le \pi$
and thus $\theta$ is a dualizable Lipschitz cross-norm on $X \boxtimes E$.

Now let $X,Y$ be pointed metric spaces, $E,F$ be Banach spaces, $h \in \Lipo(X,Y)$ and $S \in \L(E,F)$.
Let $u \in X \boxtimes E$.
Given $X_0 \in \mfin(X)$ and $E_0\in \fin(E)$ such that $u \in X_0 \boxtimes E_0$,
note that $Y_0:= h(X_0) \in \mfin(Y)$, $F_0 := S(E_0) \in \fin(F)$ and $(h\boxtimes S)(u) \in Y_0 \boxtimes F_0$.
From the metric mapping property for $\alpha$, we infer that
$$
\theta\big( (h\boxtimes S)(u) ; Y,F \big)
\le \alpha\big( (h\boxtimes S)(u) ; Y_0,F_0 \big)
\le \Lip(h)\left\|S\right\|\alpha\big( u ; X_0,E_0 \big),
$$
and, by taking the infimum over all such $X_0, E_0$, we conclude that $\theta$ has the metric mapping property.

Now assume that $X$ is a finite pointed metric space, $E$ is a finite-dimensional Banach space and $u \in X \boxtimes E$.
By the definition of $\theta$, clearly $\theta(u ; X, E ) \le \alpha(u ; X , E)$.
Whenever  $X_0 \in \mfin(X)$, $E_0 \in \fin(E)$ are such that $u \in X_0 \boxtimes E_0$, the metric mapping property of $\alpha$ applied to the inclusion maps $X_0 \to X$ and $E_0 \to E$ shows that $\alpha(u ; X , E) \le \alpha(u ; X_0 , E_0)$, so we conclude that $\theta(u ; X, E ) = \alpha(u ; X , E)$.
\end{proof}

\begin{definition}\label{defn-generic-operator ideal}
By a generic Lipschitz operator Banach ideal $A$, we mean an assignment for each pointed metric space $X$ and each Banach space $E$ of a 
linear subspace $A(X, E)$ of $\Lipo(X, E)$ equipped with a norm $\|\cdot\|_A$ with the following properties:
\begin{enumerate}
 \item The Lipschitz rank-one operator $g \cdot e$ from $X$ to $E$ belongs to $A(X, E)$ for every $g \in X^\#$ and $e\in E$, and $\left\| g \cdot e\right\|_A \le \Lip(g) \left\|e\right\|$.
 \item $\Lip\le \left\|\cdot\right\|_A$.
 \item $(A(X, E), \left\|\cdot\right\|_A)$ is a Banach space.
 \item The (strengthened) ideal property: If $f \in A(X, E)$, $h \in \Lipo(Z, X)$ and $S \in \L(E, F)$, then the composition $S f h$ belongs
to $A(Z, F)$ and $\left\|Sfh\right\|_A \le \left\|S\right\|\left\|f\right\|_A\Lip(h)$.
\end{enumerate}
If the assignment and the conditions above are given only for finite pointed metric spaces and finite-dimensional Banach spaces, we say that $A$ is $\fin$-generic.
\end{definition}

Note that Definition \ref{defn-generic-operator ideal} is a combination of the definitions of Lipschitz operator Banach ideal (Definition \ref{def-ideal}) and Lipschitz operator Banach space (Definition \ref{def-Lipschitz operator Banach space}), but with a stronger ideal property and for a general Banach space instead of a dual one.
Note also that $\Lipo$, $\Pi_p^L$ and $\Gamma_p^{\Lip}$ are examples of generic Lipschitz operator Banach ideals.

Given a $\fin$-generic Lipschitz operator Banach ideal $A$, there are several different ways of extending it to a generic Lipschitz operator Banach ideal. Here we consider the ``largest'' such extension.

\begin{lemma}\label{lemma-maximal-ideal}
Let $A$ be a $\fin$-generic Lipschitz operator Banach ideal. For a pointed metric space $X$, a Banach space $E$ and $f\in\Lipo(X,E)$, define 
$$
\left\|f\right\|_{A^{\max}}= \sup\left\{\left\| Q^E_L \circ f \circ I^X_Y \right\|_A \colon Y \in \mfin(X), L \in \cofin(E)\right\}
$$
and
$$
A^{\max}(X,E)= \left\{f \in \Lipo(X,E) \colon \left\|f\right\|_{A^{\max}} < \infty\right\}.
$$
Then $(A^{\max}, \left\|\cdot\right\|_{A^{\max}})$ is a generic Lipschitz operator Banach ideal.
Moreover, $A^{\max}(X,E) = A(X,E)$ holds isometrically whenever $X$ is a finite pointed metric space and $E$ is a finite-dimensional Banach space.
\end{lemma}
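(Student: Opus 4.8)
The plan is to verify, for $(A^{\max},\n{\cdot}_{A^{\max}})$, the four defining properties of a generic Lipschitz operator Banach ideal (Definition \ref{defn-generic-operator ideal}) together with the final isometric identification; the only tool available throughout is that $A$ satisfies those same properties at the finite level. Before treating the individual axioms I would record that $\n{\cdot}_{A^{\max}}$ is absolutely homogeneous and subadditive: both follow by applying the corresponding property of $\n{\cdot}_A$ to each truncation $Q^E_L \circ f \circ I^X_Y$ (noting $Q^E_L \circ (\lambda f + g) \circ I^X_Y = \lambda\, Q^E_L f I^X_Y + Q^E_L g I^X_Y$, since $Q^E_L$ is linear) and then taking the supremum over $Y \in \mfin(X)$ and $L \in \cofin(E)$. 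This makes $A^{\max}(X,E)$ a linear subspace of $\Lipo(X,E)$ on which $\n{\cdot}_{A^{\max}}$ is a seminorm.

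Conditions (i) and (ii) I expect to be short. For (i), given $g\in X^\#$ and $e\in E$, each truncation $Q^E_L\circ(g\cdot e)\circ I^X_Y$ equals the rank-one operator $(g|_Y)\cdot Q^E_L(e)$, so the finite-level property (i) gives $\n{(g|_Y)\cdot Q^E_L(e)}_A \le \Lip(g|_Y)\,\n{Q^E_L(e)} \le \Lip(g)\n{e}$; taking the supremum yields $\n{g\cdot e}_{A^{\max}}\le\Lip(g)\n{e}$. For (ii), the finite-level bound $\Lip\le\n{\cdot}_A$ gives, for all $x_1,x_2\in X$ and every $L$,
$$
\frac{\n{Q^E_L(f(x_1)-f(x_2))}}{d(x_1,x_2)} \le \Lip\bigl(Q^E_L f I^X_{\{0,x_1,x_2\}}\bigr) \le \n{f}_{A^{\max}};
$$
choosing $L=\ker\phi$ for a norming functional $\phi$ of $f(x_1)-f(x_2)$ makes the numerator equal to $\n{f(x_1)-f(x_2)}$, whence $\Lip(f)\le\n{f}_{A^{\max}}$. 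In particular $\n{\cdot}_{A^{\max}}$ is a genuine norm, since if it vanishes then $f$ is constant, hence $0$ as it is base-point preserving.

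The ideal property (iv) is the heart of the argument and the step I expect to be the main obstacle. Fix $f\in A^{\max}(X,E)$, $h\in\Lipo(Z,X)$, $S\in\L(E,F)$, and an arbitrary pair $W\in\mfin(Z)$, $M\in\cofin(F)$; I must bound $\n{Q^F_M (Sfh) I^Z_W}_A$ by reducing the whole composition to the finite setting where $A$'s ideal property applies. Set $Y:=h(W)\in\mfin(X)$ and factor $h\, I^Z_W = I^X_Y\circ h_0$, where $h_0\colon W\to Y$ is the corestriction of $h$, so $\Lip(h_0)\le\Lip(h)$. The essential trick is to pull the finite-codimensional subspace $M$ back through $S$: put $N:=S^{-1}(M)=\ker(Q^F_M S)$, which lies in $\cofin(E)$ because $Q^F_M S$ has finite-dimensional range, and factor $Q^F_M S = \widetilde S\circ Q^E_N$ through the induced injection $\widetilde S\colon E/N\to F/M$ with $\n{\widetilde S}=\n{Q^F_M S}\le\n{S}$. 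Then
$$
Q^F_M (Sfh)\, I^Z_W = \widetilde S\bigl(Q^E_N f I^X_Y\bigr) h_0,
$$
a composition in which $W$ and the intermediate space $Y$ are finite, $E/N$ and $F/M$ are finite-dimensional, and $\n{Q^E_N f I^X_Y}_A \le \n{f}_{A^{\max}}$ by definition. The finite-level ideal property now gives $\n{Q^F_M (Sfh) I^Z_W}_A \le \n{\widetilde S}\,\n{Q^E_N f I^X_Y}_A\,\Lip(h_0) \le \n{S}\,\n{f}_{A^{\max}}\,\Lip(h)$, and taking the supremum over $W,M$ yields the claim.

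It remains to prove completeness (iii) and the finite identification. For (iii) I would take a $\n{\cdot}_{A^{\max}}$-Cauchy sequence $\{f_n\}$; since $\Lip\le\n{\cdot}_{A^{\max}}$ it is Lipschitz-Cauchy and converges to some $f\in\Lipo(X,E)$. For each fixed $Y,L$ the sequence $Q^E_L f_n I^X_Y$ is $\n{\cdot}_A$-Cauchy in the Banach space $A(Y,E/L)$, and its $A$-limit coincides with its Lipschitz limit $Q^E_L f I^X_Y$ (because $\Lip\le\n{\cdot}_A$ at the finite level); letting $m\to\infty$ in $\n{Q^E_L(f_n-f_m)I^X_Y}_A\le\n{f_n-f_m}_{A^{\max}}$ and taking the supremum over $Y,L$ shows $\n{f_n-f}_{A^{\max}}\to 0$ and $f\in A^{\max}(X,E)$. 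Finally, when $X$ is finite and $E$ finite-dimensional, the inequality $\n{f}_{A^{\max}}\le\n{f}_A$ follows from the finite-level ideal property applied to $Q^E_L f I^X_Y$ (using $\n{Q^E_L}\le 1$ and $\Lip(I^X_Y)\le 1$), while the reverse inequality comes from the admissible choice $Y=X$, $L=\{0\}$, for which $Q^E_L f I^X_Y=f$; hence $A^{\max}(X,E)=A(X,E)$ holds isometrically.
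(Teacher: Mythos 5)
Your proof is correct, and for conditions (i), (ii), (iv) and the final isometric identification it follows essentially the same route as the paper: the same identity $Q^E_L \circ (g\cdot e)\circ I^X_Y = (g|_Y)\cdot(Q^E_L e)$ for rank-one maps; the same norming-functional trick for $\Lip\le\|\cdot\|_{A^{\max}}$ (the paper merely states the identity $\Lip(f)=\sup_{Y,L}\Lip(Q^E_L\circ f\circ I^X_Y)$, and uses exactly your kernel-of-a-norming-functional argument a few lines after the lemma to show that $\Lipo$ is maximal, so you have supplied the detail it omits); the same factorization of $Q^F_M\circ S$ through the quotient by $N=\ker(Q^F_M\circ S)\in\cofin(E)$ for the ideal property; and the same choice $Y=X$, $L=\{0\}$ for the reverse inequality in the finite-dimensional case. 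The genuine divergence is in the completeness proof (iii). The paper verifies that absolutely convergent series converge and, to bound $\left\|Q^E_L\circ f\circ I^X_Y\right\|_A$ for the limit $f$, it invokes Corollary \ref{cor-carac-dual-lobs} to represent the finite-level space $A(Y,E/L)$ as a dual space $\big(Y\boxtimes_\alpha (E/L)^*\big)^*$, and then tests the pointwise-convergent series against elements $u\in Y\boxtimes (E/L)^*$. You instead work with Cauchy sequences: each truncation $Q^E_L\circ f_n\circ I^X_Y$ is Cauchy in the Banach space $A(Y,E/L)$ (finite-level axiom (iii)), its $A$-limit is identified with the Lipschitz limit $Q^E_L\circ f\circ I^X_Y$ via $\Lip\le\|\cdot\|_A$, and a standard $\varepsilon$-argument transfers the bound to the supremum over $Y$ and $L$. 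Your route is more elementary and self-contained: it uses only the axioms of a $\fin$-generic ideal and does not depend on the dual-representation machinery of Section \ref{lobs} (which the paper can legitimately apply because, for $Y$ finite and $E/L$ finite-dimensional, $A(Y,E/L)$ is finite-dimensional and its unit ball is automatically compact in the relevant topology); the paper's series formulation yields the quantitative bound $\|f\|_{A^{\max}}\le\sum_n\|f_n\|_{A^{\max}}$ directly, but at the cost of invoking that earlier duality theory. Both arguments are valid.
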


\begin{proof}
Clearly, $A^{\max}(X,E)$ is a nonempty subset of $\Lipo(X,E)$. Since $\left\|\cdot\right\|_A$ is a norm, it is immediate that $A^{\max}(X,E)$ is a linear subspace of $\Lipo(X,E)$ and that $ \left\|\cdot\right\|_{A^{\max}}$ is a norm. We now verify the conditions in the definition of a generic Lipschitz operator Banach ideal:
\begin{enumerate}
 \item Let $g \in X^\#$ and $e \in E$. For every $Y \in \mfin(X)$ and $L \in \cofin(E)$,
 we have
\begin{align*}
\left\|Q^E_L \circ (g \cdot e) \circ I^X_Y \right\|_{A} &= \left\|(g|_Y) \cdot (Q^E_Le) \right\|_A \\
&\le \Lip(g|_Y) \left\| Q^E_Le \right\| \\
&\le \Lip(g)\left\| e \right\|,
\end{align*}
so $g\cdot e$ belongs to $A^{\max}(X,E)$ and $\left\|g\cdot e\right\|_{A^{\max}} \le \Lip(g)\left\|  e \right\|$.
\item Note that, for any $f \in \Lipo(X, E)$,
$$
\Lip(f) = \sup\big\{ \Lip( Q^E_L \circ f \circ I^X_Y ) \colon Y \in \mfin(X), L \in \cofin(E) \big\},
$$
from where it follows that $\Lip\le \left\|\cdot\right\|_{A^{\max}}$.
 \item Since we already know that $A^{\max}(X,E)$ is a normed space, it suffices to show that every absolutely convergent series $\sum f_n $ in $A^{\max}(X,E)$ is convergent.
Since $\Lip\le \left\|\cdot\right\|_{A^{\max}}$, the series $\sum_n f_n$ converges in $\Lipo(X,E)$ to a limit $f \in \Lipo(X,E)$.
Fix $Y \in\fin(X)$ and $L\in\cofin(E)$.
Since $Y$ is finite and $E/L$ is finite-dimensional, by Corollary \ref{cor-carac-dual-lobs} there exists a dualizable Lipschitz cross-norm $\alpha$ on $Y \boxtimes (E/L)^*$ such that $A(Y, E/L) = \big( Y \boxtimes_\alpha (E/L)^* \big)^*$.
Note that $\sum_n Q^E_L \circ f_n \circ I^X_Y$ converges pointwise to $Q^E_L \circ f \circ I^X_Y$, so
for each $u \in Y \boxtimes (E/L)^*$, we have
\begin{align*}
\big|(Q^E_L \circ f \circ I^X_Y)(u)\big|
&= \Big| \sum_{n=1}^{+\infty} (Q^E_L \circ f_n \circ I^X_Y)(u) \Big| \\
&\le  \sum_{n=1}^{+\infty} \big|(Q^E_L \circ f_n \circ I^X_Y)(u) \big| \\
&\le \alpha(u) \sum_{n=1}^{+\infty} \left\|Q^E_L \circ f_n \circ I^X_Y \right\|_A,
\end{align*}
so it follows that $\left\| Q^E_L \circ f \circ I^X_Y \right\|_A \le \sum_{n=1}^{+\infty} \left\|f_n\right\|_{A^{\max}}$ and thus
$f \in A^{\max}(X,E)$ and $\left\|f\right\|_{A^{\max}} \le \sum_{n=1}^{+\infty} \left\|f_n\right\|_{A^{\max}}$.
Now, applying the same argument to $f - \sum_{n=1}^{N} f_n$ yields 
$$
\left\| f - \sum_{n=1}^{N} f_n \right\|_{A^{\max}}
=\left\|\sum_{n=N+1}^{+\infty} f_n\right\|_{A^{\max}}
\le \sum_{n=N+1}^{+\infty} \left\|f_n\right\|_{A^{\max}},
$$
so the the series $\sum_n f_n$ converges to $f$ in $A^{\max}(X,E)$.
 \item Let $f \in A^{\max}(X, E)$, $h \in \Lipo(Z, X)$ and $S \in \L(E, F)$.
 Fix $Y \in \mfin(Z)$ and $L \in \cofin(F)$. Let $K \subset E$ be the kernel of the map $Q^F_L\circ S$, and notice that $K \in \cofin(E)$ and that, by the universal property of quotients, there is a linear map $\tilde{S} \colon  E/K \to F/L$ with $\left\|\tilde{S}\right\|\le\left\| S\right\|$ such that $ \tilde{S}Q^E_K = Q^F_LS$. Thus, noticing that $h(Y) \in \fin(X)$ and using the ideal property of $A$,
 \begin{align*}
\left\|Q^F_L \circ Sfh \circ I^Z_Y\right\|_A 
&= \left\|\tilde{S}Q^E_K \circ f \circ I^X_{h(Y)} \circ hI^Z_Y\right\|_A \\
&\le \left\|\tilde{S}\right\| \left\| Q^E_K \circ f \circ I^X_{h(Y)} \right\|_A \Lip(hI^Z_Y)\\
&\le \left\|S\right\| \left\|f \right\|_{A^{\max}} \Lip(h).
\end{align*}
\end{enumerate}
Now let $X$ be a finite pointed metric space, $E$ be a finite-dimensional Banach space and $f \in \Lipo(X,E)$.
From the definition of $A^{\max}$ and the ideal property for $A$, it is clear that $\left\|f\right\|_{A^{\max}} \le \left\|f\right\|_A$. But taking $Y=X$ and $L=\{0\}$ in the definition of $\left\|f\right\|_{A^{\max}}$ shows that
$$
\left\|f\right\|_{A^{\max}}
\ge\left\| Q^E_{\{0\}} \circ f \circ I^X_X \right\|_A
=\left\|Q^E_{\{0\}} \circ f \right\|_A
=\left\|f\right\|_A,
$$
where the last equality follows from the fact that $Q^E_{\{0\}}$ is a bijective isometry.
\end{proof}

We call $(A^{\max}, \left\|\cdot\right\|_{A^{\max}})$ the \emph{maximal hull of $A$}, and we say that a generic Lipschitz operator Banach ideal $A$ is \emph{maximal} if $(A, \left\|\cdot\right\|_A) = (A^{\max}, \left\|\cdot\right\|_{A^{\max}})$.
Note that $A^{\max}$ is always a maximal generic Lipschitz operator Banach ideal.
A first example of a maximal generic Lipschitz operator Banach ideal is given by the ideal $\Lipo$ of Lipschitz operators.
Suppose that $f \in \Lipo^{\max}(X,E)$ with norm at most $C$,
and Let $x,y$ be distinct points in $X$. Note that
$\n{Q^E_L \circ f (x) - Q^E_L \circ f(y)} \le C d(x,y)$ for every $L \in\codim(E)$.
By taking $L$ to be the kernel of a norm one functional in $E^*$ which norms $f(x)-f(y)\in E$, we conclude that $\n{f(x) - f(y)} \le C d(x,y)$ and thus $f$ is Lipschitz with norm at most $C$ as required. 
A similar but slightly more involved argument shows that $\Pi_p^L$ and $\Gamma_p^{\Lip}$ are also maximal generic Lipschitz operator Banach ideals, based on the fact that given finitely many vectors in $E$ one can find $L \in \cofin(E)$ such that the quotient $E \to E/L$ preserves the norms of those vectors.
There are also generic Lipschitz operator Banach ideals that are not maximal, for example the Lipschitz compact operators from \cite{jsv} and the Lipschitz $p$-nuclear operators from \cite{cz} (the reader is referred to those papers for the definitions).
Any Lipschitz operator belongs to the maximal hull of the ideal of Lipschitz compact operators, since every Lipschitz operator with finite domain is Lipschitz compact, but it is easy to find Lipschitz operators which are not Lipschitz compact and thus the ideal of Lipschitz compact operators is not maximal.
Similarly, using \cite[Thm. 2.1]{cz}, the existence of a linear operator from a separable Banach space to a dual Banach space which is $p$-integral but not $p$-nuclear shows that the ideal of Lipschitz $p$-nuclear operators is not maximal.

%-------------------------------------------------------------------------
%-------------------------------------------------------------------------
%-------------------------------------------------------------------------
\subsection{The association between finitely generated generic Lipschitz cross-norms and maximal generic Lipschitz operator Banach ideals}

The main idea we will exploit is that to every finitely generated generic Lipschitz cross-norm one can canonically associate a maximal generic Lipschitz operator Banach ideal, and vice versa.

We say that a $\fin$-generic Lipschitz cross-norm $\alpha$ and a $\fin$-generic Lipschitz operator Banach ideal $A$ are \emph{associated}, and we write $A \sim \alpha$, if for every finite pointed metric space $X$ and every finite-dimensional Banach space $E$, the relation $A(X,E^*) = (X \boxtimes_\alpha E)^*$, or equivalently $A(X,E^*)= \Lipa(X,E^*)$, holds isometrically.

The key will be the following generalization of Theorem \ref{teo-caracter-ideal},
whose heart is the fact that the metric mapping property of $\alpha$ and the (strengthened) ideal property of $\Lipa$ are equivalent as long as we restrict ourselves to finite metric spaces and finite-dimensional Banach spaces.

\begin{proposition}\label{prop-metric-mapping-property-equivalent-to-ideal-property}
Suppose that for every finite metric space $X$ and every finite-dimensional Banach space $E$,
$\alpha$ is a norm on $X \boxtimes E$ and
$A(X,E)$ is a linear subspace of $\Lipo(X,E)$ equipped with a norm $\left\| \cdot \right\|_A$
so that
$
A(X,E) = (X \boxtimes_\alpha E^*)^*
$
holds isometrically.
Then $\alpha$ is a $\fin$-generic Lipschitz cross-norm if and only if
$A$ is a $\fin$-generic Lipschitz operator Banach ideal.
\end{proposition}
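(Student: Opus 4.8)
The plan is to prove both implications by transporting, across the isometric duality $A(X,E)=(X\boxtimes_\alpha E^*)^*$, the (strengthened) ideal property of $A$ and the metric mapping property of $\alpha$ into one another; this is the finite-dimensional refinement of Theorem \ref{teo-caracter-ideal}. Throughout one works only with finite $X$ and finite-dimensional $E$, so every Lipschitz tensor product equals its completion, every Banach space is reflexive, and the canonical embedding of $X\boxtimes_\alpha E$ into its bidual is an isometry; I also freely use the identification $E^{**}=E$. In the forward direction $\alpha$ is a cross-norm, so Theorem \ref{teo-dual} applied with $E^*$ as coefficient space rewrites the hypothesis as $A(X,E)=\Lipa(X,(E^*)^*)=\Lipa(X,E)$ isometrically; thus $f\in A(X,E)$ means precisely that $f$ obeys the defining $\alpha$-Lipschitz inequality on $X\boxtimes E^*$ with constant $\|f\|_A$.

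For the forward implication I would first dispose of the three routine axioms of Definition \ref{defn-generic-operator ideal}: the rank-one estimate is Theorem \ref{teo-finite-range} (over $E^*$, so $g\cdot e$ with $e\in(E^*)^*=E$ has $\alpha$-norm $\Lip(g)\|e\|$), the bound $\Lip\le\|\cdot\|_A$ is Lemma \ref{lem-alpha-Lipschitz}, and completeness holds because $A(X,E)$ is a dual space. The substance is the ideal property. Given $f\in A(X,E)$, $h\in\Lipo(Z,X)$, $S\in\L(E,F)$ and $\sum_i\delta_{(z_i,w_i)}\boxtimes\psi_i\in Z\boxtimes F^*$, I compute
$$
\sum_i\langle Sfh(z_i)-Sfh(w_i),\psi_i\rangle=\sum_i\langle f(h(z_i))-f(h(w_i)),S^*\psi_i\rangle,
$$
with $S^*\colon F^*\to E^*$. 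The $\alpha$-Lipschitz inequality for $f$ bounds the right-hand side by $\|f\|_A\,\alpha\big((h\boxtimes S^*)(\sum_i\delta_{(z_i,w_i)}\boxtimes\psi_i)\big)$, and the metric mapping property of $\alpha$ bounds this in turn by $\Lip(h)\|S\|\,\|f\|_A\,\alpha(\sum_i\delta_{(z_i,w_i)}\boxtimes\psi_i)$. As this holds for every element of $Z\boxtimes F^*$, I conclude $Sfh\in\Lipa(Z,(F^*)^*)=A(Z,F)$ with $\|Sfh\|_A\le\|S\|\|f\|_A\Lip(h)$.

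For the converse I run the same computation in reverse. Since $A$ is an operator Banach ideal, $A(X,E)$ is a Lipschitz operator Banach space of maps into $(E^*)^*$: conditions (i)--(iii) of Definition \ref{def-Lipschitz operator Banach space} follow from the ideal axioms together with $\Lip(g\cdot e)=\Lip(g)\|e\|$, which forces $\|g\cdot e\|_A=\Lip(g)\|e\|$. The cross-norm built from this Banach space in Lemma \ref{lem-end} is, by the bidual isometry and the hypothesis, exactly $\alpha(u)=\sup\{|u(f)|\colon f\in A(X,E),\ \|f\|_A\le1\}$, so Lemma \ref{lem-end} delivers $\varepsilon\le\alpha\le\pi$; in particular $\alpha$ is a dualizable Lipschitz cross-norm. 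For the metric mapping property, given $h\in\Lipo(X_0,X_1)$, $T\in\L(E_0,E_1)$ and $u\in X_0\boxtimes E_0$, I pair $(h\boxtimes T)(u)$ with a norming $f\in A(X_1,E_1^*)=(X_1\boxtimes_\alpha E_1)^*$ via the identity $((h\boxtimes T)(u))(f)=u(T^*fh)$, where $T^*\colon E_1^*\to E_0^*$; the ideal property gives $\|T^*fh\|_A\le\|T\|\Lip(h)\|f\|_A$, and pairing against $u$ and taking the supremum over $f$ yields $\alpha((h\boxtimes T)(u))\le\Lip(h)\|T\|\,\alpha(u)$.

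The hard part will be bookkeeping rather than any deep difficulty: one must keep straight the levels $E$, $E^*$ and the identification $E^{**}=E$, observe that left-composition by an operator on the ideal side becomes its adjoint acting on the tensor factor on the cross-norm side (and conversely), and verify at each step that the duality $A(\cdot,\cdot^*)=(\cdot\boxtimes_\alpha\cdot)^*$ and the isometric bidual embedding are invoked over the correct pair of finite spaces. One must also be careful to apply Lemma \ref{lem-end} to $A(X,E)$ viewed as a space of maps into the dual space $(E^*)^*$, so that the resulting cross-norm lives on $X\boxtimes E^*$ and may legitimately be identified with $\alpha$.
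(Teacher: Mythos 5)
Your proposal is correct and takes essentially the same route as the paper's proof: both directions transport the axioms across the canonical duality, with the rank-one and $\Lip\le\|\cdot\|_A$ axioms corresponding to $\varepsilon\le\alpha\le\pi$, and the (strengthened) ideal property corresponding to the metric mapping property via exactly the adjoint-plus-norming-functional computation that the paper cites as a ``small modification'' of Theorem \ref{teo-caracter-ideal}. Your detour through Lemma \ref{lem-end} and the bidual isometry is just a packaged form of the paper's direct verification that $\varepsilon\le\alpha\le\pi$, so the two arguments coincide in substance.
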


\begin{proof}
Suppose that  $A$ is a $\fin$-generic Lipschitz operator Banach ideal.
Let $X$ be a finite pointed metric space and $E$ be a finite-dimensional Banach space.
By hypothesis, $\alpha$ is already a norm on $X \boxtimes E$.
The condition $\Lip\le \left\|\cdot\right\|_A$ implies that $\alpha \le \pi$ on $X \boxtimes E$,
whereas the fact that, for every $g \in X^\#$ and $e\in E$, we have that $\left\| g \cdot e \right\|_A \le \Lip(g) \left\|e\right\|$ implies that $\varepsilon \le \alpha$ on $X \boxtimes E$.
Thus, $\alpha$ is a dualizable Lipschitz cross-norm.
A small modification of the arguments in the proof of Theorem \ref{teo-caracter-ideal}.(i) shows that $\alpha$ has the metric mapping property.

Now suppose that $\alpha$ is a $\fin$-generic Lipschitz cross-norm.
By hypothesis, $\left\|\cdot \right\|_A$ is a complete norm on $A(X, E)$.
Reversing the arguments above, the condition $\alpha \le \pi$ implies that $\Lip\le \left\|\cdot\right\|_A$ on $A(X, E)$,
whereas the condition $\varepsilon \le \alpha$ implies that, for every
$g \in X^\#$ and $e \in E$, we have that $\left\| g \cdot e \right\|_A \le \Lip(g) \left\|e\right\|$.
Finally, a small modification of the argument in the proof of Theorem \ref{teo-caracter-ideal}.(i) shows that $A$ has the (strengthened) ideal property.
\end{proof}

More generally, we have the following two lemmas that give constructions allowing us to go back and forth between generic Lipschitz cross-norms and generic Lipschitz operator Banach ideals.

\begin{lemma}\label{lemma-from-cross-norm-to-ideal}
Let $\alpha$ be a
$\fin$-generic Lipschitz cross-norm. For a pointed metric space $X$ and a Banach space $E$,
given $f \in \Lipo(X,E)$
define
$$
\left\|f\right\|_A = \sup\left\{ \Lipa( Q^E_L \circ f \circ I^X_Y )\colon Y \in \mfin(X), L \in \cofin(E) \right\}
$$
and 
$$
A(X,E) =\left\{ f \in \Lipo(X,E) \colon \left\|f\right\|_{A} < \infty \right\}.
$$
Then $A$ is a maximal generic Lipschitz operator Banach ideal associated to $\alpha$.
\end{lemma}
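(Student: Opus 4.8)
The plan is to exhibit $A$ as the maximal hull of a suitable $\fin$-generic Lipschitz operator Banach ideal manufactured from $\alpha$, so that essentially all the work has already been done in Proposition \ref{prop-metric-mapping-property-equivalent-to-ideal-property} and Lemma \ref{lemma-maximal-ideal}. First I would define, for a finite pointed metric space $X$ and a finite-dimensional Banach space $E$, a space $A_0(X,E)$ by identifying $E$ with its bidual $(E^*)^*$ via the isometric isomorphism $\kappa_E$ (legitimate since finite-dimensional spaces are reflexive) and setting $A_0(X,E)=\Lipa(X,(E^*)^*)$, the norm $\Lipa$ being built from the cross-norm $\alpha(\cdot;X,E^*)$ on $X\boxtimes E^*$. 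By Lemma \ref{lem-alpha-Lipschitz} and $\kappa_E$ these are genuine Lipschitz maps into $E$, so $A_0(X,E)$ is a linear subspace of $\Lipo(X,E)$; and Theorem \ref{teo-dual}, applied with $E^*$ in the role of the second variable, gives $A_0(X,E)=(X\boxtimes_\alpha E^*)^*$ isometrically (the completion being harmless, as $X\boxtimes E^*$ is already finite-dimensional). This is exactly the identification required by Proposition \ref{prop-metric-mapping-property-equivalent-to-ideal-property}, so since $\alpha$ is a $\fin$-generic Lipschitz cross-norm, that proposition yields that $A_0$ is a $\fin$-generic Lipschitz operator Banach ideal.

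Next I would observe that the ideal $A$ of the statement is precisely the maximal hull $A_0^{\max}$. Indeed, for $Y\in\mfin(X)$ and $L\in\cofin(E)$ the map $Q^E_L\circ f\circ I^X_Y$ runs from the finite space $Y$ into the finite-dimensional space $E/L$, and by the very definition of $A_0$ on finite pieces its $A_0$-norm equals $\Lipa(Q^E_L\circ f\circ I^X_Y)$, reading $E/L$ as its own bidual. Hence the formula defining $\left\|\cdot\right\|_A$ coincides term by term with the formula defining $\left\|\cdot\right\|_{A_0^{\max}}$ in Lemma \ref{lemma-maximal-ideal}, and the two underlying sets agree as well. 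Lemma \ref{lemma-maximal-ideal} then delivers at once that $A=A_0^{\max}$ is a generic Lipschitz operator Banach ideal, and the remark following that lemma gives that it is maximal.

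It then remains to verify that $A\sim\alpha$. For a finite pointed metric space $X$ and a finite-dimensional Banach space $E$, the ``moreover'' clause of Lemma \ref{lemma-maximal-ideal} gives $A(X,E^*)=A_0^{\max}(X,E^*)=A_0(X,E^*)$ isometrically, while the definition of $A_0$ together with Theorem \ref{teo-dual} identifies $A_0(X,E^*)$ isometrically with $\Lipa(X,E^*)$; chaining these produces exactly the required relation $A(X,E^*)=\Lipa(X,E^*)$. The one point that demands care throughout is the bookkeeping of the canonical bidual identifications: because $\kappa_E$ and $\kappa_{E/L}$ are surjective isometries in finite dimensions, and because the metric mapping property of $\alpha$ applied to $\kappa_E$ and $\kappa_E^{-1}$ makes $\mathrm{id}\boxtimes\kappa_E$ an isometry $X\boxtimes_\alpha E\to X\boxtimes_\alpha E^{**}$, the cross-norms $\alpha(\cdot;X,E)$ and $\alpha(\cdot;X,E^{**})$ used implicitly above agree under the identification. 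This compatibility is exactly what lets the dual-space description of $\Lipa$ from Theorem \ref{teo-dual} slot cleanly into the ideal framework, so no new analytic estimate is needed. I therefore expect the main obstacle to be organizational rather than analytic: keeping the finite-dimensional reflexivity identifications consistent so that the three cited results compose without slippage.
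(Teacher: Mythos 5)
Your proposal is correct and follows essentially the same route as the paper's proof: interpret $\Lipa$ on finite pointed metric spaces and finite-dimensional Banach spaces (using that such spaces are dual spaces), invoke Proposition \ref{prop-metric-mapping-property-equivalent-to-ideal-property} together with Theorem \ref{teo-dual} to see this is a $\fin$-generic Lipschitz operator Banach ideal, recognize $A$ as its maximal hull, and conclude via Lemma \ref{lemma-maximal-ideal}. The only difference is that you spell out the reflexivity/bidual bookkeeping (via $\kappa_E$ and the metric mapping property) that the paper compresses into the remark that $E/L$ ``is a dual space,'' which is a harmless and indeed clarifying elaboration.
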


\begin{proof}
First, a word about the definition:
note that since $E/L$ is finite-dimensional, it is a dual space and thus it makes sense to consider the $\Lipa$ norm of the map $Q^E_L \circ f \circ I^X_Y \colon  Y \to E/L$.
Since $\alpha$ is a
$\fin$-generic Lipschitz cross-norm, Proposition \ref{prop-metric-mapping-property-equivalent-to-ideal-property} implies that $\Lipa$ is a $\fin$-generic Lipschitz operator Banach ideal.
Therefore, from Lemma \ref{lemma-maximal-ideal}, $A$ is a maximal generic Lipschitz operator Banach ideal that agrees isometrically with $\Lipa$ whenever the pointed metric space is finite and the Banach space is finite-dimensional,
so $A \sim \alpha$.
\end{proof}

\begin{lemma}\label{from-ideal-to-cross-norm}
Let $A$ be a $\fin$-generic Lipschitz operator Banach ideal.
For a pointed metric space $X$, a Banach space $E$ and $u \in X \boxtimes E$,
define
$$
\alpha(u; X, E) = \inf \left\{ \sup\left\{\left|f(u)\right| \colon \left\| f\colon X_0 \to E_0^* \right\|_A \le 1 \right\} \colon X_0 \in \mfin(X), E_0 \in \fin(E), u \in X_0 \boxtimes E_0 \right\}.
$$
Then $\alpha$ is a finitely generated generic Lipschitz cross-norm associated to $A$.
\end{lemma}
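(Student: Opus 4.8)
The plan is to reduce everything to the finite-dimensional duality already packaged in Proposition \ref{prop-metric-mapping-property-equivalent-to-ideal-property} and to the extension procedure of Lemma \ref{lemma-finitely-generated-cross-norm}, so that the genuinely new work is confined to a single finite-dimensional computation. For a finite pointed metric space $X_0$ and a finite-dimensional Banach space $E_0$, write
$$
\alpha_0(u; X_0, E_0) = \sup\left\{|f(u)| \colon f \in A(X_0, E_0^*),\ \|f\|_A \le 1\right\}, \qquad u \in X_0 \boxtimes E_0,
$$
so that the norm $\alpha$ defined in the statement is exactly $\inf\{\alpha_0(u; X_0, E_0) \colon X_0 \in \mfin(X),\ E_0 \in \fin(E),\ u \in X_0 \boxtimes E_0\}$.

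First I would analyze $\alpha_0$ on finite spaces. Since $E_0^*$ is finite-dimensional, every $f \in \Lipo(X_0, E_0^*)$ is Lipschitz finite-rank; writing $f = \sum_j g_j \cdot \phi_j$ and using properties (i) and (ii) of Definition \ref{defn-generic-operator ideal}, one sees that $A(X_0, E_0^*) = \Lipo(X_0, E_0^*)$ as sets, carrying the norm $\|\cdot\|_A$. The bilinear pairing $\langle u, f\rangle = u(f)$ between $X_0 \boxtimes E_0$ and $\Lipo(X_0, E_0^*)$ is nondegenerate by \cite[Corollary 1.8 and Proposition 1.6]{ccjv}, and since both spaces have the same finite dimension it is in fact a perfect pairing. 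Consequently $\alpha_0$ is precisely the norm on $X_0 \boxtimes E_0$ dual to $(A(X_0,E_0^*), \|\cdot\|_A)$; in particular it is a genuine norm, and finite-dimensional biduality gives $(X_0 \boxtimes_{\alpha_0} E_0)^* = A(X_0, E_0^*)$ isometrically.

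Next I would feed this into Proposition \ref{prop-metric-mapping-property-equivalent-to-ideal-property}. Since finite-dimensional spaces are reflexive and $A$ is preserved by the bijective isometry $\kappa_E$, the preceding identity (applied with $E_0 = E^*$) rewrites, for an arbitrary finite-dimensional $E$, as $A(X_0, E) = (X_0 \boxtimes_{\alpha_0} E^*)^*$ isometrically. As $A$ is a $\fin$-generic Lipschitz operator Banach ideal, the proposition then yields that $\alpha_0$ is a $\fin$-generic Lipschitz cross-norm. Finally, observing that the $\alpha$ of the statement is literally the extension $\theta$ of $\alpha_0$ built in Lemma \ref{lemma-finitely-generated-cross-norm}, that lemma shows $\alpha$ is a finitely generated generic Lipschitz cross-norm and that $\alpha = \alpha_0$ on all finite spaces. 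Association is then immediate: for finite $X_0$ and finite-dimensional $E_0$, Theorem \ref{teo-dual} gives $\Lipa(X_0, E_0^*) = (X_0 \boxtimes_{\alpha_0} E_0)^* = A(X_0, E_0^*)$ isometrically, that is, $A \sim \alpha$.

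The main obstacle I anticipate is the first step: carefully verifying the perfect pairing between $X_0 \boxtimes E_0$ and $\Lipo(X_0, E_0^*)$ in finite dimensions and the resulting isometric identification of dual norms, together with correctly threading the $E \leftrightarrow E^*$ and reflexivity bookkeeping needed to match the exact form $A(X,E) = (X \boxtimes_\alpha E^*)^*$ demanded by Proposition \ref{prop-metric-mapping-property-equivalent-to-ideal-property}. Once this finite-dimensional duality is pinned down, the global statement follows formally from the two cited lemmas.
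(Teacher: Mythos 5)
Your proof is correct and follows essentially the same route as the paper's: define $\alpha_0$ on finite spaces by duality with $A$, use Proposition \ref{prop-metric-mapping-property-equivalent-to-ideal-property} to conclude that $\alpha_0$ is a $\fin$-generic Lipschitz cross-norm, and then recognize $\alpha$ as the finitely generated extension of $\alpha_0$ from Lemma \ref{lemma-finitely-generated-cross-norm}, which also yields $A \sim \alpha$. The only difference is that you carefully justify the finite-dimensional identity $(X_0 \boxtimes_{\alpha_0} E_0)^* = A(X_0,E_0^*)$ (via the set equality $A(X_0,E_0^*)=\Lipo(X_0,E_0^*)$, the perfect pairing, and biduality, plus the $E \leftrightarrow E^*$ bookkeeping), which the paper simply asserts.
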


\begin{proof}
For every finite pointed metric space $X$ and every finite-dimensional Banach space $E$, consider the norm $\alpha_0( \cdot ; X, E)$ on $X \boxtimes E$ given by duality with $A$
$$
\alpha_0(u; X, E) =\left\{ \sup\left|f(u)\right| \colon \left\|f \colon  X \to E^* \right\|_A \le 1 \right\},
$$
so that 
$(X \boxtimes_{\alpha_0} E^*)^* = A(X, E)$.
From Proposition \ref{prop-metric-mapping-property-equivalent-to-ideal-property}, it follows that $\alpha_0$ is a $\fin$-generic Lipschitz cross-norm.
By definition, $\alpha$ is the obtained from $\alpha_0$ by means of the procedure in Lemma \ref{lemma-finitely-generated-cross-norm}, which implies that $\alpha$ is a finitely generated generic Lipschitz cross-norm that agrees with $\alpha_0$ on $X\boxtimes E$ whenever the pointed metric space $X$ is finite and the Banach space $E$ is finite-dimensional, so $A \sim \alpha$.
\end{proof}

The previous two lemmas show that:
\begin{enumerate}
 \item For every $\fin$-generic Lipschitz cross-norm, there is a maximal generic Lipschitz operator Banach ideal $A$ such that $A \sim \alpha.$
 \item For every $\fin$-generic Lipschitz operator Banach ideal, there is a finitely generated generic Lipschitz cross-norm $\alpha$ such that $A \sim \alpha$.
\end{enumerate}
Since both finitely generated generic Lipschitz cross-norms and maximal generic Lipschitz operator Banach ideals are determined by their behavior on finite pointed metric spaces and finite-dimensional Banach spaces, these constructions show that the relation $\sim$ is a one-to-one correspondence between finitely generated generic Lipschitz cross-norms and maximal generic Lipschitz operator Banach ideals.

%-------------------------------------------------------------------------
%-------------------------------------------------------------------------
%-------------------------------------------------------------------------
\subsection{Two basic lemmas for finitely generated generic Lipschitz cross-norms}

According to \cite[Section 13]{Defant-Floret}, there are five lemmas that are basic for the understanding and use of tensor norms.
Here we prove Lipschitz versions of the two we will need later.

Every $\varphi \in (X\widehat{\boxtimes}_\pi E)^* = \Lipo(X,E^*)$ has a canonical extension $\varphi^\wedge \in (X \widehat{\boxtimes}_\pi E^{**})^* = \Lipo(X,E^{***})$, characterized by satisfying the relation
$$
\left\langle \varphi^\wedge,\delta_{(x,y)} \boxtimes v^{**}\right\rangle
=\left\langle v^{**},L_\varphi(x) - L_\varphi(y)\right\rangle ,%_{E^{**},E^*},
$$
where $L_\varphi:=\Lambda^{-1}(\varphi) \in \Lipo(X,E^*)$ is the Lipschitz operator associated to $\varphi$ given in Theorem \ref{teo-dual}. The following lemma tells us what happens if in fact $\varphi \in  (X \widehat{\boxtimes}_\alpha E)^*$,
compare to \cite[Lemma 13.2]{Defant-Floret}.

\begin{lemma}[Extension lemma]
 Let  $\varphi \in (X\widehat{\boxtimes}_\pi E)^*$ and let $\alpha$ be a finitely generated generic Lipschitz cross-norm. Then $ \varphi \in  (X\widehat{\boxtimes}_\alpha E)^*$ if and only if $\varphi^\wedge \in (X \widehat{\boxtimes}_\alpha E^{**})^*$. In this case,
 $\left\|\varphi\right\|_{(X \widehat{\boxtimes}_\alpha E)^*} = \left\| \varphi^\wedge \right\|_{(X \widehat{\boxtimes}_\alpha E^{**})^*}$.
\end{lemma}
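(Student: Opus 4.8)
The plan is to prove the biconditional by establishing the norm identity $\n{\varphi}_{(X\widehat{\boxtimes}_\alpha E)^*} = \n{\varphi^\wedge}_{(X\widehat{\boxtimes}_\alpha E^{**})^*}$ directly, understanding both sides as elements of $[0,\infty]$; then $\varphi$ lies in the $\alpha$-dual exactly when this common value is finite, which is exactly when $\varphi^\wedge$ does. I would prove the two inequalities separately. The easy one, $\n{\varphi}_\alpha \le \n{\varphi^\wedge}_\alpha$, rests on the observation that $\varphi = \varphi^\wedge \circ (\mathrm{id}_X \boxtimes \kappa_E)$, which one checks on a generator $\delta_{(x,y)} \boxtimes e$ using the defining relation of $\varphi^\wedge$ and the fact that $\langle \kappa_E(e), \phi\rangle = \langle \phi, e\rangle$. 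Since $\kappa_E$ is an isometry, the metric mapping property of the generic cross-norm $\alpha$ gives $\n{\mathrm{id}_X \boxtimes \kappa_E \colon X\boxtimes_\alpha E \to X\boxtimes_\alpha E^{**}} \le \Lip(\mathrm{id}_X)\n{\kappa_E} = 1$, whence $|\varphi(u)| \le \n{\varphi^\wedge}_\alpha\, \alpha(u)$ for every $u \in X \boxtimes E$.

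The substance is the reverse inequality $\n{\varphi^\wedge}_\alpha \le \n{\varphi}_\alpha$, assuming $\varphi \in (X\widehat{\boxtimes}_\alpha E)^*$. Here I would fix an arbitrary $w = \sum_{k=1}^n \delta_{(x_k,y_k)} \boxtimes v_k^{**} \in X \boxtimes E^{**}$ and aim to show $|\langle \varphi^\wedge, w\rangle| \le \n{\varphi}_\alpha\, \alpha(w; X, E^{**})$. The crucial use of the hypothesis that $\alpha$ is \emph{finitely generated} comes first: given $\epsilon > 0$, I can choose $X_0 \in \mfin(X)$ and $G_0 \in \fin(E^{**})$ with $w \in X_0 \boxtimes G_0$ and $\alpha(w; X_0, G_0) \le \alpha(w; X, E^{**}) + \epsilon$. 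This localizes the computation to a finite metric space and a finite-dimensional piece of $E^{**}$, which is precisely the setting in which one can bring $E^{**}$ back down to $E$.

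At that point I would invoke the principle of local reflexivity, applied to the finite-dimensional subspace $G_0 \subset E^{**}$ and to the finite-dimensional subspace $H := \mathrm{lin}\{L_\varphi(x) - L_\varphi(y) \colon x,y \in X_0\} \subset E^*$ (finite-dimensional because $X_0$ is finite), to produce a linear map $T \colon G_0 \to E$ with $\n{T} \le 1+\epsilon$ satisfying $\langle \phi, T v^{**}\rangle = \langle v^{**}, \phi\rangle$ for all $v^{**} \in G_0$ and $\phi \in H$. Consider then $(I^X_{X_0} \boxtimes T)(w) = \sum_k \delta_{(x_k,y_k)} \boxtimes T(v_k^{**}) \in X \boxtimes E$. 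Using that $\varphi(\delta_{(x,y)} \boxtimes e) = \langle L_\varphi(x) - L_\varphi(y), e\rangle$, the local-reflexivity relation (each $L_\varphi(x_k) - L_\varphi(y_k)$ lies in $H$), and the defining relation of $\varphi^\wedge$, one gets the key identity
$$
\varphi\big( (I^X_{X_0} \boxtimes T)(w) \big) = \sum_{k=1}^n \big\langle v_k^{**}, L_\varphi(x_k) - L_\varphi(y_k)\big\rangle = \langle \varphi^\wedge, w\rangle.
$$
Finally I would estimate the $\alpha$-norm by the metric mapping property applied to $I^X_{X_0} \boxtimes T$, namely $\alpha\big((I^X_{X_0}\boxtimes T)(w); X, E\big) \le \Lip(I^X_{X_0})\n{T}\,\alpha(w; X_0, G_0) \le (1+\epsilon)(\alpha(w; X, E^{**}) + \epsilon)$, so that $|\langle \varphi^\wedge, w\rangle| \le \n{\varphi}_\alpha (1+\epsilon)(\alpha(w; X, E^{**})+\epsilon)$, and let $\epsilon \to 0$.

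The main obstacle is the correct deployment of local reflexivity: one must arrange $H$ to contain every difference $L_\varphi(x)-L_\varphi(y)$ with $x,y\in X_0$ so that $T$ is adjoint-compatible on exactly the functionals produced by $\varphi$, and one must keep the distortion $\n{T}$ within $1+\epsilon$ so the norm estimate closes. The finitely generated hypothesis is what makes this possible, since it reduces the norm on $X\boxtimes_\alpha E^{**}$ to finite-dimensional data on which local reflexivity operates; a routine density argument then extends the resulting bound from $X\boxtimes E^{**}$ to the completion $X\widehat{\boxtimes}_\alpha E^{**}$.
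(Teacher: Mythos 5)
Your proof is correct and follows essentially the same route as the paper's: the easy inequality via contractivity of $id_X \boxtimes \kappa_E$ (metric mapping property), and the hard one via the principle of local reflexivity applied on a finite subset $X_0$ and a finite-dimensional subspace of $E^{**}$, combined with the finitely generated hypothesis. The only cosmetic difference is that you select near-optimal $X_0, G_0$ up front and make explicit the finite-dimensional subspace $H$ of functionals on which local reflexivity must be adjoint-compatible, whereas the paper works with arbitrary $X_0, E_0$ and takes the infimum at the end; these are logically interchangeable.
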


\begin{proof}
The metric mapping property implies that the canonical inclusion map $id_X \boxtimes \kappa_E \colon  X \boxtimes_\alpha E \to X \boxtimes_\alpha E^{**}$ is contractive, and hence $\left\|\varphi\right\|_{(X \widehat{\boxtimes}_\alpha E)^*} \le \left\| \varphi^\wedge \right\|_{(X\widehat{\boxtimes}_\alpha E^{**})^*}$.

For the converse, take $u_0 \in X \boxtimes E^{**}$ and $X_0 \in \mfin(X), E_0 \in \fin(E^{**})$ such that $u_0 \in X_0 \boxtimes E_0$.
By the principle of local reflexivity (even in a weak form as in \cite[Subsection 6.5]{Defant-Floret}), for every $\varepsilon>0$ there exists $R \in \L(E_0, E)$ with $\left\|R\right\| \le 1 +\varepsilon$ such that, for all $v^{**} \in E_0$ and $x,y \in X_0$,
$$
\left\langle v^{**},L_\varphi(x) - L_\varphi(y)\right\rangle%_{E^{**},E^*}
=\left\langle L_\varphi(x) - L_\varphi(y),Rv^{**}\right\rangle .%_{E^*,E}.
$$
This means that
$$
\left\langle \varphi^\wedge,\delta_{(x,y)} \boxtimes v^{**}\right\rangle
=\left\langle \varphi,(id_X \boxtimes R)(\delta_{(x,y)} \boxtimes v^{**}) \right\rangle,
$$
therefore
$$
\left\langle \varphi^\wedge,u_0\right\rangle
=\left\langle \varphi,(id_X \boxtimes R)(u_0)\right\rangle,
$$
and hence
$$
\left|\left\langle \varphi^\wedge,u_0\right\rangle \right| 
\le \left\|\varphi\right\|\left\|R\right\| \alpha(u_0; X_0, E_0)
\le (1+\varepsilon) \left\|\varphi\right\| \alpha(u_0; X_0, E_0),
$$
which implies the result since $\alpha$ is finitely generated.
\end{proof}

Lipschitz cross-norms generally do not respect subspaces, but the embedding into the bidual is respected when the Lipschitz cross-norm is finitely generated.
Compare to \cite[Lemma 13.3]{Defant-Floret}.

\begin{lemma}[Embedding lemma]
If $\alpha$ is a finitely generated generic Lipschitz cross-norm, then the mapping $id_X \boxtimes \kappa_E \colon  X \widehat{\boxtimes}_\alpha E \to X\widehat{\boxtimes}_\alpha E^{**}$ is an isometry for every pointed metric space $X$ and every Banach space $E$.
\end{lemma}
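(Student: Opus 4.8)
The plan is to reduce the statement to the Extension lemma by a short duality argument, so that no fresh use of local reflexivity is needed at this stage. Throughout I fix a pointed metric space $X$, a Banach space $E$, and an element $u=\sum_{i=1}^n\delta_{(x_i,y_i)}\boxtimes e_i\in X\boxtimes E$, and I write $w=(id_X\boxtimes\kappa_E)(u)=\sum_{i=1}^n\delta_{(x_i,y_i)}\boxtimes\kappa_E(e_i)\in X\boxtimes E^{**}$.

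First I would record the easy inequality exactly as in the opening of the proof of the Extension lemma: since $\Lip(id_X)\le 1$ and $\left\|\kappa_E\right\|\le 1$, the metric mapping property shows that $id_X\boxtimes\kappa_E\colon X\boxtimes_\alpha E\to X\boxtimes_\alpha E^{**}$ is contractive, so $\alpha(w)\le\alpha(u)$. Everything then comes down to proving the reverse inequality $\alpha(u)\le\alpha(w)$.

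For the reverse inequality I would pass to duals. By the Hahn--Banach theorem, $\alpha(u)=\sup\{|\varphi(u)|:\varphi\in(X\widehat{\boxtimes}_\alpha E)^*,\ \left\|\varphi\right\|\le 1\}$ and $\alpha(w)=\sup\{|\psi(w)|:\psi\in(X\widehat{\boxtimes}_\alpha E^{**})^*,\ \left\|\psi\right\|\le 1\}$; note that since $\alpha\le\pi$ every such $\varphi$ lies in $(X\widehat{\boxtimes}_\pi E)^*=\Lipo(X,E^*)$, so $L_\varphi=\Lambda^{-1}(\varphi)$ is defined. The crux is to feed each competitor $\varphi$ for the first supremum into the second via its canonical extension $\varphi^\wedge$: the Extension lemma guarantees $\varphi^\wedge\in(X\widehat{\boxtimes}_\alpha E^{**})^*$ with $\left\|\varphi^\wedge\right\|=\left\|\varphi\right\|$, and the computation I would carry out, using the defining relation of $\varphi^\wedge$ together with $\langle\kappa_E(e),\phi\rangle=\langle\phi,e\rangle$ and Theorem \ref{teo-dual}, is
\begin{align*}
\langle\varphi^\wedge,w\rangle
&=\sum_{i=1}^n\langle\kappa_E(e_i),L_\varphi(x_i)-L_\varphi(y_i)\rangle
=\sum_{i=1}^n\langle L_\varphi(x_i)-L_\varphi(y_i),e_i\rangle
=\varphi(u).
\end{align*}
Hence $|\varphi(u)|=|\varphi^\wedge(w)|\le\alpha(w)$, and taking the supremum over all admissible $\varphi$ yields $\alpha(u)\le\alpha(w)$.

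Combining the two inequalities gives $\alpha(w)=\alpha(u)$ for every $u\in X\boxtimes E$, so $id_X\boxtimes\kappa_E$ is isometric on the dense subspace $X\boxtimes_\alpha E$ of $X\widehat{\boxtimes}_\alpha E$ and therefore extends uniquely to an isometry $X\widehat{\boxtimes}_\alpha E\to X\widehat{\boxtimes}_\alpha E^{**}$, as claimed. I expect no genuine obstacle in this reduction, as it is purely formal; the real difficulty is the principle of local reflexivity, but that has already been discharged inside the Extension lemma, which is precisely why I would structure the argument to invoke that lemma rather than to reprove its content here.
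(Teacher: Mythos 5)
Your proof is correct and follows essentially the same route as the paper's: the contractivity of $id_X \boxtimes \kappa_E$ from the metric mapping property for one inequality, and Hahn--Banach duality combined with the Extension lemma (via the identity $\varphi^\wedge(w)=\varphi(u)$) for the reverse. The only difference is cosmetic — you write out explicitly the computation that the paper compresses into its chain of suprema under an abuse of notation.
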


\begin{proof}
As already pointed out above, the metric mapping property implies that $\alpha(u;X,E^{**})\le\alpha(u;X,E)$ for any $u\in X \boxtimes E$ (in an abuse of notation, we are not writing the map $id_X \boxtimes \kappa_E$).
 Now, by the extension lemma,
 \begin{align*}
 \alpha(u; X, E) &= \sup\left\{\left|\left\langle \varphi,u\right\rangle\right|
 \colon \varphi \in (X \widehat{\boxtimes}_\alpha E)^*, \left\|\varphi\right\|_{(X \widehat{\boxtimes}_\alpha E)^*} \le 1 \right\} \\
 &= \sup \left\{\left|\left\langle \varphi^\wedge, u\right\rangle\right| \colon \varphi \in (X \widehat{\boxtimes}_\alpha E)^*, \left\|\varphi\right\|_{(X \widehat{\boxtimes}_\alpha E)^*} \le 1\right\} \\
 &\leq \sup \left\{\left|\left\langle \psi,u\right\rangle\right| \colon \psi \in (X \widehat{\boxtimes}_\alpha E^{**})^*, \left\|\psi\right\|_{(X \widehat{\boxtimes}_\alpha E^{**})^*} \le 1  \right\} \\
 &= \alpha(u; X, E^{**}),
 \end{align*}
giving the reverse inequality. 
\end{proof}

%-------------------------------------------------------------------------
%-------------------------------------------------------------------------
%-------------------------------------------------------------------------

We are now ready to present the main result of this section.
Modulo technical assumptions,
philosophically it is a combination of Theorems \ref{teo-caracter-ideal} and \ref{teo-carac-dual-Lipschitz operator Banach space}:
Lipschitz cross-norms that are both uniform and dualizable give rise to a very satisfactory duality theory.

\subsection{The representation theorem}

\begin{theorem}
Let $A$ be a maximal generic Lipschitz operator Banach ideal and $\alpha$ a finitely generated generic Lipschitz cross-norm which are associated with each other. Then, for every pointed metric space $X$ and every Banach space $E$, the relations
\begin{align}
A(X,E^*) &= (X\widehat{\boxtimes}_\alpha E)^* \label{eqn-representation-thm-1},\\
A(X,E)   &= (X\widehat{\boxtimes}_\alpha E^*)^* \cap \Lipo(X,E) \label{eqn-representation-thm-2}
\end{align}
hold isometrically.
\end{theorem}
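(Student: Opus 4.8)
The plan is to bootstrap the finite-level association $A\sim\alpha$ to arbitrary spaces, using the maximality of $A$ on one side and the finite generation of $\alpha$ (through the Extension lemma) on the other, with Theorem \ref{teo-dual} translating duals of Lipschitz tensor products into spaces of cross-norm-Lipschitz operators; I read all norms as extended reals so that membership and norm equalities are established at once. For \eqref{eqn-representation-thm-1}, Theorem \ref{teo-dual} gives $(X\widehat{\boxtimes}_\alpha E)^*=\Lipa(X,E^*)$ isometrically, so it suffices to prove $\|f\|_A=\Lipa(f)$ for every $f\in\Lipo(X,E^*)$. Since $A$ is maximal,
$$
\|f\|_A=\sup\left\{\left\|Q^{E^*}_L\circ f\circ I^X_Y\right\|_A\colon Y\in\mfin(X),\ L\in\cofin(E^*)\right\}.
$$
For finite $Y$ and $L\in\cofin(E^*)$ the quotient $E^*/L$ is finite-dimensional, hence equals $G^*$ with $G=(E^*/L)^*\cong L^\perp\subseteq E^{**}$, and the association lets me replace $\|Q^{E^*}_L\circ f\circ I^X_Y\|_A$ by $\Lipa(Q^{E^*}_L\circ f\circ I^X_Y)$.

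To get $\Lipa(f)\le\|f\|_A$ I would use restriction maps. Fix $u\in X\boxtimes E$ with $\alpha(u)\le1$; finite generation yields $X_0\in\mfin(X)$, $E_0\in\fin(E)$ with $u\in X_0\boxtimes E_0$ and $\alpha(u;X_0,E_0)<1+\varepsilon$. With $R=(I^E_{E_0})^*\colon E^*\to E_0^*$ a direct computation gives $u(f)=u(R\circ f\circ I^X_{X_0})$, whence, using $A(X_0,E_0^*)=\Lipa(X_0,E_0^*)$ and the ideal property (with $\|R\|=\|I^E_{E_0}\|\le1$),
$$
|u(f)|\le\Lipa\!\left(R\circ f\circ I^X_{X_0}\right)\alpha(u;X_0,E_0)\le(1+\varepsilon)\left\|R\circ f\circ I^X_{X_0}\right\|_A\le(1+\varepsilon)\|f\|_A.
$$
Letting $\varepsilon\to0$ and taking the supremum over $u$ gives the inequality.

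The reverse inequality $\|f\|_A\le\Lipa(f)$ is the crux and is where local reflexivity enters. Assuming $f\in\Lipa(X,E^*)$, I regard it as $\varphi\in(X\widehat{\boxtimes}_\pi E)^*$, which lies in $(X\widehat{\boxtimes}_\alpha E)^*$; the Extension lemma then produces $\varphi^\wedge\in(X\widehat{\boxtimes}_\alpha E^{**})^*$ with $\|\varphi^\wedge\|=\Lipa(f)$. For $w=\sum_j\delta_{(y_j,z_j)}\boxtimes\xi_j\in Y\boxtimes L^\perp$ the defining relation of $\varphi^\wedge$ gives $w(Q^{E^*}_L\circ f\circ I^X_Y)=\langle\varphi^\wedge,w\rangle$, so using $\alpha(w;X,E^{**})\le\alpha(w;Y,L^\perp)$ from the metric mapping property,
$$
\Lipa\!\left(Q^{E^*}_L\circ f\circ I^X_Y\right)=\sup\left\{\left|\langle\varphi^\wedge,w\rangle\right|\colon w\in Y\boxtimes L^\perp,\ \alpha(w;Y,L^\perp)\le1\right\}\le\|\varphi^\wedge\|=\Lipa(f).
$$
Taking the supremum over $Y$ and $L$ finishes \eqref{eqn-representation-thm-1}. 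The hard part is exactly this matching of the finite quotients $E^*/L$ with the finite-dimensional subspaces $L^\perp$ of $E^{**}$, which is precisely what the principle of local reflexivity inside the Extension lemma provides.

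For \eqref{eqn-representation-thm-2} I would apply \eqref{eqn-representation-thm-1} with $E^*$ in place of $E$ to obtain $(X\widehat{\boxtimes}_\alpha E^*)^*=\Lipa(X,E^{**})=A(X,E^{**})$ isometrically, and read the intersection as $\{f\in\Lipo(X,E)\colon\kappa_E\circ f\in\Lipa(X,E^{**})\}$ normed by $\Lipa(\kappa_E\circ f)$. It then remains to prove $\|f\|_A=\|\kappa_E\circ f\|_A$. One inequality is immediate from the ideal property and $\|\kappa_E\|=1$. For the other, maximality gives $\|f\|_A=\sup_{Y,L}\|Q^E_L\circ f\circ I^X_Y\|_A$ with $L\in\cofin(E)$; since $E/L$ is reflexive, naturality of $\kappa$ yields $(Q^E_L)^{**}\circ\kappa_E=Q^E_L$, so that $Q^E_L\circ f\circ I^X_Y=(Q^E_L)^{**}\circ(\kappa_E\circ f)\circ I^X_Y$ and the ideal property gives $\|Q^E_L\circ f\circ I^X_Y\|_A\le\|\kappa_E\circ f\|_A$. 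Taking the supremum completes the proof.
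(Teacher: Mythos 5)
Your proof is correct, but it inverts the architecture of the paper's argument. The paper proves \eqref{eqn-representation-thm-2} first and directly: for $f\in\Lipo(X,E)$, maximality reduces $\left\|f\right\|_A$ to the norms of the maps $Q^E_L\circ f\circ I^X_Y$ with $Y\in\mfin(X)$, $L\in\cofin(E)$; since $(E/L)^*$ is the annihilator $L^0\in\fin(E^*)$, the finite-level association turns each such norm into a supremum over $u\in Y\boxtimes L^0$, and the identity $u(Q^E_L\circ f\circ I^X_Y)=u(f)$ together with finite generation of $\alpha$ closes the loop --- crucially, no bidual and no local reflexivity enter here, because the relevant dual tensors already live in $X\boxtimes E^*$. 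The extension lemma is then invoked exactly once, formally, to deduce \eqref{eqn-representation-thm-1} from \eqref{eqn-representation-thm-2} applied to $E^*$. You instead prove \eqref{eqn-representation-thm-1} directly, and this forces local reflexivity into the heart of the argument: for $f\colon X\to E^*$ the duals $(E^*/L)^*=L^\perp$ sit inside $E^{**}$, so you must pass to the canonical extension $\varphi^\wedge\in(X\widehat{\boxtimes}_\alpha E^{**})^*$ to estimate $\Lipa(Q^{E^*}_L\circ f\circ I^X_Y)$; your verification that the defining relation of $\varphi^\wedge$ matches $w(Q^{E^*}_L\circ f\circ I^X_Y)$ for $w\in Y\boxtimes L^\perp$ is the crux, and it is right. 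You then recover \eqref{eqn-representation-thm-2} from \eqref{eqn-representation-thm-1} via the regularity identity $\left\|f\right\|_A=\left\|\kappa_E\circ f\right\|_A$, proved independently from maximality and the naturality relation $(Q^E_L)^{**}\circ\kappa_E=\kappa_{E/L}\circ Q^E_L$ (with $(E/L)^{**}$ identified with $E/L$); this is sound and non-circular, and it in effect establishes Corollary \ref{cor-maximal-implies-regular} as a standalone lemma rather than as a consequence of the theorem, which is how the paper obtains it. The trade-off: the paper's route is more economical, with local reflexivity appearing only once (packaged in the extension lemma) and regularity falling out for free afterwards; your route costs a heavier direct argument for \eqref{eqn-representation-thm-1} and a separate proof of regularity, but it makes explicit exactly where local reflexivity is indispensable and yields the regularity of maximal ideals as an independent byproduct.
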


\begin{proof}
First, take a look at the diagram
$$
\xymatrix{
\varphi \in (X \widehat{\boxtimes}_\alpha E)^* \ar@{^{(}->}[r] \ar@{^{(}->}[d] &(X \widehat{\boxtimes}_\pi E)^* \ar@{^{(}->}[d] \ar@{=}[r] & \Lipo(X,E^*)  \\
\varphi^\wedge \in (X \widehat{\boxtimes}_\alpha E^{**})^* \ar@{^{(}->}[r] &(X \widehat{\boxtimes}_\pi E^{**})^*. & \\
}	
$$
The vertical arrows are isometries thanks to the embedding lemma, whereas the horizontal arrows are continuous because $\alpha \le \pi$.
By the extension lemma, \eqref{eqn-representation-thm-1} will follow from \eqref{eqn-representation-thm-2}.

In order to prove \eqref{eqn-representation-thm-2}, we need to show that for $f \in\Lipo(X,E)$, $f$ belongs to $A(X,E)$ if and only if the associated linear map $\varphi_f \colon  X \boxtimes_\alpha E^{*} \to \K$ is continuous, that is, there is $C>0$ such that
\begin{equation}\label{eqn-characterization-by-duality}
 |u(f)| \le C \alpha(u; X, E^*),\quad \forall u \in X \boxtimes E^*.
\end{equation}
Since $A$ is maximal, it is clear that $f \in A(X,E)$ with $\left\|f\right\|_A \le C$ if and only if
\begin{equation}\label{eqn-characterization-by-maximality} 
\left\| Q^E_L \circ f \circ I^X_Y \right\|_A \le C, \quad \forall Y \in\mfin(X),\; \forall L \in\cofin(E).
\end{equation}
Denote by $L^0$ the annihilator of $L$. Since $A(Y,E/L) = \big(Y \boxtimes_\alpha (E/L)^* \big)^* = (Y \boxtimes_\alpha L^0)^*$, and noticing that $L^0$ varies over all spaces in $\fin(E^*)$ when $L$ varies over all spaces in $\cofin(E)$, \eqref{eqn-characterization-by-maximality}
is equivalent to 
\begin{equation}\label{eqn-characterization-by-maximality-II} 
\left|u(Q^E_L \circ f \circ I^X_Y) \right|
\le C \alpha(u; Y, L^0),\quad \forall u\in Y \boxtimes L^0,
\end{equation}
whenever $Y\in\mfin(X)$ and $L^0 \in\fin(E^*)$.
Now, for such an $u\in Y \boxtimes L^0$, note that since both $I^X_Y$ and $(Q^E_L)^*$ are canonical injections,
$$
u(Q^E_L \circ f \circ I^X_Y)  = \big( (Q^E_L)^* u\big)(f \circ I^X_Y)  = u(f).
$$
Therefore \eqref{eqn-characterization-by-maximality-II} is equivalent to \eqref{eqn-characterization-by-duality} because $\alpha$ is finitely generated,
finishing the proof.
\end{proof}

We can now show that maximal generic Lipschitz operator Banach ideals can be thought of as those arising as $\Lip_\alpha$ for a finitely generated generic Lipschitz cross-norm $\alpha$.

\begin{corollary}\label{cor-maximal-iff-finitely-generated}
Let $A$ be a generic Lipschitz operator Banach ideal.
Then $A$ is maximal if and only if there exists a finitely generated generic Lipschitz cross-norm $\alpha$ such that, for every pointed metric space $X$ and every Banach space $E$,
\begin{equation}\label{eqn-corollary-representation-thm-1}
A(X,E) = (X \widehat{\boxtimes}_\alpha E^*)^* \cap \Lipo(X,E) 
\end{equation}
holds isometrically.
In this case,
\begin{equation}\label{eqn-corollary-representation-thm-2}
A(X,E^*) = (X \widehat{\boxtimes}_\alpha E)^* 
\end{equation}
also holds isometrically for every pointed metric space $X$ and every Banach space $E$.
\end{corollary}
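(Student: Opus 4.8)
The plan is to prove the biconditional by routing both implications through the finite-dimensional association relation $\sim$ together with the representation theorem, using Lemma \ref{from-ideal-to-cross-norm} to pass from the ideal to an associated cross-norm. The final ``in this case'' assertion \eqref{eqn-corollary-representation-thm-2} will then be immediate from relation \eqref{eqn-representation-thm-1} of the representation theorem, applied once $A$ is known to be maximal.

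For the forward implication, suppose $A$ is maximal. First I would restrict $A$ to finite pointed metric spaces and finite-dimensional Banach spaces, obtaining a $\fin$-generic Lipschitz operator Banach ideal. Applying Lemma \ref{from-ideal-to-cross-norm} produces a finitely generated generic Lipschitz cross-norm $\alpha$ with $A \sim \alpha$. Since $A$ is maximal, $\alpha$ is finitely generated, and the two are associated, the representation theorem applies verbatim and yields both \eqref{eqn-representation-thm-1} and \eqref{eqn-representation-thm-2}; the latter is exactly the relation \eqref{eqn-corollary-representation-thm-1} we must produce.

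For the converse, assume a finitely generated generic Lipschitz cross-norm $\alpha$ satisfying \eqref{eqn-corollary-representation-thm-1} is given, and the goal is $A = A^{\max}$. The key first step is to extract the association $A \sim \alpha$ from \eqref{eqn-corollary-representation-thm-1} by specializing to finite $X$ and finite-dimensional $E$. Replacing $E$ by $E^*$ (finite-dimensional, so $E^{**} = E$) and using $X \widehat{\boxtimes}_\alpha E = X \boxtimes_\alpha E$ in the finite setting, Theorem \ref{teo-dual} identifies $(X\widehat{\boxtimes}_\alpha E)^*$ with $\Lipa(X,E^*) \subseteq \Lipo(X,E^*)$, so the intersection in \eqref{eqn-corollary-representation-thm-1} collapses to $\Lipa(X,E^*)$, whence $A(X,E^*) = (X\boxtimes_\alpha E)^*$ isometrically, that is, $A \sim \alpha$. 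Since $A^{\max}$ agrees isometrically with $A$ on finite spaces by Lemma \ref{lemma-maximal-ideal}, also $A^{\max} \sim \alpha$; now the representation theorem applied to the maximal ideal $A^{\max}$ and the finitely generated $\alpha$ gives $A^{\max}(X,E) = (X\widehat{\boxtimes}_\alpha E^*)^* \cap \Lipo(X,E)$, which by hypothesis equals $A(X,E)$. Thus $A = A^{\max}$, so $A$ is maximal, and \eqref{eqn-corollary-representation-thm-2} follows from \eqref{eqn-representation-thm-1}.

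I expect the main obstacle to be the bookkeeping in the converse direction: the hypothesis \eqref{eqn-corollary-representation-thm-1} is assumed only for $A$, so the representation theorem cannot be invoked for $A$ directly (we do not yet know it is maximal) but must be applied to $A^{\max}$, and the identification $A = A^{\max}$ is forced only after matching their common value on finite spaces with the finitely generated $\alpha$. Verifying that the intersection with $\Lipo$ genuinely reduces to $\Lipa$ in the finite-dimensional case---so that $A \sim \alpha$ really holds---is the delicate point, relying on $E^{**} = E$ and on the completeness of the finite-dimensional tensor product.
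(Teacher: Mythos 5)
Your proof is correct, and your forward direction coincides with the paper's (restrict to finite spaces, apply Lemma \ref{from-ideal-to-cross-norm} to get an associated finitely generated $\alpha$, then quote the representation theorem). Where you genuinely diverge is in the converse. The paper, after asserting that the hypothesis forces $A \sim \alpha$ (by appeal to ``the proof of the representation theorem''), re-enters that proof: it shows $\left\|f\right\|_{A^{\max}} \le \left\|f\right\|_A$ from the ideal property, and then, for $f \in A^{\max}(X,E)$, runs the internal chain \eqref{eqn-characterization-by-maximality} $\Rightarrow$ \eqref{eqn-characterization-by-duality} to land $f$ back in $A(X,E)$ via the hypothesis. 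You instead treat the representation theorem as a black box: you first extract $A \sim \alpha$ directly from \eqref{eqn-corollary-representation-thm-1} by specializing to finite $X$ and finite-dimensional $E$, replacing $E$ by $E^*$ and using $E^{**}=E$ together with Theorem \ref{teo-dual} to collapse the intersection to $\Lipa(X,E^*)$ --- this is cleaner and more self-contained than the paper's citation of a proof; you then transfer the association to $A^{\max}$ via Lemma \ref{lemma-maximal-ideal} (isometric agreement on finite spaces), apply the representation theorem to the maximal ideal $A^{\max}$, and match the resulting formula against the hypothesis to force $A = A^{\max}$. Both arguments rest on the same ingredients (the association relation, the representation theorem, the maximal hull), but yours never unpacks the representation theorem's internals; the price is the extra --- and correctly identified --- observation that the association passes from $A$ to $A^{\max}$, which is precisely what licenses the black-box application. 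Your handling of the delicate point (that the intersection with $\Lipo$ reduces to $\Lipa$ in the finite-dimensional case, using completeness of the finite-dimensional tensor product and the isometric identification $X\boxtimes_\alpha E \cong X\boxtimes_\alpha E^{**}$ coming from the metric mapping property) is sound, and the final assertion \eqref{eqn-corollary-representation-thm-2} follows in both treatments the same way, from \eqref{eqn-representation-thm-1} once maximality and association are in hand.
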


\begin{proof}
Suppose that $A$ is maximal. Let $\alpha$ be the finitely generated generic Lipschitz cross-norm associated to $A$ given by Lemma \ref{from-ideal-to-cross-norm}.
By the representation theorem, \eqref{eqn-corollary-representation-thm-1} holds isometrically.

Now suppose that there is a finitely generated generic Lipschitz cross-norm $\alpha$ such that \eqref{eqn-corollary-representation-thm-1} holds isometrically.
It follows from the proof of the representation theorem that \eqref{eqn-corollary-representation-thm-2} must also hold isometrically, so in particular $A \sim \alpha$.
Let $f \in \Lipo(X,E)$.
If $f \in A(X,E)$ then, by the ideal property and the definition of $A^{\max}$, it follows that $\left\|f\right\|_{A^{\max}} \le \left\|f\right\|_A$.
Now assume that $f \in A^{\max}(X,E)$ with $\left\|f\right\|_{A^{\max}} \le c$.
By definition of $A^{\max}$, this means that \eqref{eqn-characterization-by-maximality} holds.
Following the proof of the representation theorem this in turn implies \eqref{eqn-characterization-by-duality}, which means that $f \in A(X,E)$ with $\left\|f\right\|_A \le c$ because of \eqref{eqn-corollary-representation-thm-1}.
\end{proof}

Another consequence is that a maximal generic Lipschitz operator Banach ideal respects the canonical embeddings into the bidual. 

\begin{corollary}\label{cor-maximal-implies-regular}
A maximal generic Lipschitz operator Banach ideal $A$ is \emph{regular}, which means: for every pointed metric space $X$, every Banach space $E$ and every $f\in \Lipo(X,E)$, $f \in A(X, E)$ if and only if $\kappa_E \circ f \in A(X,E^{**})$; moreover
$$
\left\|f \colon  X \to E \right\|_A =\left\| \kappa_E \circ f \colon  X \to E^{**} \right\|_A.
$$
\end{corollary}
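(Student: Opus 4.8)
The plan is to reduce the statement to the duality characterization furnished by the representation theorem. Since $A$ is maximal, Corollary~\ref{cor-maximal-iff-finitely-generated} provides a finitely generated generic Lipschitz cross-norm $\alpha$ with $A\sim\alpha$ for which \eqref{eqn-representation-thm-2} holds at every pair of spaces. Reading off the proof of the representation theorem (see \eqref{eqn-characterization-by-duality}), for any Banach space $F$ and any $g\in\Lipo(X,F)$ one has
$$
\|g\|_A=\sup\left\{|u(g)|\colon u\in X\boxtimes F^*,\ \alpha(u;X,F^*)\leq 1\right\}\in[0,\infty],
$$
where $u(g)=\sum_i\langle e_i^*,g(x_i)-g(y_i)\rangle$ for $u=\sum_i\delta_{(x_i,y_i)}\boxtimes e_i^*$, and $g\in A(X,F)$ precisely when this supremum is finite. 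Taking $F=E$ and $F=E^{**}$ reduces the whole corollary to proving the single identity $\|f\|_A=\|\kappa_E\circ f\|_A$ of such suprema, the first over $X\boxtimes E^*$ and the second over $X\boxtimes E^{***}$; the equivalence $f\in A(X,E)\Leftrightarrow\kappa_E\circ f\in A(X,E^{**})$ then comes for free, since $\kappa_E\circ f$ is Lipschitz whenever $f$ is.

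The structural fact I would exploit is that $E^*$ sits as a norm-one complemented subspace of $E^{***}$: the embedding $\kappa_{E^*}\colon E^*\to E^{***}$ is isometric, the adjoint $(\kappa_E)^*\colon E^{***}\to E^*$ has norm one, and $(\kappa_E)^*\circ\kappa_{E^*}=id_{E^*}$. Unwinding the canonical pairings gives the key identity: for $v=\sum_i\delta_{(x_i,y_i)}\boxtimes\xi_i\in X\boxtimes E^{***}$,
$$
v(\kappa_E\circ f)=\sum_i\left\langle\xi_i,\kappa_E(f(x_i))-\kappa_E(f(y_i))\right\rangle=\sum_i\left\langle(\kappa_E)^*\xi_i,f(x_i)-f(y_i)\right\rangle=\big((id_X\boxtimes(\kappa_E)^*)v\big)(f),
$$
and, symmetrically, $\big((id_X\boxtimes\kappa_{E^*})u\big)(\kappa_E\circ f)=u(f)$ for every $u\in X\boxtimes E^*$.

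With this in hand, both inequalities follow from the metric mapping property of $\alpha$ (Definition~\ref{defn-generic-cross-norm}(ii)) applied to $id_X$ together with $(\kappa_E)^*$ and $\kappa_{E^*}$, respectively. For $\|f\|_A\leq\|\kappa_E\circ f\|_A$: given $u$ with $\alpha(u;X,E^*)\leq 1$, put $v=(id_X\boxtimes\kappa_{E^*})u$, so that $\alpha(v;X,E^{***})\leq\|\kappa_{E^*}\|\,\alpha(u;X,E^*)\leq 1$ and $|u(f)|=|v(\kappa_E\circ f)|\leq\|\kappa_E\circ f\|_A$. For the reverse: given $v$ with $\alpha(v;X,E^{***})\leq 1$, put $u=(id_X\boxtimes(\kappa_E)^*)v$, so that $\alpha(u;X,E^*)\leq\alpha(v;X,E^{***})\leq 1$ and $|v(\kappa_E\circ f)|=|u(f)|\leq\|f\|_A$. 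Taking suprema yields $\|f\|_A=\|\kappa_E\circ f\|_A$ in $[0,\infty]$, which is exactly the claim.

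Conceptually the proof just says that the finitely generated cross-norm $\alpha$ cannot distinguish $E^*$ from its $1$-complemented copy inside $E^{***}$, so I expect the only real obstacle to be the careful bookkeeping of the several canonical maps and pairings in the displayed identity, together with confirming that the duality characterization of the representation theorem is available in the stated form simultaneously for $E$ and for $E^{**}$. Notably, no appeal to the principle of local reflexivity is needed here, in contrast to the extension and embedding lemmas, precisely because $E^*$ is always complemented in $E^{***}$.
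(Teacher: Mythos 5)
Your proof is correct, but it bridges $A(X,E)$ and $A(X,E^{**})$ by a different mechanism than the paper does. The paper's proof is a two-line deduction from Corollary \ref{cor-maximal-iff-finitely-generated}: it applies \eqref{eqn-corollary-representation-thm-2} with $E^*$ in place of $E$ to get $A(X,E^{**})=(X\widehat{\boxtimes}_\alpha E^*)^*$ isometrically, and observes that the isometric inclusion $A(X,E)\to (X\widehat{\boxtimes}_\alpha E^*)^*$ coming from \eqref{eqn-representation-thm-2} sends $f$ to the same functional as $\kappa_E\circ f$; since \eqref{eqn-corollary-representation-thm-2} is exactly \eqref{eqn-representation-thm-1}, whose proof passes through the extension lemma, the paper's route implicitly rests on the principle of local reflexivity. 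You instead invoke only \eqref{eqn-representation-thm-2}, at the two codomains $E$ and $E^{**}$, which leaves you with two \emph{different} dualities (over $X\boxtimes E^*$ and over $X\boxtimes E^{***}$), and you reconcile them by hand using the norm-one maps $\kappa_{E^*}$ and $(\kappa_E)^*$ with $(\kappa_E)^*\circ\kappa_{E^*}=id_{E^*}$, the two pairing identities $v(\kappa_E\circ f)=\big((id_X\boxtimes(\kappa_E)^*)v\big)(f)$ and $\big((id_X\boxtimes\kappa_{E^*})u\big)(\kappa_E\circ f)=u(f)$, and the metric mapping property of $\alpha$; all of these check out. What each approach buys: the paper's argument is essentially instantaneous once the full representation theorem is available, while yours is longer in bookkeeping but genuinely more elementary in its ingredients --- as you correctly note, it never uses the extension lemma or local reflexivity, only the part of the representation theorem (\eqref{eqn-representation-thm-2}) that is proved directly from maximality and finite generation, plus the canonical $1$-complementation of $E^*$ in $E^{***}$.
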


\begin{proof}
Let $\alpha$ be the finitely generated generic Lipschitz cross-norm given by Corollary \ref{cor-maximal-iff-finitely-generated}.
Notice that then
$$
A(X,E) \to (X \widehat{\boxtimes}_\alpha E^*)^* = A(X, E^{**}),
$$
where the arrow is an isometry.
The desired result follows.
\end{proof}

\subsection{Lipschitz operator ideals between metric spaces}

Some important classes of Lipschitz maps satisfying an ideal property, like the Lipschitz $p$-summing maps or the maps admitting a Lipschitz factorization through a subset of an $L_p$ space, are actually defined for maps between metric spaces.
Thus, it might seem that we are losing something by insisting on having a Banach space as a codomain as it has been done so far in this paper and in the previous works \cite{ccjv,cd11,CD-Lipschitz-factorization}.
Nevertheless, we show next that generic Lipschitz operator Banach ideals satisfying a slightly stronger ideal property can be canonically extended to an ideal of Lipschitz maps between metric spaces.
Recall that $\F(X)$ denotes the Lipschitz-free Banach space of a pointed metric space $X$, and $\delta_X \colon  X \to \F(X)$ the canonical embedding.
For a Banach space $E$, the barycentric map $\beta_E \colon  \F(E) \to E$ is a norm one linear operator with $\beta_E \circ \delta_E = id_E$ (see \cite{Godefroy-Kalton}).

We will say that a generic Lipschitz operator Banach ideal $A$ is \emph{strong} if
whenever $X,Z$ are pointed metric spaces, $E$ and $F$ are Banach spaces,
$f \in A(X, E)$, $h \in \Lipo(Z, X)$ and $g \in \Lipo(E, F)$, then the composition $g f h$ belongs
to $A(Z, F)$ and $\left\|gfh\right\|_A \le \Lip(g)\left\|f\right\|_A\Lip(h)$.
The ideals $\Lipo$, $\Pi_p^L$ and $\Gamma_p^L$ are examples of strong generic Lipschitz Banach ideals.
The following proposition characterizes strong generic Lipschitz operator Banach ideals.

\begin{proposition}\label{prop-characterization-strong-ideals}
Let $A$ be a generic Lipschitz operator Banach ideal. Then $A$ is strong if and only if for every pointed metric space $X$, every Banach space $E$ and every map $f \in \Lipo(X,E)$, $f \in A(X,E)$ if and only if $\delta_E \circ f \in A(X,\F(E))$, and with $\| f \|_A = \|\delta_E \circ f\|_A$.
\end{proposition}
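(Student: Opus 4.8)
The plan is to exploit the universal (linearization) property of the Lipschitz-free space together with the barycentric map $\beta_E$; together these let us convert Lipschitz maps between Banach spaces into bounded linear operators between the corresponding free spaces, thereby reducing the strong ideal property to the ordinary one from Definition \ref{defn-generic-operator ideal}.(iv). Throughout I use that $\delta_E$ is an isometric embedding, so $\Lip(\delta_E)=1$, and that for any base-point preserving Lipschitz map $g\colon E\to F$ there is a unique bounded linear operator $\widehat{g}\colon \F(E)\to\F(F)$ with $\widehat{g}\circ\delta_E=\delta_F\circ g$ and $\left\|\widehat{g}\right\|=\Lip(\delta_F\circ g)\le\Lip(g)$, provided by the universal property of $\F(E)$.

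For the forward implication, assume $A$ is strong and fix $f\in\Lipo(X,E)$. If $f\in A(X,E)$, then applying the strong ideal property with $g=\delta_E$ and $h=id_X$ gives $\delta_E\circ f\in A(X,\F(E))$ with $\left\|\delta_E\circ f\right\|_A\le\left\|f\right\|_A$. Conversely, since $\beta_E\in\L(\F(E),E)$ has norm one and $\beta_E\circ\delta_E=id_E$, the ordinary ideal property applied to $f=\beta_E\circ(\delta_E\circ f)$ shows that $\delta_E\circ f\in A(X,\F(E))$ forces $f\in A(X,E)$ with $\left\|f\right\|_A\le\left\|\delta_E\circ f\right\|_A$; note that this half uses only that $A$ is a generic Lipschitz operator Banach ideal. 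Combining the two estimates yields both the equivalence and the norm equality $\left\|f\right\|_A=\left\|\delta_E\circ f\right\|_A$.

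For the converse, assume the stated characterization holds and fix $f\in A(X,E)$, $h\in\Lipo(Z,X)$ and $g\in\Lipo(E,F)$. By the characterization, $\delta_E\circ f\in A(X,\F(E))$ with $\left\|\delta_E\circ f\right\|_A=\left\|f\right\|_A$. Let $\widehat{g}\colon\F(E)\to\F(F)$ be the linearization of $\delta_F\circ g$ described above, so $\widehat{g}\in\L(\F(E),\F(F))$ with $\left\|\widehat{g}\right\|\le\Lip(g)$. The ordinary ideal property then gives $\widehat{g}\circ(\delta_E\circ f)\circ h\in A(Z,\F(F))$ with
$$
\bigl\|\widehat{g}\circ(\delta_E\circ f)\circ h\bigr\|_A\le\left\|\widehat{g}\right\|\,\left\|\delta_E\circ f\right\|_A\,\Lip(h)\le\Lip(g)\,\left\|f\right\|_A\,\Lip(h).
$$
Since $\widehat{g}\circ\delta_E=\delta_F\circ g$, this composition equals $\delta_F\circ(gfh)$, so applying the characterization once more (in the direction ``$\delta_F\circ(gfh)\in A\Rightarrow gfh\in A$'') yields $gfh\in A(Z,F)$ with $\left\|gfh\right\|_A=\left\|\delta_F\circ(gfh)\right\|_A\le\Lip(g)\left\|f\right\|_A\Lip(h)$, which is exactly the strong ideal property.

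The main point to get right is the construction of $\widehat{g}$: the ordinary ideal property only permits a \emph{linear} operator in the target slot, so the whole argument hinges on replacing the genuinely nonlinear $g$ by the linear operator $\widehat{g}$ at the level of free spaces, with the barycentric map $\beta_E$ providing the inverse passage from $\F(E)$ back to $E$. Everything else is routine composition bookkeeping together with the observation that one implication of the characterization never uses strength of $A$.
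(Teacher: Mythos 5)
Your proof is correct and follows essentially the same route as the paper: the forward direction uses the strong property to handle the nonlinear $\delta_E$ and the barycentric map $\beta_E$ (via the ordinary ideal property) for the reverse inclusion, and the converse direction reduces the nonlinear outer map $g$ to Kalton's linearization $\widehat{g}\colon\F(E)\to\F(F)$ satisfying $\widehat{g}\circ\delta_E=\delta_F\circ g$, exactly as in the paper. The only cosmetic differences are that you invoke the universal property of $\F(E)$ directly rather than citing \cite[Lemma 3.1]{k04}, and you apply the ideal property once to the composition $\widehat{g}\circ(\delta_E\circ f)\circ h$ where the paper applies it in two steps.
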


\begin{proof}
Suppose that $A$ is strong. Let $X$, $E$ and $f$ be as above.
If $f \in A(X,E)$, then by the ideal property $\delta_E \circ f \in A(X,\F(E))$ and 
$$
\|\ \delta_E \circ f\|_A	\le \Lip(\delta_E)\|f\|_A = \|f\|_A = \| \beta_E \circ \delta_E \circ f \|_A \le \Lip(\beta_E) \|\ \delta_E \circ f\|_A = \|\ \delta_E \circ f\|_A,
$$
so $\| f \|_A = \|\delta_E \circ f\|_A$. The same chain of inequalities shows that if $\delta_E \circ f$ is in $A$, so is $f$ and with the same norm.

For the converse implication, let  $X,Z$ be pointed metric spaces, $E$ and $F$ be Banach spaces,
$f \in A(X, E)$, $h \in \Lipo(Z, X)$ and $g \in \Lipo(E, F)$.
By \cite[Lemma 3.1]{k04}, there exists a unique bounded linear operator $\widehat{g} \colon  \F(E) \to\F(F)$ such that $\widehat{g} \circ \delta_E = \delta_F \circ g$. Furthermore, $\left\|\widehat{g}\right\| = \Lip(g)$.
Since $f \in A(X,E)$, then by hypothesis $\delta_E \circ f \in A(X, \F(E))$ and thus by the ideal property $\delta_E \circ f \circ h \in A(Z, \F(E))$.
Using the ideal property of generic Lipschitz operator Banach ideals again, we have that $ \widehat{g} \circ \delta_E \circ f \circ h = \delta_F \circ g \circ f \circ h \in A(Z,\F(F))$.
By the hypothesis, we get that $g \circ f \circ h \in A(Z,F)$. Moreover,
$$
\left\| g \circ f \circ h \right\|_A = 
\left\| \delta_F \circ g \circ f \circ h \right\|_A = 
\left\| \widehat{g} \circ \delta_E \circ f \circ h \right\|_A \le 
\| \widehat{g} \| \left\| \delta_E \circ f \circ h  \right\|_A =
\Lip(g) \left\| f \circ h \right\|_A \le 
\Lip(g) \left\| f \right\|_A \Lip(h). 
$$
\end{proof}

In the next result we define an extension of the notion of Lipschitz operator Banach ideal, now having a metric space as a codomain for the maps.
The arguments are almost the same as those used to prove Proposition \ref{prop-characterization-strong-ideals}.

\begin{proposition}\label{prop-ideals-for-metric-spaces}
Let $A$ be a strong generic Lipschitz operator Banach ideal.
For any pointed metric spaces $X$ and $Y$ and $f \in \Lipo(X,Y)$, define $f \in \tilde{A}(X,Y)$ if and only if $\delta_Y \circ f \in A(X,\F(Y))$, and denote $\left\|f\right\|_{\tilde{A}} = \left\|\delta_Y \circ f\right\|_A$.
\begin{enumerate}[(i)]
 \item For any pointed metric space $X$ and any Banach space $E$, $f \in A(X,E)$ if and only if $f \in \tilde{A}(X,E)$, and moreover $\left\|f\right\|_{\tilde{A}} = \left\|f\right\|_A$.
 \item If $f \in \tilde{A}(X, Y)$, $h \in \Lipo(W, X)$ and $g \in \Lipo(Y, Z)$, then the composition $g \circ f \circ h$ belongs
to $\tilde{A}(W, Z)$ and $\left\| g\circ f\circ h \right\|_{\tilde{A}} \le \Lip(g) \left\|f\right\|_{\tilde{A}} \Lip(h)$.
\end{enumerate}
\end{proposition}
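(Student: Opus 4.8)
The plan is to deduce both parts almost immediately from Proposition \ref{prop-characterization-strong-ideals} together with the functoriality of the Lipschitz-free space construction, so that essentially all the work has already been done. First I would dispose of part (i): for a Banach space $E$ the definition of $\tilde{A}$ reads $f \in \tilde{A}(X,E)$ if and only if $\delta_E \circ f \in A(X,\F(E))$, with $\left\|f\right\|_{\tilde{A}} = \left\|\delta_E \circ f\right\|_A$. Since $A$ is strong, Proposition \ref{prop-characterization-strong-ideals} asserts precisely that $f \in A(X,E)$ if and only if $\delta_E \circ f \in A(X,\F(E))$ and $\left\|f\right\|_A = \left\|\delta_E \circ f\right\|_A$. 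Comparing the two statements yields (i) at once, with no further argument.

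For part (ii) the key tool is the functorial extension of Lipschitz maps to free spaces. Given $g \in \Lipo(Y,Z)$ between pointed metric spaces, \cite[Lemma 3.1]{k04} provides a unique bounded linear operator $\widehat{g} \colon \F(Y) \to \F(Z)$ satisfying $\widehat{g} \circ \delta_Y = \delta_Z \circ g$ and $\left\|\widehat{g}\right\| = \Lip(g)$. The first step is to record the resulting identity
$$
\delta_Z \circ (g \circ f \circ h) = \widehat{g} \circ \delta_Y \circ f \circ h ,
$$
which rewrites the free-space lift of the composition in terms of $\delta_Y \circ f$. By hypothesis $\delta_Y \circ f \in A(X,\F(Y))$ with $\left\|\delta_Y \circ f\right\|_A = \left\|f\right\|_{\tilde{A}}$, so the second step is simply to apply the (strengthened) ideal property from Definition \ref{defn-generic-operator ideal}, with the Lipschitz map $h \in \Lipo(W,X)$ composed on the right and the bounded linear operator $\widehat{g} \in \L(\F(Y),\F(Z))$ composed on the left. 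This gives
$$
\widehat{g} \circ (\delta_Y \circ f) \circ h \in A(W,\F(Z)), \qquad \left\|\widehat{g} \circ (\delta_Y \circ f) \circ h\right\|_A \le \left\|\widehat{g}\right\| \left\|\delta_Y \circ f\right\|_A \Lip(h).
$$
Combining this with the identity above and with $\left\|\widehat{g}\right\| = \Lip(g)$, I conclude that $\delta_Z \circ (g \circ f \circ h) \in A(W,\F(Z))$, which by the definition of $\tilde{A}$ means $g \circ f \circ h \in \tilde{A}(W,Z)$ and $\left\|g \circ f \circ h\right\|_{\tilde{A}} \le \Lip(g)\left\|f\right\|_{\tilde{A}}\Lip(h)$.

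There is no genuine obstacle here, since the proposition is designed to be a cosmetic repackaging of the previous one. The only point requiring care is that in Proposition \ref{prop-characterization-strong-ideals} the free-space functoriality was invoked for a map $g$ between \emph{Banach} spaces, whereas here $g$ runs between arbitrary pointed metric spaces; I would therefore note explicitly that \cite[Lemma 3.1]{k04} (the existence of $\widehat{g}$ with $\widehat{g}\circ\delta_Y=\delta_Z\circ g$ and $\left\|\widehat{g}\right\|=\Lip(g)$) holds verbatim in the metric-space setting. Everything else is bookkeeping around the identity $\delta_Z\circ g=\widehat{g}\circ\delta_Y$ and the ideal property already built into the definition of a generic Lipschitz operator Banach ideal.
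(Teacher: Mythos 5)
Your proposal is correct and takes essentially the same approach as the paper: part (ii) is verbatim the paper's argument, lifting $g$ to the linear operator $\widehat{g}\colon \F(Y)\to\F(Z)$ via \cite[Lemma 3.1]{k04} and applying the strengthened ideal property to $\delta_Y\circ f$, and part (i) is exactly the characterization in Proposition \ref{prop-characterization-strong-ideals} specialized to the strong ideal $A$. The paper merely re-runs that short argument (composing with $\delta_E$ and then with the barycentric map $\beta_E$) instead of citing the proposition, which is an inessential difference, and your explicit remark that the free-space lifting lemma holds for maps between arbitrary pointed metric spaces is a point the paper uses tacitly.
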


\begin{proof}
\begin{enumerate}[(i)]
\item
If $f \in A(X,E)$, then $\delta_E \circ f \in A(X,\F(E))$ and $\left\|\delta_E \circ f\right\|_A \le \Lip(\delta_E) \left\|f\right\|_A = \left\|f\right\|_A$ by the ideal property.
Now assume that $\delta_E \circ f \in A(X,\F(E))$.
Note that $\beta_E \circ \delta_E \circ f = f$,
so, by the ideal property, $f \in A(X,E)$ and
$$
\left\|f\right\|_A
= \left\| \beta_E \circ \delta_E \circ f \right\|_A
\le \left\|\beta_E\right\| \left\| \delta_E \circ f \right\|_A
 = \left\|f\right\|_{\tilde{A}}.
$$
\item
By \cite[Lemma 3.1]{k04}, there exists a unique bounded linear operator $\widehat{g} \colon  \F(Y) \to\F(Z)$ such that $\widehat{g} \circ \delta_Y = \delta_Z \circ g$. Furthermore, $\left\|\widehat{g}\right\| = \Lip(g)$. By the ideal property, $\delta_Z \circ g \circ f \circ h\in A(W,\F(Z))$ and 
$$
\left\| g \circ f \circ h \right\|_{\tilde{A}}
=\left\| \delta_Z \circ ( g f) \circ h \right\|_A
=\left\| \widehat{g} \circ (\delta_Y \circ f) \circ h \right\|_A
\le \left\|\widehat{g}\right\|\left\|\delta_Y \circ f\right\|_A  \Lip(h)
= \Lip(g) \left\|f\right\|_{\tilde{A}} \Lip(h).
$$
\end{enumerate}
\end{proof}

In an abuse of notation, given a strong generic Lipschitz operator Banach ideal, we will still denote by $A$ its extension to metric spaces (instead of $\tilde{A}$). Note that even though we keep the notation $\left\|f\right\|_A$, when we leave the Banach space context this is no longer a norm since we lose the vector space structure. Nevertheless, it still denotes a quantitative property of the map $f$.

The following result is interesting because it characterizes a nonlinear property in terms of a linear one, closely related to \cite[Theorem 4.6]{cd11} and \cite[Theorem 4.4]{CD-Lipschitz-mixing}. Of course, as it always happens in this kind of situation, we have simplified the mapping but made the spaces more complicated.
Compare to \cite[Theorem 17.15]{Defant-Floret}, 

\begin{theorem}\label{thm-linear-characterization-of-nonlinear}
Let $A$ be a strong and maximal generic Lipschitz operator Banach ideal, and $\alpha$ the finitely generated generic Lipschitz cross-norm which is associated to $A$.
For any pointed metric spaces $X$ and $Y$, and $f \in \Lipo(X,Y)$, the following are equivalent:
\begin{enumerate}[(i)]
 \item $f \in A(X,Y)$.
 \item For all Banach spaces $G$ (or only $G = Y^\#$), $ f \boxtimes id_G \colon  X\widehat{\boxtimes}_\alpha G \to Y\widehat{\boxtimes}_\pi G$ is continuous.
\end{enumerate}
In this case,
$$
\left\|f\right\|_A = \left\| f \boxtimes id_{Y^\#} \colon  X \widehat{\boxtimes}_\alpha Y^\# \to Y \widehat{\boxtimes}_\pi Y^\# \right\| \ge \left\| f \boxtimes id_G \colon  X\widehat{\boxtimes}_\alpha G \to Y\widehat{\boxtimes}_\pi G \right\|.
$$
\end{theorem}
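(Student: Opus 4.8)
The plan is to reduce the whole statement to the single norm identity
$$
\n{f \boxtimes id_G \colon X\widehat{\boxtimes}_\alpha G \to Y\widehat{\boxtimes}_\pi G}
= \sup\left\{ \n{\psi \circ f}_A \colon \psi \in \Lipo(Y,G^*),\ \Lip(\psi)\le 1\right\},
$$
valid for every Banach space $G$, and then to read off (i)$\Leftrightarrow$(ii) and the two norm relations by choosing $G$ appropriately. First I would prove this identity. Since $X\boxtimes G$ is dense in $X\widehat{\boxtimes}_\alpha G$, the operator norm equals the supremum of $\pi\big((f\boxtimes id_G)(u)\big)$ over $u=\sum_i\delta_{(x_i,x_i')}\boxtimes g_i\in X\boxtimes G$ with $\alpha(u)\le 1$. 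Using Corollary \ref{01} (with $E=G$) to identify $(Y\widehat{\boxtimes}_\pi G)^*=\Lipo(Y,G^*)$ isometrically, I compute the projective norm of the image by duality: for $\psi\in\Lipo(Y,G^*)$ with $\Lip(\psi)\le 1$,
$$
\pair{\psi}{(f\boxtimes id_G)(u)}
=\sum_i\pair{\psi(f(x_i))-\psi(f(x_i'))}{g_i}
=(\psi\circ f)(u),
$$
where the last expression is $u$ viewed as a functional on $\Lipo(X,G^*)$ and evaluated at $\psi\circ f$. Interchanging the two suprema and using Remark \ref{remark-def-cross-norm-Lipschitz operator} gives $\sup_{\alpha(u)\le 1}|(\psi\circ f)(u)|=\Lipa(\psi\circ f)$. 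Finally, since $A$ and $\alpha$ are associated and $A$ is maximal, the representation theorem (relation \eqref{eqn-representation-thm-1}) together with Theorem \ref{teo-dual} yields $A(X,G^*)=\Lipa(X,G^*)$ isometrically, so $\Lipa(\psi\circ f)=\n{\psi\circ f}_A$, proving the identity.

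Given the identity, the implication (i)$\Rightarrow$(ii) for every $G$ is immediate: if $f\in A(X,Y)$ then, because $A$ is strong, Proposition \ref{prop-ideals-for-metric-spaces} (applied with the Lipschitz map $\psi\colon Y\to G^*$ on the left and the identity on $X$) gives $\psi\circ f\in A(X,G^*)$ with $\n{\psi\circ f}_A\le\Lip(\psi)\n{f}_A\le\n{f}_A$, whence $f\boxtimes id_G$ is continuous with $\n{f\boxtimes id_G}\le\n{f}_A$. In particular this holds for $G=Y^\#$.

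The heart of the argument is the reverse implication (ii) for $G=Y^\#$ $\Rightarrow$ (i) with the sharp bound. Here I exploit that $\F(Y)^*=Y^\#$, so $(Y^\#)^*=\F(Y)^{**}$, and I specialize the identity to the distinguished Lipschitz map $\psi:=\kappa_{\F(Y)}\circ\delta_Y\colon Y\to\F(Y)^{**}=(Y^\#)^*$, which satisfies $\Lip(\psi)\le 1$. If $f\boxtimes id_{Y^\#}$ is continuous, the identity gives $\n{\psi\circ f}_A=\n{\kappa_{\F(Y)}\circ(\delta_Y\circ f)}_A\le\n{f\boxtimes id_{Y^\#}}<\infty$, so $\psi\circ f\in A(X,(Y^\#)^*)$. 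Now I invoke regularity of the maximal ideal $A$ (Corollary \ref{cor-maximal-implies-regular}) for the map $\delta_Y\circ f\colon X\to\F(Y)$:
$$
\n{\delta_Y\circ f}_A=\n{\kappa_{\F(Y)}\circ(\delta_Y\circ f)}_A=\n{\psi\circ f}_A,
$$
and by the definition of the extension of $A$ to metric-space targets (Proposition \ref{prop-ideals-for-metric-spaces}) this means $f\in A(X,Y)$ with $\n{f}_A=\n{\delta_Y\circ f}_A\le\n{f\boxtimes id_{Y^\#}}$.

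It remains to assemble the statement. The two estimates just obtained give $\n{f\boxtimes id_{Y^\#}}\le\n{f}_A\le\n{f\boxtimes id_{Y^\#}}$, hence $\n{f}_A=\n{f\boxtimes id_{Y^\#}}$; and the bound from the (i)$\Rightarrow$(ii) step, applied to an arbitrary $G$, gives $\n{f\boxtimes id_G}\le\n{f}_A$, which is the asserted inequality. The chain (i)$\Rightarrow$(ii for all $G$)$\Rightarrow$(ii for $G=Y^\#$)$\Rightarrow$(i) closes the equivalence. I expect the main obstacle to be the bookkeeping in the reverse direction: identifying $(Y^\#)^*$ with $\F(Y)^{**}$, recognizing that $\kappa_{\F(Y)}\circ\delta_Y$ is exactly the functional that must be fed into the duality identity when $G=Y^\#$, and using regularity---which is precisely where maximality is essential---to carry the norm from $\F(Y)^{**}$ back down to the metric-valued ideal quantity $\n{f}_A$.
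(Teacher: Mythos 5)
Your proof is correct and follows essentially the same route as the paper's: the implication (i)$\Rightarrow$(ii) comes from the composition operator $h\mapsto h\circ f$ bounded by the strong ideal property, and (ii)$\Rightarrow$(i) comes from testing $(f\boxtimes id_{Y^\#})u$ against the distinguished functional $\kappa_{\F(Y)}\circ\delta_Y\in\Lipo\big(Y,(Y^\#)^*\big)$, with the representation theorem converting functional bounds into membership in $A$. The only organizational difference is that where the paper identifies $u(\kappa_{\F(Y)}\circ\delta_Y\circ f)$ with $u(\delta_Y\circ f)$ directly (treating the canonical maps as inclusions) and applies relation \eqref{eqn-representation-thm-2} with $E=\F(Y)$, you instead apply relation \eqref{eqn-representation-thm-1} with $E=Y^\#$ and then descend from $\F(Y)^{**}$ to $\F(Y)$ via Corollary \ref{cor-maximal-implies-regular}; since regularity is itself a consequence of the representation theorem, this is the same content, just packaged more explicitly.
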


\begin{proof}
Suppose that $f \in A(X,Y)$, and let $G$ be a Banach space. The boundedness of
$$
f \boxtimes id_G \colon  X\widehat{\boxtimes}_\alpha G \to Y\widehat{\boxtimes}_\pi G
$$
will follow from the boundedness of the adjoint map
$$
(f \boxtimes id_G)^* \colon  (Y\widehat{\boxtimes}_\pi G)^* = \Lipo(Y,G^*) \to (X\widehat{\boxtimes}_\alpha G)^* = A(X,G^*).
$$
Now, for $v\in G$, $x\in X$ and $h\in\Lipo(Y,G^*)$,
$$
\pair{[(f \boxtimes id_G)^*h](x)}{v}
= \big( \delta_{(x,0)} \boxtimes v \big) \big( (f \boxtimes id_G)^*h \big)
= \big( (f \boxtimes id_G)\big[\delta_{(x,0)} \boxtimes v \big] \big) \big( h \big)
= \big( \delta_{(f(x),0)} \boxtimes v \big) \big( h \big)
= \pair{h(f(x))}{v}.
$$
Therefore $(f \boxtimes id_G)^*$ is
given by $h \in \Lipo(Y,G^*) \mapsto h \circ f \in A(X,G^*)$,
which has norm at most $\left\|f\right\|_A$ because of the ideal property.

Now suppose that $f \boxtimes id_{Y^\#} \colon  X\widehat{\boxtimes}_\alpha Y^\# \to Y\widehat{\boxtimes}_\pi Y^\#$ has norm $c$.
By definition, $f \in A(X,Y)$ if and only if $\delta_Y \circ f \in A(X,\F(Y))$ and with the same norm,
which by the representation theorem is equivalent to having $\delta_Y \circ f$ define an element of $(X \widehat{\boxtimes}_\alpha \F(Y)^*)^* = (X\widehat{\boxtimes}_\alpha Y^{\#})^*$.
Therefore, we seek to prove that given $u \in X\widehat{\boxtimes} Y^\#$,
$|u(\delta_Y \circ f)| \le c \alpha(u)$.
Note that for a
given $u \in X\widehat{\boxtimes} Y^\#$, 
$u(f \boxtimes id_{Y^\#})$ belongs to $Y\widehat{\boxtimes} Y^\#$.
Since $\kappa_{\F(Y)} \circ \delta_Y : Y \to Y^{\#*}$, we may consider
$
\big[ (f \boxtimes id_{Y^\#})u \big](\kappa_{\F(Y)} \circ \delta_Y).
$
Note that this is in fact just $u(f)$, since the maps $\delta_Y$, $\kappa_{\F(Y)}$ and $id_{Y^\#}$ are inclusions.
Therefore,
$$
|u(f)| = \big|\big[ (f \boxtimes id_{Y^\#})u \big](\kappa_{\F(Y)} \circ \delta_Y)\big|
\le \Lip( \kappa_{\F(Y)} \circ \delta_Y ) \pi((f \boxtimes id_{Y^\#})u )
\le c \alpha(u)
$$
and the conclusion follows.
\end{proof}

\begin{remark}
 It should be pointed out that in Theorem \ref{thm-linear-characterization-of-nonlinear}, small adaptations of the proof show that when the codomain is a Banach space $E$ (respectively, $F^*$) in part (ii) it suffices to consider $G=E^*$ (respectively, $G=F$).
\end{remark}

\bibliographystyle{amsplain}

\end{document}